\newcommand{\Oplus}{\ensuremath{\vcenter{\hbox{\scalebox{1.5}{$\oplus$}}}}}
\newcounter{mnote}
\let\oldmarginpar\marginpar
\renewcommand\marginpar[1]{\-\oldmarginpar[\raggedleft\footnotesize #1]%
	{\raggedright\footnotesize #1}}
\providecommand{\keywords}[1]
{
\small
\textbf{Keywords:}
}
\newtheorem{theorem}{Theorem}[section]
\newtheorem{lemma}[theorem]{Lemma}
\newtheorem{remark}[theorem]{Remark}
\NewDocumentCommand{\xrightleftarrow}{ O{}O{} }{%
	\mathrel{%
		\vcenter{
			\hbox{%
				\begin{tikzpicture}
					\node[minimum width=0.75cm,align=center] (a){\text{\vphantom{hg}#1}\\[0.3ex]\vphantom{hg}#2};
					\draw[-stealth] ([yshift=0.4ex]a.west) -- ([yshift=0.4ex]a.east);
					\draw[stealth-] ([yshift=-0.4ex]a.west) -- ([yshift=-0.4ex]a.east);
				\end{tikzpicture}
			}
		}%
	}\!\!%
}
\newcommand{\curl}{\operatorname{curl}}
\renewcommand{\div}{\operatorname{div}}
\newcommand{\grad}{\operatorname{grad}}
\newcommand{\tr}{\operatorname{tr}}
\newcommand{\sym}{\operatorname{sym}}
\newcommand{\skw}{\operatorname{skw}}
\newcommand{\air}{\operatorname{airy}}
\numberwithin{equation}{section}
\begin{document}
	\title[Low-order FEM for linear elasticity]{New low-order mixed finite element methods for linear elasticity}
	\author{Xuehai Huang}%
	\address{School of Mathematics, Shanghai University of Finance and Economics, Shanghai 200433, China}%
	\email{huang.xuehai@sufe.edu.cn}%
	\author{Chao Zhang}%
	\address{School of Mathematics, Shanghai University of Finance and Economics, Shanghai 200433, China}%
	\email{zcmath@163.sufe.edu.cn}%
	\author{Yaqian Zhou}%
	\address{School of Mathematics, Shanghai University of Finance and Economics, Shanghai 200433, China}%
	\email{cxmvxbzhou@gmail.com}%
	\author{Yangxing Zhu}%
	\address{Network and Information Technology Center, Shanghai University of Finance and Economics, Shanghai 200433, China}%
	\email{zhu.yangxing@sufe.edu.cn}%
	
	\thanks{The first author was supported by the National Natural Science Foundation of China Project 12171300, and the Natural Science Foundation of Shanghai 21ZR1480500.}
	
\makeatletter
\@namedef{subjclassname@2020}{\textup{2020} Mathematics Subject Classification}
\makeatother
\subjclass[2020]{
58J10;   
65N12;   
65N22;   
65N30;   
}

\begin{abstract}
New low-order ${H}(\div)$-conforming finite elements for symmetric tensors are constructed in arbitrary dimension. The space of shape functions is defined by enriching the symmetric quadratic polynomial space with the $(d+1)$-order normal-normal face bubble space. The reduced counterpart has only $d(d+1)^2$ degrees of freedom. Basis functions are explicitly given in terms of barycentric coordinates. Low-order conforming finite element elasticity complexes starting from the Bell element, are developed in two dimensions.
These finite elements for symmetric tensors are applied to devise robust mixed finite element methods for the linear elasticity problem, which possess the uniform error estimates with respect to the Lam\'{e} coefficient $\lambda$, and superconvergence for the displacement. Numerical results are provided to verify the theoretical convergence rates. 
\end{abstract}


	\maketitle
	
	
\small{	Keywords: finite element elasticity complex, low-order finite elements for symmetric tensors, mixed finite element method,  linear elasticity problem,  error analysis}
	
	\section{Introduction}
Based on the  Hellinger-Reissner variational principle, the mixed formulation of the linear elasticity employs the spaces ${H}(\div,\Omega; \mathbb{S})$ and ${L}^2(\Omega; \mathbb{R}^d)$ for the stress tensor and the displacement respectively.
However, it is difficult to develop stable mixed finite element methods using pure polynomials as shape functions for the symmetric stress tensor. One way to overcome this difficulty is to adopt composite elements~\cite{claes1978,ArnoldJimGupta1984}. Another approach is to enforce
the symmetry of the stress weakly by introducing a Lagrange multiplier \cite{AmaraThomas1979,ArnoldRichardRagnar2007Mixed,FortinBrezziBoffi2017Reduced,BernardoGopalakrishnanGuzman2010,FarhloulFortin997,GopalakrishnanGuzman2012,QiuDemkowicz2009}.
	
The first ${H}(\div)$-conforming finite element in two dimensions for symmetric tensors with pure polynomials as shape functions has been proposed in \cite{ArnoldWinthe2002}, which has been extended to higher dimensions in \cite{HuZhang2016lower,ArnoldAwanouWinther2007,AdamsCockburn2005}.
For this family of finite elements, the shape function is a $\mathbb{P}_{k+d-1}$ symmetric tensor whose divergence is a $\mathbb{P}_{k-1}$ vector on each simplex, where $d$ is the dimension of the geometric domain.
Hu and Zhang \cite{HuZhang2015,HuZhang2014A,Hu2015Higher} used the $k$th order polynomials as the shape functions to construct ${H}(\div)$-conforming finite elements for symmetric tensors with constraint $k\geq d+1$. By moving all the tangential-normal degrees of freedom (DoFs) of the Hu-Zhang element to the faces, Chen and Huang \cite{ChenHuang2022} designed a different ${H}(\div)$-conforming finite element for symmetric tensors, in which the constraint $k\geq d+1$ is also required.
On rectangular grids, we refer to \cite{ArnoldAwanou2005,Hu2015,HuManZhang2014,Awanou2012,ChenWang2011} for ${H}(\div)$-conforming finite element for symmetric tensors. 
The normal-normal continuous finite elements for symmetric tensors haven been proposed in \cite{ChenHuang2023,Hellan1967,Herrmann1967,Johnson1973,Sinwel2009,PechsteinSchoeberl2011}, where the
tangential-normal continuity is imposed weakly through the discrete bilinear form.
In addition, nonconforming finite elements for symmetric tensors were designed in \cite{ArnoldWinther2003,Awanou2009,ArnoldAwanouWinther2014,GopalakrishnanGuzman2011,XieXu2011,WuGongXu2017Interior,HuShi2007,ManHuShi2009,Yi2005,Yi2006}.

Many low-order mixed finite element methods with exactly symmetric stress have been devised for linear elasticity \cite{HuZhang2016lower,HuShi2007,ManHuShi2009,HuMaSun2023,CaiYe2005,ChenHuHuang2017}.
In three dimensions, when the stress is approximated by piecewise cubic polynomials and the displacement by discontinuous piecewise quadratic polynomials, the discrete inf-sup condition  was proved in \cite{HuMaSun2023} under some assumption on the mesh.
By removing the supersmooth DoFs of the finite elements in \cite{HuZhang2015,HuZhang2014A,Hu2015Higher}, a hybridized mixed method for linear elasticity was developed in~\cite{GongWuXu2019} for $k\geq d-1$ with $d=2,3$, where stability holds for $d-1\leq k\leq d$ on some special grids.

In this paper,  we shall construct low-order $H(\div)$-conforming finite elements for symmetric tensors in arbitrary dimension. By the DoFs (4.10)-(4.13) in \cite{ChenHuang2022}, the constraint $k\geq2$ is sufficient to ensure that $\div\mathbb P_k(K;\mathbb S)$ covers $\mathbb P_{k-1}(K;\mathbb R^d)/\mathbf{RM}$ and on face $F$ the tangential-normal part of $\mathbb P_k(K;\mathbb S)$ covers the tangential part of $\mathbf{RM}$, where $\mathbf{RM}$ is the space of rigid motions. The constraint $k\geq d+1$ is caused by the normal part of $\mathbf{RM}$ and the supersmooth on lower dimensional faces induced by the symmetry. To this end, we enrich $\mathbb P_{2}(K;\mathbb S)$ with the $(d+1)$-order normal-normal face bubble space $\mathbb B_{\partial K}^{nn}\subset\mathbb P_{d+1}(K;\mathbb S)$ to define the shape function space
$
\boldsymbol{\Sigma}(K):=\mathbb P_{2}(K;\mathbb S)\oplus\mathbb B_{\partial K}^{nn}.
$
The DoFs are chosen in the consideration of both $\mathbb P_{2}(K;\mathbb S)$ and $\mathbb B_{\partial K}^{nn}$:
\begin{align*}
\boldsymbol \tau (\texttt{v}) & \quad\forall~\texttt{v}\in \Delta_{0}(K), \\
(\boldsymbol  n_i^{\intercal}\boldsymbol \tau\boldsymbol n_j, q)_e & \quad\forall~q\in\mathbb P_{0}(e),  e\in\Delta_{1}(K), 1\leq i\leq j\leq d-1,\;  \\
(\boldsymbol  n^{\intercal}\boldsymbol \tau\boldsymbol n, q)_F & \quad\forall~q\in\mathbb P_{1}(F),  F\in\partial K,\;  \\
(\Pi_F\boldsymbol \tau\boldsymbol n, \boldsymbol q)_F & \quad\forall~\boldsymbol q\in {\rm ND}_{0}(F),  F\in\partial K, \\
 (\boldsymbol \tau, \boldsymbol q)_K &\;\;\;\,\forall~\boldsymbol q\in \mathbb P_{0}(K;\mathbb S),
\end{align*}
where ${\rm ND}_{0}(F)$ is the rigid motion space on face $F$.

To lower the dimension of the shape function space furthermore, we take a subspace $\boldsymbol{\Sigma}^r(K)$ of $\boldsymbol{\Sigma}(K)$, which consists of all the symmetric tensors in $\boldsymbol{\Sigma}(K)$ satisfying that: (i) its divergence belongs to $\mathbf{RM}$; (ii) the normal-normal part restricted to each edge is linear. The DoFs are listed as follows:
\begin{align*}
\boldsymbol \tau (\texttt{v}) & \quad\forall~\texttt{v}\in \Delta_{0}(K), \\
(\boldsymbol  n^{\intercal}\boldsymbol \tau\boldsymbol n, q)_F & \quad\forall~q\in\mathbb P_{1}(F),  F\in\partial K,\;  \\
(\Pi_F\boldsymbol \tau\boldsymbol n, \boldsymbol q)_F & \quad\forall~\boldsymbol q\in {\rm ND}_{0}(F),  F\in\partial K.
\end{align*}
The dimension of the reduced space $\boldsymbol{\Sigma}^r(K)$ is only $d(d+1)^2$.
The local dimension of the first-order $H(\div)$-conforming finite element for symmetric tensors in \cite{HuZhang2016lower} is also $d(d+1)^2$. Their finite element space is defined by enriching the symmetric tensor-valued linear element space with both the $(d+1)$-order normal-normal face bubble space and the $(d+1)$-order tangential-normal face bubble space, while our finite element space is enriched by the $(d+1)$-order normal-normal face bubble space and the second order tangential-normal face bubble space. Thereby, our reduced finite element for symmetric tensors is simpler than the first order one in \cite{HuZhang2016lower} in the sense that on each face $F\in\partial K$, $(\Pi_F\boldsymbol \tau\boldsymbol n)|_F\in\mathbb P_2(F;\mathbb R^{d-1})$ for $\boldsymbol{\tau}\in\boldsymbol{\Sigma}^r(K)$, while $(\Pi_F\boldsymbol \tau\boldsymbol n)|_F\in\mathbb P_{d+1}(F;\mathbb R^{d-1})$ for the first order symmetric tensor $\boldsymbol \tau$ in \cite{HuZhang2016lower}.
We present the explicit expressions of the basis functions of $\boldsymbol{\Sigma}(K)$ and $\boldsymbol{\Sigma}^r(K)$ in terms of barycentric coordinates.

We also present low-order conforming finite element discretizations of the
elasticity complex in two dimensions \cite{Eastwood2000}
	\begin{align}\label{stresscomplex}
		\mathbb{P}_1(\Omega)\xrightarrow{\subset}H^2(\Omega) \xrightarrow{\air} H(\div,\Omega; \mathbb{S}) \xrightarrow{\div}L^2(\Omega; \mathbb{R}^2) \rightarrow 0,
	\end{align}
	where $\air$ is the Airy operator.
The Bell element \cite{Bell1969} is adopted to discretize the space $H^2(\Omega)$, the space $H(\div,\Omega; \mathbb{S})$ is discretized by the global version of $\boldsymbol{\Sigma}(K)$ or $\boldsymbol{\Sigma}^r(K)$, and the space  $L^2(\Omega; \mathbb{R}^2)$ is discretized by the piecewise linear element or the piecewise rigid motion.

With the previously constructed ${H}(\div; \mathbb{S})$-conforming finite elements for stress and the piecewise linear element space or the piecewise rigid motion space for displacement, we propose robust mixed finite element methods for the linear elasticity problem. After establishing two discrete inf-sup conditions in different norms and the interpolation error estimates, we achieve the optimal convergence for stress and some superconvergent estimates for displacement, all of which are robust with respect to the Lam\'{e} coefficient $\lambda$.

The rest of this paper is organized as follows. Low-order finite element elasticity complexes in two dimensions are developed in Section~\ref{sec:femelascomplex}. 
Low-order ${H}(\div; \mathbb{S})$-conforming finite elements in higher dimensions are constructed in Section~\ref{sec:divSfem}.
In Section \ref{sec:mfemelas}, we propose and analyze low-order mixed finite element methods for linear elasticity.  Finally, numerical results are presented in Section~\ref{sec:numresult}.

\section{Low-order finite element elasticity complexes in two dimensions}\label{sec:femelascomplex}	

We will construct two low-order finite element elasticity complexes on triangular meshes in two dimensions in this section.
\subsection{Notation}	
Let $\Omega\subset \mathbb{R}^d~(d\geq 2)$ be a bounded
polytope.
	Given a bounded domain $D \subset \mathbb{R}^{d}$ with the diameter $h_D$,  we use $H^{m}(D) (m\geq 0)$ to denote the set of all $L^{2}(D)$ functions whose derivatives up to order $m$ are also square-integrable. Set $\mathbb M:=\mathbb R^{d\times d}$.
Denote by $\mathbb S$ and $\mathbb K$ the subspace of symmetric matrices and skew-symmetric matrices of $\mathbb M$, respectively. For a space $B(D)$ defined on $D$,
let $B(D; \mathbb{X})=B(D)\otimes\mathbb{X}$ be its vector or tensor version for $\mathbb{X}$ being $\mathbb{R}^d$, $\mathbb{M}$, $\mathbb{S}$ and $\mathbb{K}$. Set $B(D; \mathbb{R})=B(D)$ for simplicity. The norm and semi-norm of ${H}^{m}(D; \mathbb{X})$ are denoted, respectively, by $\|\cdot \|_{m, D}$ and $|\cdot |_{m, D}$. Let $(\cdot, \cdot)_{D}$ be the standard inner product on ${L}^{2}(D; \mathbb{X})$.  Denote by ${H}^{m}_{0}(D; \mathbb{X})$ the closure of ${C}^{\infty}_{0}(D; \mathbb{X})$ with respect to the norm $\|\cdot \|_{m, D}$. Let ${H}(\operatorname{div}, D; \mathbb{S})$ consist of square-integrable symmetric
	matrix fields with square-integrable divergence. The ${H}(\div)$ norm is defined by
	$$
	\|\boldsymbol{\tau}\|_{{H}(\operatorname{div}, D)}^2:=\|\boldsymbol{\tau}\|_{0, D}^2+\|\operatorname{div} \boldsymbol{\tau}\|_{0, D}^2.
	$$
Let $H_0(\div, D;\mathbb{S})$ be the closure of ${C}^{\infty}_{0}(D; \mathbb{S})$ with respect to the norm $\|\cdot\|_{{H}(\operatorname{div}, D)}$.
When $D=\Omega$, we will abbreviate $\|\cdot \|_{m, \Omega}$, $|\cdot |_{m, \Omega}$, $(\cdot, \cdot)_{\Omega}$ and $\|\cdot\|_{{H}(\operatorname{div}, \Omega)}$ as $\|\cdot \|_{m}$, $|\cdot |_{m}$, $(\cdot, \cdot)$ and $\|\cdot\|_{{H}(\operatorname{div})}$, respectively.

For a bounded domain $D\subset\mathbb{R}^{d}$ and 
a non-negative integer $k$, 
let $\mathbb P_k(D)$ stand for the set of all polynomials in $D$ with total degree no more than $k$. When $k<0$, $\mathbb P_k(D) = \{0\}.$
Let $\mathbb H_k(D):=\mathbb P_k(D)\backslash\mathbb P_{k-1}(D)$ be the space of homogeneous polynomials of degree $k$. 
Recall that 
$$
\dim \mathbb P_{k}(D) = { k + d \choose d} = { k + d \choose k},\quad \dim \mathbb H_{k}(D) = { k + d -1  \choose d-1} = { k + d - 1 \choose k} 
$$ for a $d$-dimensional domain $D$.
Let $Q_{D}^{k} : L^{2}(D) \rightarrow \mathbb{P}_{k}(D)$ be the $L^{2}$-orthogonal projector, and its vector version is also denoted by $Q_{D}^{k}$. Set $Q_{D} := Q_{D}^{0}$. 
For an integer $k\geq 0$, the shape function space of the first kind Ned\'el\'ec element~\cite{Nedelec1986} is
$$
{\rm ND}_{k}(D) = \mathbb P_{k}(K;\mathbb R^d) \oplus \mathbb H_{k}(D; \mathbb K)\boldsymbol x =\grad \mathbb P_{k+1}(D)\oplus\mathbb P_{k}(D;\mathbb K)\boldsymbol x
$$
with $\mathbb P_{k}(D;\mathbb K)\boldsymbol x:=\{\boldsymbol{\tau}\boldsymbol{x}: \boldsymbol{\tau}\in\mathbb P_{k}(D;\mathbb K)\}$.
Let ${\bf RM} :={\rm ND}_{0}(D)$, which is the space of rigid motions.

For a $d$-dimensional simplex $K$, we let $\Delta(K)$ denote all the subsimplices of $K$, while $\Delta_{\ell}(K)$ denotes the set of subsimplices of dimension $\ell$, for $0\leq \ell \leq d$. Elements of $\Delta_0(K) = \{\texttt{v}_0, \texttt{v}_1, \ldots, \texttt{v}_d\}$ are $d+1$ vertices of $K$ and $\Delta_d(K) = K$.
Let $\lambda_i$ be the barycentric coordinates with respect to the vertex $\texttt{v}_i$. 
Set $\boldsymbol{t}_{ij}:=\texttt{v}_j-\texttt{v}_i \text{ for } 0\leq i, j \leq d$. We have \cite[Section 5.2]{ChenHuang2022a}
	\begin{equation}
		\boldsymbol{t}_{ij} \cdot\nabla \lambda_{\ell}=
		\delta_{j{\ell}}-\delta_{i{\ell}}=
		\begin{cases}
			1, & \text{if} \ {\ell}=j, \\
			-1,& \text{if} \ {\ell}=i, \\
			0, & \text{if} \ {\ell} \neq i,j.
		\end{cases}
		\label{tij}
	\end{equation}

For $f\in\Delta_{\ell}(K)$ with $0\leq \ell\leq d$, let $\boldsymbol n_{f,1}, \cdots, \boldsymbol n_{f,d-\ell}$ be its
mutually perpendicular unit normal vectors, and $\boldsymbol t_{f,1}, \cdots, \boldsymbol t_{f,\ell}$ be its
mutually perpendicular unit tangential vectors.
We abbreviate $\boldsymbol n_{f,1}$ as $\boldsymbol n_{f}$ or $\boldsymbol n$ when $\ell=d-1$, and $\boldsymbol t_{f,1}$ as $\boldsymbol t_{f}$ or $\boldsymbol t$ when $\ell=1$.
We also abbreviate $\boldsymbol n_{f,i}$ and $\boldsymbol t_{f,i}$ as $\boldsymbol n_{i}$ and $\boldsymbol t_{i}$ respectively if not causing any confusion. 
Furthermore, we use $\boldsymbol n_{\partial K}$ to denote the unit outward normal vector of $\partial K$, which will be abbreviated as $\boldsymbol{n}$ if not causing any confusion. 
Given a face $F\in\Delta_{d-1}(K)$, and a vector $\boldsymbol v\in \mathbb R^d$, define 
$$
\Pi_F\boldsymbol v= (\boldsymbol n_F\times \boldsymbol v)\times \boldsymbol n_F = (\boldsymbol I - \boldsymbol n_F\boldsymbol n_F^{\intercal})\boldsymbol v
$$ 
as the projection of $\boldsymbol v$ onto the face $F$. Let $b_F\in\mathbb P_d(K)$ be the face bubble function of $F$, then $b_F=\lambda_0\cdots\lambda_{i-1}\lambda_{i+1}\cdots\lambda_{d}$ when $F$ is opposite to vertex $\texttt{v}_i$. 
For edge $e\in\Delta_1(K)$ having end points $\texttt{v}_i$ and $\texttt{v}_j$, define the edge bubble function $b_e=\lambda_i\lambda_j$.


	
Denote by $\mathcal{T}_h$ a conforming triangulation of $\Omega$ with each geometric element being a simplex, where $h=\max_{T\in\mathcal{T}_h}h_T$. Let $\Delta_{\ell}(\mathcal{T}_h)$ denotes the set of all $\ell$-dimensional subsimplices of $\mathcal{T}_h$ for $0\leq \ell \leq d-1$.	
Let $H^1(\mathcal{T}_h;\mathbb X):=H^1(\mathcal{T}_h)\otimes \mathbb X$, where
$$
H^1(\mathcal{T}_h):=\{v\in L^2(\Omega): v|_K\in H^1(K)\;\; \textrm{ for } K\in\mathcal{T}_h\}.
$$	

	
	Consider two adjacent simplices $K^+$ and $K^-$ sharing an interior $(d-1)$-dimensional face $F$. Denote
by $\boldsymbol{n}^+$ and $\boldsymbol{n}^-$ the unit outward normals to the common face $F$ of the simplices
$K^+$ and $K^-$, respectively. For a scalar-valued or vector-valued function $v$, write $v^+ := v|_{K^+}$ and
$v^- := v|_{K^-}$. Then define the jump of $v$ on $F$ as follows:
$$
\llbracket v\rrbracket:=v^{+} \boldsymbol{n}_{F} \cdot \boldsymbol{n}^{+}+v^{-} \boldsymbol{n}_{F} \cdot \boldsymbol{n}^{-}.
$$
On a face $F \in\partial K$ lying on the boundary $\partial\Omega$, the above term is defined by
$\llbracket v\rrbracket:=v\boldsymbol{n}_{F} \cdot \boldsymbol{n}_{\partial K}.$

For vectors $\boldsymbol{u}$ and $\boldsymbol{v}$, define the tensor product $\boldsymbol{u}\otimes\boldsymbol{v}=\boldsymbol{u}\boldsymbol{v}^{\intercal}.$
Let the symmetric part $\sym\boldsymbol{\boldsymbol{\tau}}:=\frac{1}{2}(\boldsymbol{\tau}+\boldsymbol{\tau}^{\intercal})$ for tensor function $\boldsymbol{\tau}$.
Define the symmetric gradient $\boldsymbol{\varepsilon}(\boldsymbol{v}):=\sym(\nabla\boldsymbol{v})$ for smooth vector function $\boldsymbol{v}$, then ${\bf RM}$ is the kernel of the strain operator $\boldsymbol{\varepsilon}$.
For scalar function $v$ in two dimensions, denote
$$
\curl v:=\left(
\frac{\partial v}{\partial x_2},
-\frac{\partial v}{\partial x_1}
\right)^{\intercal},
\quad
\air v:={\curl}\curl v=\begin{pmatrix}
\medskip
\frac{\partial^2v}{\partial x_2^2} & -\frac{\partial^2v}{\partial x_1\partial x_2} \\
-\frac{\partial^2v}{\partial x_1\partial x_2} & \frac{\partial^2v}{\partial x_1^2}
\end{pmatrix}.
$$
	
	
	Throughout this paper, we use
	``$\lesssim\cdots $" to mean that ``$\leq C\cdots$", where
	letter $C$ is a generic positive constant independent of $h$ and the parameter $\lambda$,
	which may stand for different values at its different occurrences. And the notation $A\eqsim B$ means $B\lesssim A\lesssim B$. Denote by $\#A$ the number of elements in a finite set $A$.

\subsection{A low-order ${H}(\div; \mathbb{S})$-conforming finite element}	
We focus on constructing a conforming low-order
finite element for the space ${H}(\operatorname{div}, \Omega; \mathbb{S})$ in two dimensions in this subsection.  To this end, recall
the double-directional polynomial complex \cite[Section 3]{ChenHuang2020}: for integer $k\geq 0$, the sequence
	\begin{align*}
	0\rightarrow\mathbb{P}_1(K)\xrightleftarrow[$\subset$][$\pi_1$]\mathbb{P}_{k+2}(K)\xrightleftarrow[$\air$][$(\boldsymbol{x}^{\perp})^{\intercal}\boldsymbol{\tau}\boldsymbol{x}^{\perp}$] \mathbb{P}_{k}(K,\mathbb{S}) \xrightleftarrow[$\div$][$\sym(\boldsymbol{v}\boldsymbol{x}^\intercal)$] \mathbb{P}_{k-1}(K; \mathbb{R}^2) \xrightleftarrow[][$\supset$] 0
\end{align*}
is exact, where $\boldsymbol{x}^{\perp}:=(x_2,-x_1)^{\intercal}$ is the rotation of $\boldsymbol{x}$, and $\pi_1v:=v(0,0)+\boldsymbol{x}^{\intercal}\nabla v(0,0)$. 
By this double-directional polynomial complex, we have the decomposition
\begin{equation}\label{eq:Spolydecomp}	
\mathbb{P}_{k}(K,\mathbb{S}):=\air\mathbb{P}_{k+2}(K) \oplus \sym(\mathbb{P}_{k-1}(K; \mathbb{R}^2)\boldsymbol{x}^\intercal),
\end{equation}
where $\mathbb{P}_{k}(K; \mathbb{R}^2)\boldsymbol{x}^\intercal:=\{\boldsymbol{v}\boldsymbol{x}^\intercal: \boldsymbol{v}\in\mathbb{P}_{k}(K; \mathbb{R}^2)\}$.

Notice that for a rigid motion $\boldsymbol{v}\in\mathbf{RM}$, the tangential part $\boldsymbol{v}\cdot\boldsymbol{t}$ on edge $e$ is a constant rather than a linear polynomial. This fact and the decomposition \eqref{eq:Spolydecomp} motivate us to take the space of shape functions on triangle $K \in \mathcal{T}_h$ as
\begin{equation*}
\boldsymbol{\Sigma}(K):=\{ \boldsymbol{\tau} \in \mathbb{P}_3(K;\mathbb{S}):\div \boldsymbol{\tau} \in \mathbb{P}_1(K;\mathbb{R}^2), \boldsymbol{t}^{\intercal} \boldsymbol{\tau}\boldsymbol{n} |_e \in \mathbb{P}_2(e) \ \ \forall~e \in \Delta_1(K) \}.
\end{equation*}

\begin{lemma}
For $K \in \mathcal{T}_h$, we have
$$
\boldsymbol{\Sigma}(K)=\air\mathbb{P}_5^-(K)\oplus \sym(\mathbb{P}_1(K;\mathbb{R}^2)\boldsymbol{x}^\intercal),
$$
where the shape function space of Bell element \cite{Bell1969}
\begin{equation*}
\mathbb{P}_5^-(K):=\left \{ 
q \in \mathbb{P}_5(K):\partial _n q|_e \in \mathbb{P}_3(e) \quad \forall~e \in \Delta_1(K)
\right \}.
\end{equation*}
\end{lemma}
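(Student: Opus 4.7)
The plan is to start from the polynomial decomposition \eqref{eq:Spolydecomp} with $k=3$, namely
$$\mathbb{P}_{3}(K,\mathbb{S}) = \air\mathbb{P}_{5}(K) \oplus \sym(\mathbb{P}_{2}(K;\mathbb{R}^2)\boldsymbol{x}^\intercal),$$
and then analyze how the two defining constraints of $\boldsymbol{\Sigma}(K)$ translate into the two summands. Given $\boldsymbol{\tau}\in\boldsymbol{\Sigma}(K)\subset\mathbb{P}_3(K;\mathbb{S})$, I would write $\boldsymbol{\tau}=\air q+\sym(\boldsymbol{v}\boldsymbol{x}^\intercal)$ with $q\in\mathbb{P}_5(K)$ (unique modulo $\mathbb{P}_1(K)$, which is $\ker\air$) and $\boldsymbol{v}\in\mathbb{P}_2(K;\mathbb{R}^2)$, and then show that the two constraints in the definition of $\boldsymbol{\Sigma}(K)$ are exactly the constraints that pin $\boldsymbol{v}$ to $\mathbb{P}_1(K;\mathbb{R}^2)$ and $q$ to $\mathbb{P}_5^-(K)$.

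For the divergence constraint, $\div(\air q)=0$ so $\div\boldsymbol{\tau}=\div\sym(\boldsymbol{v}\boldsymbol{x}^\intercal)$. The exactness of the double-directional polynomial complex says $\div$ is a bijection from $\sym(\mathbb{P}_2(K;\mathbb{R}^2)\boldsymbol{x}^\intercal)$ onto $\mathbb{P}_2(K;\mathbb{R}^2)$; composing with the injection $\sym(\cdot\,\boldsymbol{x}^\intercal)$ gives a bijection $\Phi:\mathbb{P}_2(K;\mathbb{R}^2)\to\mathbb{P}_2(K;\mathbb{R}^2)$. Since $\Phi$ preserves total polynomial degree (it is easy to check that $\div\sym(\boldsymbol{v}\boldsymbol{x}^\intercal)$ has the same degree as $\boldsymbol{v}$), it restricts to a linear map $\mathbb{P}_1(K;\mathbb{R}^2)\to\mathbb{P}_1(K;\mathbb{R}^2)$, which is injective and hence bijective by dimension count. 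Therefore $\div\boldsymbol{\tau}\in\mathbb{P}_1(K;\mathbb{R}^2)$ forces $\boldsymbol{v}\in\mathbb{P}_1(K;\mathbb{R}^2)$.

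For the tangential-normal constraint, the key identity (the main computational obstacle) is
$$\boldsymbol{t}^\intercal(\air q)\boldsymbol{n}=-\partial_t\partial_n q,$$
which I would verify by direct calculation using $\air q=\operatorname{cof}(\hess q)$ and the 2D orthogonality $\boldsymbol{t}=\boldsymbol{n}^{\perp}$. On the other hand, a direct expansion of $\sym(\boldsymbol{v}\boldsymbol{x}^\intercal)$ gives
$$\boldsymbol{t}^\intercal\sym(\boldsymbol{v}\boldsymbol{x}^\intercal)\boldsymbol{n}=\tfrac12\bigl((\boldsymbol{t}\cdot\boldsymbol{v})(\boldsymbol{x}\cdot\boldsymbol{n})+(\boldsymbol{n}\cdot\boldsymbol{v})(\boldsymbol{x}\cdot\boldsymbol{t})\bigr),$$
and on any fixed edge $e$ the factor $\boldsymbol{x}\cdot\boldsymbol{n}$ is constant while $\boldsymbol{x}\cdot\boldsymbol{t}$ is affine; together with $\boldsymbol{v}|_e\in\mathbb{P}_1(e;\mathbb{R}^2)$ (forced in the previous step), this piece automatically lies in $\mathbb{P}_2(e)$. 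Hence the constraint $\boldsymbol{t}^\intercal\boldsymbol{\tau}\boldsymbol{n}|_e\in\mathbb{P}_2(e)$ reduces to $\partial_t\partial_n q|_e\in\mathbb{P}_2(e)$, which is equivalent to $\partial_n q|_e\in\mathbb{P}_3(e)$; that is exactly the Bell condition defining $\mathbb{P}_5^-(K)$.

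Finally, the decomposition is a direct sum because it is inherited from \eqref{eq:Spolydecomp}, and both inclusions are now established: the argument above proves $\boldsymbol{\Sigma}(K)\subseteq\air\mathbb{P}_5^-(K)\oplus\sym(\mathbb{P}_1(K;\mathbb{R}^2)\boldsymbol{x}^\intercal)$, and the two identities $\boldsymbol{t}^\intercal(\air q)\boldsymbol{n}=-\partial_t\partial_n q$ and the estimate on $\sym(\boldsymbol{v}\boldsymbol{x}^\intercal)$ immediately give the reverse inclusion. As a sanity check one may count dimensions: $\dim\mathbb{P}_5^-(K)-\dim\mathbb{P}_1(K)=18-3=15$ and $\dim\mathbb{P}_1(K;\mathbb{R}^2)=6$, giving $21$, which matches the direct count of $\dim\boldsymbol{\Sigma}(K)$ obtained by imposing $6$ div conditions and $3$ edge conditions on $\mathbb{P}_3(K;\mathbb{S})$ ($30-6-3=21$). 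The heart of the proof, and where I expect the main work to be, is the trace identity $\boldsymbol{t}^\intercal(\air q)\boldsymbol{n}=-\partial_t\partial_n q$ together with the clean observation that the $\sym(\boldsymbol{v}\boldsymbol{x}^\intercal)$ piece contributes nothing new to the edge constraint once $\boldsymbol{v}$ has been forced into $\mathbb{P}_1$.
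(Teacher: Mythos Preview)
Your proof is correct and follows essentially the same route as the paper: start from the decomposition \eqref{eq:Spolydecomp}, use the divergence constraint to force $\boldsymbol v\in\mathbb P_1(K;\mathbb R^2)$, and then use the edge constraint together with $\boldsymbol t^\intercal(\air q)\boldsymbol n=-\partial_t\partial_n q$ to force $q\in\mathbb P_5^-(K)$. The paper is simply terser at two points: it passes through the intermediate space $\widetilde{\boldsymbol\Sigma}(K)=\{\boldsymbol\tau\in\mathbb P_3(K;\mathbb S):\div\boldsymbol\tau\in\mathbb P_1\}$ and invokes \eqref{eq:Spolydecomp} directly to obtain $\widetilde{\boldsymbol\Sigma}(K)=\air\mathbb P_5(K)\oplus\sym(\mathbb P_1(K;\mathbb R^2)\boldsymbol x^\intercal)$ (your degree-preservation argument for $\Phi$ is what justifies this), and it disposes of the $\sym(\boldsymbol p\boldsymbol x^\intercal)$ edge contribution in one line by observing that $\boldsymbol p\in\mathbb P_1$ already gives $\sym(\boldsymbol p\boldsymbol x^\intercal)\in\mathbb P_2(K;\mathbb S)$, so every edge trace is automatically in $\mathbb P_2(e)$ without your explicit expansion.
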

\begin{proof}
It is easy to see that $\air\mathbb{P}_5^-(K)\oplus \sym(\mathbb{P}_1(K;\mathbb{R}^2)\boldsymbol{x}^\intercal)\subseteq\boldsymbol{\Sigma}(K)$. To prove the other side, we introduce $\widetilde{\boldsymbol{\Sigma}}(K)= \{ \boldsymbol{\tau} \in \mathbb{P}_3(K;\mathbb{S}):\div \boldsymbol{\tau} \in \mathbb{P}_1(K;\mathbb{R}^2)\}$, then $\boldsymbol{\Sigma}(K)=\{\boldsymbol{\tau} \in \widetilde{\boldsymbol{\Sigma}}(K):\boldsymbol{t}^{\intercal} \boldsymbol{\tau}\boldsymbol{n} |_e \in \mathbb{P}_2(e) \ \ \forall~e \in \Delta_1(K) \}$. By the decomposition \eqref{eq:Spolydecomp}, it holds that
$$
\widetilde{\boldsymbol{\Sigma}}(K)=\air\mathbb{P}_5(K)\oplus \sym(\mathbb{P}_1(K;\mathbb{R}^2)\boldsymbol{x}^\intercal).
$$
Take $\boldsymbol{\tau}=\air q\oplus \sym(\boldsymbol{p}\boldsymbol{x}^\intercal)\in\boldsymbol{\Sigma}(K)$ with $q\in\mathbb{P}_5(K)$ and $\boldsymbol{p}\in\mathbb{P}_1(K;\mathbb{R}^2)$. Since $\boldsymbol{t}^{\intercal}\boldsymbol{\tau}\boldsymbol{n} |_e \in \mathbb{P}_2(e)$ for $e \in \Delta_1(K)$, we have $\boldsymbol{t}^{\intercal}(\air q)\boldsymbol{n} |_e \in \mathbb{P}_2(e)$, which means $\partial _n q|_e \in \mathbb{P}_3(e)$. Thus, $q\in\mathbb{P}_5^-(K)$, which ends the proof.
\end{proof}

We refer
	to \cite{Okabe1994} for the basis functions of Bell element. The dimension of $\boldsymbol{\Sigma}(K)$ is 21.
	The degrees of freedom (DoFs) for $\boldsymbol{\Sigma}(K)$ are given by
\begin{subequations}\label{sigmadof}
		\begin{align}
		\label{sigmadof1}	\boldsymbol{\tau}(\texttt{v}) & \quad \forall~\texttt{v} \in \Delta_0(K),\\
		\label{sigmadof2}	(\boldsymbol{n}^{\intercal}\boldsymbol{\tau} \boldsymbol{n},q)_e & \quad\forall~q\in \mathbb{P}_{1}(e), e\in \partial K,\\
		\label{sigmadof3}	(\boldsymbol{t}^{\intercal}\boldsymbol{\tau} \boldsymbol{n},q)_e & \quad\forall~q\in \mathbb{P}_{0}(e), e\in \partial K,\\
		\label{sigmadof4}	(\boldsymbol{\tau}, \boldsymbol{q})_K & \quad \forall~\boldsymbol{q} \in \mathbb{P}_{0}(K ; \mathbb{S}).
	\end{align}
\end{subequations}

	\begin{lemma}
		The DoFs \eqref{sigmadof} are unisolvent for $\boldsymbol{\Sigma}(K)$.
	\end{lemma}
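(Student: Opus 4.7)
The plan is to first confirm the dimension count and then derive $\boldsymbol{\tau}=0$ from vanishing DoFs. A direct count of \eqref{sigmadof1}--\eqref{sigmadof4} gives $9+6+3+3=21=\dim\boldsymbol{\Sigma}(K)$, so only the vanishing implication is needed.

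Assume all DoFs in \eqref{sigmadof} vanish. First I would show $\boldsymbol{\tau}\boldsymbol{n}|_{\partial K}=0$ edge by edge. By \eqref{sigmadof1}, $\boldsymbol{n}^{\intercal}\boldsymbol{\tau}\boldsymbol{n}|_e\in\mathbb{P}_3(e)$ vanishes at the two endpoints of $e$, hence lies in $b_e\mathbb{P}_1(e)$, and \eqref{sigmadof2} forces it to be zero. Similarly $\boldsymbol{t}^{\intercal}\boldsymbol{\tau}\boldsymbol{n}|_e\in\mathbb{P}_2(e)$ (the constraint built into the definition of $\boldsymbol{\Sigma}(K)$) vanishes at the endpoints, lies in $b_e\mathbb{P}_0(e)$, and is killed by \eqref{sigmadof3}. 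Next, for any $\boldsymbol{v}\in\mathbb{P}_1(K;\mathbb{R}^2)$, integration by parts together with $\boldsymbol{\tau}\boldsymbol{n}|_{\partial K}=0$ give $(\div\boldsymbol{\tau},\boldsymbol{v})_K=-(\boldsymbol{\tau},\boldsymbol{\varepsilon}(\boldsymbol{v}))_K=0$ via \eqref{sigmadof4}, since $\boldsymbol{\varepsilon}(\boldsymbol{v})\in\mathbb{P}_0(K;\mathbb{S})$. Because $\div\boldsymbol{\tau}\in\mathbb{P}_1(K;\mathbb{R}^2)$ by the definition of $\boldsymbol{\Sigma}(K)$, this yields $\div\boldsymbol{\tau}=0$.

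Using the previous lemma I can write $\boldsymbol{\tau}=\air q+\sym(\boldsymbol{p}\boldsymbol{x}^{\intercal})$ with $q\in\mathbb{P}_5^-(K)$ and $\boldsymbol{p}\in\mathbb{P}_1(K;\mathbb{R}^2)$. The decomposition \eqref{eq:Spolydecomp} for $k=3$ makes $\div$ injective on $\sym(\mathbb{P}_2(K;\mathbb{R}^2)\boldsymbol{x}^{\intercal})$, so $\div\boldsymbol{\tau}=0=\div(\air q)$ forces $\sym(\boldsymbol{p}\boldsymbol{x}^{\intercal})=0$ and hence $\boldsymbol{p}=0$. Thus $\boldsymbol{\tau}=\air q$, and unisolvence reduces to showing $q\in\mathbb{P}_1(K)$.

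The hard step is this last reduction. Using $\boldsymbol{n}^{\intercal}(\air q)\boldsymbol{n}|_e=\partial_t^2 q|_e$ and $\boldsymbol{t}^{\intercal}(\air q)\boldsymbol{n}|_e=-\partial_t\partial_n q|_e$, the boundary vanishing above translates to $q|_e$ being affine and $\partial_n q|_e$ being constant on each edge. Let $\bar q\in\mathbb{P}_1(K)$ interpolate $q$ at the three vertices, so $q-\bar q$ vanishes on every edge and equals $br$ with $b=\lambda_0\lambda_1\lambda_2$ and $r\in\mathbb{P}_2(K)$. On the edge $e_k$ opposite vertex $\texttt{v}_k$, a direct computation gives $\partial_n b|_{e_k}=(\partial_n\lambda_k)\,b_{e_k}$, which vanishes at the endpoints of $e_k$. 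The requirement that $\partial_n q|_{e_k}=\partial_n\bar q|_{e_k}+(\partial_n\lambda_k)\,rb_{e_k}|_{e_k}$ be constant then forces $rb_{e_k}|_{e_k}$ to be a constant that vanishes at the endpoints, hence identically zero; thus $r|_{e_k}=0$. As this holds on each of the three edges, $r\in\mathbb{P}_2(K)$ must be divisible by $b$ of degree three, so $r=0$. Therefore $q=\bar q\in\mathbb{P}_1(K)$ and $\boldsymbol{\tau}=\air q=0$.
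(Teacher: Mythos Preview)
Your proof is correct and follows essentially the same route as the paper: count dimensions, deduce $\boldsymbol{\tau}\boldsymbol{n}|_{\partial K}=0$ from the vertex and edge DoFs, use \eqref{sigmadof4} and integration by parts to get $\div\boldsymbol{\tau}=0$, then write $\boldsymbol{\tau}=\air q$ and run the cubic-bubble factorization $q-\bar q=b_K r$ to force $r|_e=0$ on each edge. The only cosmetic difference is that you explicitly eliminate the $\sym(\boldsymbol{p}\boldsymbol{x}^{\intercal})$ component via injectivity of $\div$ on that summand, whereas the paper invokes the characterization $\boldsymbol{\Sigma}(K)\cap\ker(\div)=\air\mathbb{P}_5^-(K)$ directly.
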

	\begin{proof}
The number of DoFs \eqref{sigmadof} is 21, which is same as
		the dimension of $\boldsymbol{\Sigma}(K)$. Take any $\boldsymbol{\tau} \in \boldsymbol{\Sigma}(K)$ and suppose all the DoFs \eqref{sigmadof} vanish.
Since $\boldsymbol{n}^{\intercal}\boldsymbol{\tau} \boldsymbol{n}|_e \in \mathbb{P}_3(e)$ and $\boldsymbol{t}^{\intercal}\boldsymbol{\tau} \boldsymbol{n}|_e \in \mathbb{P}_2(e)$ for each $e \in \partial K$, we get from the vanishing DoFs (\ref{sigmadof1})-(\ref{sigmadof3}) that $\boldsymbol{\tau}\boldsymbol{n}|_{\partial K} = 0$. Applying the integration
by parts, we get from the vanishing DoF (\ref{sigmadof4}) that
		$$
		(\div \boldsymbol{\tau}, \boldsymbol{q})_K=(\boldsymbol{\tau}, \boldsymbol{\varepsilon}(\boldsymbol{q}))_K=0 \quad \forall~\boldsymbol{q} \in \mathbb{P}_1(K ; \mathbb{R}^2).
		$$
		Hence $\div\boldsymbol{\tau} = 0$. So there exists $q \in \mathbb{P}^-_5 (K)$ such that $\boldsymbol{\tau} = \air q$, and
$\partial_t(\nabla q)|_e = 0$ for each $e \in \Delta_1(K)$. Thus $(\nabla q)|_{\partial K}$ is constant and $q|_e$ is a linear function. Then we can assume $q|_{\partial K} = 0$. As a result, there exists $\tilde{q} \in \mathbb{P}_2(K)$ such that
		$q = b_K\tilde{q}$ with $b_K$ being the cubic bubble function on $K$. Noting that $\partial_n q|_e = (\tilde{q}\partial_n b_K)|_e$ is constant and $\partial_n b_K|_e \in \mathbb{P}_2(e)$, we get
		$\tilde{q}|_e = 0$. Finally, $\tilde{q} = q = 0$ and $\boldsymbol{\tau} = 0$.
	\end{proof}
	

	\subsection{Basis functions}\label{sub:basis}

\begin{lemma}\label{lem:phi}
For edge $e_i$ having end points ${\rm v}_j$ and ${\rm v}_k$, let
\begin{align}\label{phi_}
			\boldsymbol{\psi}_i :=
			&-10c_{ik} \lambda_j \lambda_k^2 \boldsymbol{t}_{ki} \otimes \boldsymbol{t}_{ki}-10c_{ij}(\lambda_k+\lambda_i)\lambda_j\lambda_k \boldsymbol{t}_{ij} \otimes \boldsymbol{t}_{ij} \\
			&\quad+[(6c_{ij}-9c_{ik})\lambda_j+(9c_{ij}-6c_{ik})\lambda_k]\lambda_j\lambda_k \boldsymbol{t}_{jk} \otimes \boldsymbol{t}_{jk} \notag\\
			&\quad+[(6c_{ij}-9c_{ik})\lambda_j+(2c_{ik}-3c_{ij})\lambda_k]\lambda_k\lambda_i \boldsymbol{t}_{ki} \otimes \boldsymbol{t}_{ki} \notag\\
			&\quad+[(3c_{ik}-2c_{ij})\lambda_j+(9c_{ij}-6c_{ik})\lambda_k]\lambda_i\lambda_j \boldsymbol{t}_{ij} \otimes \boldsymbol{t}_{ij}, \notag
		\end{align}
		where $c_{ij}=\nabla\lambda_i\cdot\nabla\lambda_j$ and $(i, j, k)$ is a cyclic permutation of $(0,1,2)$. 
We have $\boldsymbol{\psi}_i \in \boldsymbol{\Sigma}(K)$, and
		$$
(\boldsymbol{\psi}_i \boldsymbol{n})|_{e_j}=0,\;\; (\boldsymbol{\psi}_i \boldsymbol{n})|_{e_k}=0, \;\;(\boldsymbol{t}^{\intercal}\boldsymbol{\psi}_i \boldsymbol{n})|_{e_i}=0, \;\;
		(\boldsymbol{n}^{\intercal} \boldsymbol{\psi}_i \boldsymbol{n})|_{e_i}=(10\lambda_j \lambda_k^2)|_{e_i} .
		$$
	\end{lemma}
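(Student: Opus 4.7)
The plan is to verify directly each of the three conditions defining $\boldsymbol{\Sigma}(K)$ together with the four claimed edge identities. Since every summand in \eqref{phi_} is a cubic barycentric monomial multiplied by a constant symmetric rank-one tensor of the form $\boldsymbol{t}\otimes\boldsymbol{t}$, the containment $\boldsymbol{\psi}_i\in\mathbb P_3(K;\mathbb S)$ is immediate; what remains are the divergence constraint and the edge traces.

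For the traces on $e_j$ and $e_k$, I restrict the five summands to $\lambda_j=0$ (respectively $\lambda_k=0$) and observe that every surviving term carries the factor $\boldsymbol{t}_{ki}\otimes\boldsymbol{t}_{ki}$ (respectively $\boldsymbol{t}_{ij}\otimes\boldsymbol{t}_{ij}$). Since $\boldsymbol{t}_{ki}$ is a tangent vector of $e_j$ and $\boldsymbol{t}_{ij}$ is a tangent vector of $e_k$, multiplying by the outward normal of the respective edge annihilates these tensors. This gives $(\boldsymbol{\psi}_i\boldsymbol{n})|_{e_j}=(\boldsymbol{\psi}_i\boldsymbol{n})|_{e_k}=0$ and, as a byproduct, makes $(\boldsymbol{t}^{\intercal}\boldsymbol{\psi}_i\boldsymbol{n})|_e\in\mathbb P_2(e)$ hold trivially on these two edges.

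On $e_i$, setting $\lambda_i=0$ leaves three nonzero terms. The $\boldsymbol{t}_{jk}\otimes\boldsymbol{t}_{jk}$ piece drops out of $\boldsymbol{\psi}_i\boldsymbol{n}|_{e_i}$ because $\boldsymbol{t}_{jk}$ is tangent to $e_i$, and the rest equals $-10\lambda_j\lambda_k^2\bigl[c_{ik}(\boldsymbol{t}_{ki}\cdot\boldsymbol{n})\boldsymbol{t}_{ki}+c_{ij}(\boldsymbol{t}_{ij}\cdot\boldsymbol{n})\boldsymbol{t}_{ij}\bigr]$. Using the outward-normal identity $\boldsymbol{n}=-\nabla\lambda_i/|\nabla\lambda_i|$ combined with \eqref{tij} gives $\boldsymbol{t}_{ki}\cdot\boldsymbol{n}=-\boldsymbol{t}_{ij}\cdot\boldsymbol{n}=-1/|\nabla\lambda_i|$; together with $c_{ij}+c_{ik}=-|\nabla\lambda_i|^2$ (from $\sum_\ell\nabla\lambda_\ell=0$), the normal-normal component evaluates to $10\lambda_j\lambda_k^2$ as claimed. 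For the tangential-normal component, the same identities reduce the identity $(\boldsymbol{t}^{\intercal}\boldsymbol{\psi}_i\boldsymbol{n})|_{e_i}=0$ to the scalar relation $c_{ij}(\boldsymbol{t}_{ij}\cdot\boldsymbol{t})=c_{ik}(\boldsymbol{t}_{ki}\cdot\boldsymbol{t})$, which I verify by writing $\boldsymbol{t}$ parallel to $\boldsymbol{t}_{jk}$ and expanding both sides via \eqref{tij} once more.

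The main obstacle is the divergence condition $\div\boldsymbol{\psi}_i\in\mathbb P_1(K;\mathbb R^2)$. Using $\div(u\,\boldsymbol{a}\otimes\boldsymbol{a})=(\nabla u\cdot\boldsymbol{a})\boldsymbol{a}$ for constant $\boldsymbol{a}$, each summand contributes a quadratic vector field parallel to one of $\boldsymbol{t}_{ki},\boldsymbol{t}_{ij},\boldsymbol{t}_{jk}$, and the task is to see that the sum of these quadratic parts collapses to a linear vector field. This is a finite algebraic verification using $\sum_\ell\nabla\lambda_\ell=0$ and \eqref{tij}, and the precise coefficients $-10$, $6c_{ij}-9c_{ik}$, $2c_{ik}-3c_{ij}$ appearing in \eqref{phi_} are calibrated exactly for this cancellation; in fact one expects the cancellation to be total, with $\div\boldsymbol{\psi}_i$ vanishing identically, which is consistent with $\boldsymbol{\psi}_i$ being a basis function dual to the normal-normal edge degree of freedom and hence lying in $\air\mathbb P_5^-(K)$.
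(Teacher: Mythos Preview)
Your treatment of the edge traces is correct and essentially matches the paper's argument. One minor imprecision: the identity $c_{ij}(\boldsymbol{t}_{ij}\cdot\boldsymbol{t}_{jk})=c_{ik}(\boldsymbol{t}_{ki}\cdot\boldsymbol{t}_{jk})$ on $e_i$ does not come from \eqref{tij} directly but from the two-dimensional duality $\boldsymbol{t}_{jk}\cdot\boldsymbol{t}_{ki}=4|K|^2c_{ij}$ and $\boldsymbol{t}_{jk}\cdot\boldsymbol{t}_{ij}=4|K|^2c_{ik}$, which the paper records and uses explicitly.

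The real gap is the divergence step. You defer the computation entirely and then assert that ``one expects the cancellation to be total, with $\div\boldsymbol{\psi}_i$ vanishing identically.'' This expectation is false. A one-line check shows why: since $(\boldsymbol{\psi}_i\boldsymbol{n})|_{e_j}=(\boldsymbol{\psi}_i\boldsymbol{n})|_{e_k}=0$ while $(\boldsymbol{\psi}_i\boldsymbol{n})|_{e_i}=10\lambda_j\lambda_k^2\,\boldsymbol{n}$, integrating $\div\boldsymbol{\psi}_i$ against a constant vector $\boldsymbol{q}$ over $K$ leaves the nonzero boundary term $\int_{e_i}10\lambda_j\lambda_k^2(\boldsymbol{n}\cdot\boldsymbol{q})\,{\rm d}s$, so $\div\boldsymbol{\psi}_i\not\equiv0$. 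Your heuristic that $\boldsymbol{\psi}_i\in\air\mathbb P_5^-(K)$ is therefore wrong; moreover $\boldsymbol{\psi}_i$ is not itself one of the dual basis functions but only a building block for $\boldsymbol{\phi}_{i,1}$ and $\boldsymbol{\phi}_{i,2}$, so there is no structural reason for it to be divergence-free.

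What is actually true --- and what the paper proves by an explicit reduction --- is that the quadratic part of $\div\boldsymbol{\psi}_i$ cancels, leaving a \emph{nonzero} linear vector field. The paper computes $\div\boldsymbol{\psi}_i$ term by term via $\div(q\,\boldsymbol{t}\otimes\boldsymbol{t})=(\boldsymbol{t}\cdot\nabla q)\boldsymbol{t}$, substitutes $\boldsymbol{t}_{jk}=-\boldsymbol{t}_{ij}-\boldsymbol{t}_{ki}$ and $\lambda_i+\lambda_j+\lambda_k=1$, and collects the coefficient of the sole surviving quadratic monomial $\lambda_j\lambda_k$ to show that it vanishes. This computation is the heart of the lemma; the specific coefficients in \eqref{phi_} are tuned to make precisely the quadratic terms disappear, not everything. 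To complete your proof you must actually carry out this cancellation rather than appeal to an (incorrect) expected outcome.
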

	\begin{proof}
It is easy to see that $(\boldsymbol{\psi}_i \boldsymbol{n})|_{e_j}=0$ and $(\boldsymbol{\psi}_i \boldsymbol{n})|_{e_k}=0$ follows from \eqref{tij}, $\lambda_j|_{e_j}=0$ and $\lambda_k|_{e_k}=0$.
Thanks to \eqref{tij} and the fact $\lambda_i|_{e_i}=0$,
$$
(\boldsymbol{\psi}_i \nabla \lambda_i)|_{e_i}=10(c_{ik} \lambda_j \lambda_k^2 \boldsymbol{t}_{ik}+ c_{ij} \lambda_j \lambda_k^2 \boldsymbol{t}_{ij})|_{e_i}.
$$
Noting that $\boldsymbol{t}_{jk}\cdot\boldsymbol{t}_{ki}=4|K|^2c_{ij}$ and $\boldsymbol{t}_{jk}\cdot\boldsymbol{t}_{ij}=4|K|^2c_{ik}$,
we have 
$$
(\boldsymbol{t}_{jk}^{\intercal}\boldsymbol{\psi}_i \nabla \lambda_i)|_{e_i}=40|K|^2(-c_{ij}c_{ik} \lambda_j \lambda_k^2 + c_{ij}c_{ik} \lambda_j \lambda_k^2)|_{e_i}=0,
$$ 
$$
((\nabla \lambda_i)^{\intercal}\boldsymbol{\psi}_i \nabla \lambda_i)|_{e_i}=-10(c_{ij}+c_{ik})(\lambda_j \lambda_k^2)|_{e_i}=10 |\nabla\lambda_i|^2(\lambda_j \lambda_k^2)|_{e_i}.
$$

Next we prove $\div\boldsymbol{\psi}_i\in\mathbb{P}_1(K;\mathbb{R}^2)$. 
Using the identity $\div(q\boldsymbol{t}\otimes\boldsymbol{t})=(\boldsymbol{t}\cdot\nabla q)\boldsymbol{t}$ and \eqref{tij},
\begin{align*}
\div\boldsymbol{\psi}_i =
&20c_{ik} \lambda_j \lambda_k\boldsymbol{t}_{ki}
-10c_{ij}(\lambda_k^2+\lambda_i\lambda_k-\lambda_j\lambda_k)\boldsymbol{t}_{ij} \\
&+(2c_{ij}-3c_{ik})[\boldsymbol{t}_{jk}(3\lambda_j^2-6\lambda_j\lambda_k)+\boldsymbol{t}_{ki}(3\lambda_j\lambda_k-3\lambda_i\lambda_j)-\boldsymbol{t}_{ij}(2\lambda_i\lambda_j-\lambda_j^2)]  \\
&+(3c_{ij}-2c_{ik})[\boldsymbol{t}_{jk}(6\lambda_j\lambda_k-3\lambda_k^2)-\boldsymbol{t}_{ki}(\lambda_k^2-2\lambda_k\lambda_i)+\boldsymbol{t}_{ij}(3\lambda_i\lambda_k-3\lambda_j\lambda_k)].
\end{align*}
Employing $\boldsymbol{t}_{jk}=-\boldsymbol{t}_{ij}-\boldsymbol{t}_{ki}$ and $\lambda_i+\lambda_j+\lambda_k=1$, we get
\begin{align*}
\div\boldsymbol{\psi}_i =
&20c_{ik} \lambda_j \lambda_k\boldsymbol{t}_{ki} -10c_{ij}(\lambda_k^2+\lambda_i\lambda_k-\lambda_j\lambda_k)\boldsymbol{t}_{ij} \\
&+(2c_{ij}-3c_{ik})[\boldsymbol{t}_{ki}(9\lambda_j\lambda_k-3\lambda_i\lambda_j-3\lambda_j^2)-\boldsymbol{t}_{ij}(2\lambda_j^2+2\lambda_i\lambda_j-6\lambda_j\lambda_k)]  \\
&+(3c_{ij}-2c_{ik})[-\boldsymbol{t}_{ki}(6\lambda_j\lambda_k-2\lambda_k^2-2\lambda_k\lambda_i)+\boldsymbol{t}_{ij}(3\lambda_i\lambda_k+3\lambda_k^2-9\lambda_j\lambda_k)], \\
=&20c_{ik} \lambda_j \lambda_k\boldsymbol{t}_{ki}-10c_{ij}(\lambda_k-2\lambda_j\lambda_k)\boldsymbol{t}_{ij} \\
&+(2c_{ij}-3c_{ik})[\boldsymbol{t}_{ki}(12\lambda_j\lambda_k-3\lambda_j)-\boldsymbol{t}_{ij}(2\lambda_j-8\lambda_j\lambda_k)]  \\
&+(3c_{ij}-2c_{ik})[-\boldsymbol{t}_{ki}(8\lambda_j\lambda_k-2\lambda_k)+\boldsymbol{t}_{ij}(3\lambda_k-12\lambda_j\lambda_k)]  \\
=&-10c_{ij}\lambda_k\boldsymbol{t}_{ij} +(2c_{ij}-3c_{ik})(-3\lambda_j\boldsymbol{t}_{ki}-2\lambda_j\boldsymbol{t}_{ij})
+(3c_{ij}-2c_{ik})(2\lambda_k\boldsymbol{t}_{ki}+3\lambda_k\boldsymbol{t}_{ij})  \\
& +4[5c_{ik}\boldsymbol{t}_{ki}+5c_{ij}\boldsymbol{t}_{ij}
+(2c_{ij}-3c_{ik})(3\boldsymbol{t}_{ki}+2\boldsymbol{t}_{ij})-(3c_{ij}-2c_{ik})(2\boldsymbol{t}_{ki}+3\boldsymbol{t}_{ij})]\lambda_j\lambda_k.
\end{align*}
Notice that $5c_{ik}\boldsymbol{t}_{ki}+5c_{ij}\boldsymbol{t}_{ij}
+(2c_{ij}-3c_{ik})(3\boldsymbol{t}_{ki}+2\boldsymbol{t}_{ij})-(3c_{ij}-2c_{ik})(2\boldsymbol{t}_{ki}+3\boldsymbol{t}_{ij})=0$. Therefore, $\div\boldsymbol{\psi}_i\in\mathbb{P}_1(K;\mathbb{R}^2)$ and $\boldsymbol{\psi}_i \in \boldsymbol{\Sigma}(K)$.
\end{proof}

For edge $e_i$ with ending points $\texttt{v}_j$ and $\texttt{v}_k$, let $\boldsymbol{n}_i$ and $\boldsymbol{t}_i$ be the unit normal vector and unit tangential vector respectively.
The basis functions of $\boldsymbol{\Sigma}(K)$ are given as follows:
	\begin{itemize}
		\item[(i)] For vertex $\texttt{v}_i$ with $i=0, 1, 2$, the basis functions related to DoF (\ref{sigmadof1}) are
		$$
		\lambda_i^2 \begin{pmatrix}
			1&0 \\ 
			0&0
		\end{pmatrix}, \quad
		\lambda_i^2\begin{pmatrix}
			0&0 \\ 
			0&1
		\end{pmatrix},\quad
		\lambda_i^2\begin{pmatrix}
			0&1 \\ 
			1&0
		\end{pmatrix}.
$$
		
	\item[(ii)] For edge $e_i$ having end points $\texttt{v}_j$ and $\texttt{v}_k$, the basis functions related to DoF~(\ref{sigmadof2}) are 
	$$\boldsymbol{\phi}_{i,1}=36\lambda_j \lambda_k\boldsymbol{n}_i\otimes\boldsymbol{n}_i-6\boldsymbol{\psi}_i, \quad \boldsymbol{\phi}_{i,2}=-24\lambda_j \lambda_k\boldsymbol{n}_i\otimes\boldsymbol{n}_i+6\boldsymbol{\psi}_i,$$ 
where $\boldsymbol{\psi}_i$ is given by \eqref{phi_}. We have
\begin{align*}
\frac{1}{|e_i|}\int_{e_i}(\boldsymbol{n}^{\intercal}\boldsymbol{\phi}_{i,1}\boldsymbol{n})\lambda_j\textrm{ds}=1, \quad \frac{1}{|e_i|}\int_{e_i}(\boldsymbol{n}^{\intercal}\boldsymbol{\phi}_{i,1}\boldsymbol{n})\lambda_k\textrm{ds}=0, \\
\frac{1}{|e_i|}\int_{e_i}(\boldsymbol{n}^{\intercal}\boldsymbol{\phi}_{i,2}\boldsymbol{n})\lambda_j\textrm{ds}=0, \quad \frac{1}{|e_i|}\int_{e_i}(\boldsymbol{n}^{\intercal}\boldsymbol{\phi}_{i,2}\boldsymbol{n})\lambda_k\textrm{ds}=1.
\end{align*}
\item[(iii)] For edge $e_i$ having end points $\texttt{v}_j$ and $\texttt{v}_k$, the basis functions related to DoF~(\ref{sigmadof3}) are $\lambda_j \lambda_k \sym(\boldsymbol{t}_i \otimes \boldsymbol{n}_i).$
		
		\item[(iv)] The basis functions related to DoF (\ref{sigmadof4}) are $\lambda_j\lambda_k \boldsymbol{t}_i \otimes \boldsymbol{t}_i.$
	\end{itemize}

With the help of Lemma \ref{lem:phi}, it is easy to verify that  functions defined in the above four terms
form a basis of $\boldsymbol{\Sigma}(K)$.

\subsection{Finite element elasticity complex}
We will consider a low-order conforming finite element discretization of the elasticity complex (\ref{stresscomplex}), in which the spaces $H^2(\Omega)$
and ${L}^2
(\Omega, \mathbb{R}^2)$ are discretized by the Bell element and the piecewise linear element, respectively. Recall the Bell element space \cite{Bell1969}
	\begin{equation}
		W_h:=\left \{ v_h \in H^2(\Omega):
		v_h|_K \in \mathbb{P}_5^-(K) \quad \forall~K\in \mathcal{T}_h \right \},
		\notag
	\end{equation}
and	the piecewise linear element space
	\begin{align*}
		&\boldsymbol{V}_h:=\{\boldsymbol{q}_h\in {L}^{2}(\Omega;\mathbb{R}^{2}): \boldsymbol{q}_h|_K\in\mathbb{P}_{1}(K ; \mathbb{R}^{2})\quad \forall~K \in \mathcal{T}_{h}\}.
	\end{align*}
The DoFs for Bell element are
\begin{align*}
v(\texttt{v}), \nabla v(\texttt{v}), \nabla^2v(\texttt{v}) & \quad \forall~\texttt{v} \in \Delta_0(K).
\end{align*}
Define the global ${H}(\div,\Omega; \mathbb{S})$-conforming element space
$$
\boldsymbol{\Sigma}_h:=\{\boldsymbol{\tau}_h \in L^2(\Omega; \mathbb{S}): \boldsymbol{\tau}_h|_K\in \boldsymbol{\Sigma}(K)\;\; \forall\,K \in \mathcal{T}_{h}, \text{ DoFs \eqref{sigmadof} are single-valued}\}. 
$$
Thanks to the single-valued DoFs \eqref{sigmadof}, 
$\boldsymbol{\Sigma}_h\subset{H}(\div,\Omega ; \mathbb{S})$.

We combine these finite element spaces to form the discrete elasticity complex
\begin{equation}\label{disstresscomplex}
\mathbb{P}_1(\Omega)\xrightarrow{\subset}W_h \xrightarrow{\air}\boldsymbol{\Sigma}_h  \xrightarrow{\div}\boldsymbol{V}_h \rightarrow 0.
\end{equation}
To prove the exactness of the discrete elasticity complex \eqref{disstresscomplex}, we need
the help of the following lemma.
\begin{lemma}\label{lem:stressexact}
For $K \in \mathcal{T}_h$, it holds
\begin{equation}
\label{stressexact}
\div\mathring{\boldsymbol{\Sigma}}(K)=\mathbb{P}_1(K;\mathbb{R}^2)/\mathbf{RM}. 
\end{equation}
where $\mathring{\boldsymbol{\Sigma}}(K):=\boldsymbol{\Sigma}(K)\cap {H}_0(\div,K;\mathbb{S})$.
As a result, $\div:\mathring{\boldsymbol{\Sigma}}(K)\to\mathbb{P}_1(K;\mathbb{R}^2)/\mathbf{RM}$ is bijective.
\end{lemma}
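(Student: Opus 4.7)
The plan is to prove the identity by combining a containment, a dimension count, and an injectivity argument that largely parallels the unisolvence proof for $\boldsymbol{\Sigma}(K)$.

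First I would verify the easy containment $\div\mathring{\boldsymbol{\Sigma}}(K)\subseteq \mathbb{P}_1(K;\mathbb{R}^2)/\mathbf{RM}$. For any $\boldsymbol{\tau}\in\mathring{\boldsymbol{\Sigma}}(K)$, the defining condition of $\boldsymbol{\Sigma}(K)$ gives $\div\boldsymbol{\tau}\in\mathbb{P}_1(K;\mathbb{R}^2)$. For each $\boldsymbol{r}\in\mathbf{RM}$ integration by parts yields
\[
(\div\boldsymbol{\tau},\boldsymbol{r})_K=-(\boldsymbol{\tau},\boldsymbol{\varepsilon}(\boldsymbol{r}))_K+\langle\boldsymbol{\tau}\boldsymbol{n},\boldsymbol{r}\rangle_{\partial K}=0,
\]
since $\boldsymbol{\varepsilon}(\boldsymbol{r})=0$ and $\boldsymbol{\tau}\boldsymbol{n}|_{\partial K}=0$. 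Hence $\div\boldsymbol{\tau}$ lies in the $L^2$-orthogonal complement of $\mathbf{RM}$ in $\mathbb{P}_1(K;\mathbb{R}^2)$, i.e.\ in $\mathbb{P}_1(K;\mathbb{R}^2)/\mathbf{RM}$.

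Next I would match dimensions. The boundary DoFs \eqref{sigmadof1}--\eqref{sigmadof3} number $9+6+3=18$, so $\dim\mathring{\boldsymbol{\Sigma}}(K)=21-18=3$, while $\dim \mathbb{P}_1(K;\mathbb{R}^2)/\mathbf{RM}=6-3=3$. Therefore the two sides have the same dimension, and it suffices to show that $\div\colon\mathring{\boldsymbol{\Sigma}}(K)\to \mathbb{P}_1(K;\mathbb{R}^2)/\mathbf{RM}$ is injective.

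For injectivity, suppose $\boldsymbol{\tau}\in\mathring{\boldsymbol{\Sigma}}(K)$ with $\div\boldsymbol{\tau}=0$. By the polynomial elasticity complex, $\boldsymbol{\tau}=\air q$ for some $q\in\mathbb{P}_5(K)$, and the edge constraint $\boldsymbol{t}^{\intercal}\boldsymbol{\tau}\boldsymbol{n}|_e\in\mathbb{P}_2(e)$ forces $\partial_n q|_e\in\mathbb{P}_3(e)$, i.e.\ $q\in\mathbb{P}_5^-(K)$ (as in the unisolvence proof). The condition $\boldsymbol{\tau}\boldsymbol{n}|_{\partial K}=0$ then gives, through the identities $\boldsymbol{n}^{\intercal}(\air q)\boldsymbol{n}|_e=\partial_{tt}q|_e$ and $\boldsymbol{t}^{\intercal}(\air q)\boldsymbol{n}|_e=-\partial_t\partial_n q|_e$, that $q|_e$ is linear and $\partial_n q|_e$ is constant on each edge $e$. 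Since $\air$ annihilates $\mathbb{P}_1(K)$, I may subtract the unique affine polynomial matching the vertex values of $q$ and thereby assume $q|_{\partial K}=0$, hence $q=b_K\tilde q$ with $\tilde q\in\mathbb{P}_2(K)$. Then $\partial_n q|_e=\tilde q\,\partial_n b_K|_e$ and $\partial_n b_K|_e$ is a scalar multiple of the quadratic $\lambda_j\lambda_k|_e$; constancy of the quartic $\tilde q\lambda_j\lambda_k|_e$ together with vanishing at both endpoints forces $\tilde q|_e=0$ on every edge. So $\tilde q\in\mathbb{P}_2(K)$ vanishes on $\partial K$, which by a degree count (divisibility by $b_K$ of degree $3$) gives $\tilde q=0$, hence $q=0$ and $\boldsymbol{\tau}=0$.

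The only delicate point is the last step of the injectivity argument, namely correctly translating $\boldsymbol{\tau}\boldsymbol{n}|_{\partial K}=0$ into boundary conditions on $q$ and then pushing these through the Bell-type structure of $\mathbb{P}_5^-(K)$. However this is essentially the same calculation carried out in the unisolvence proof of $\boldsymbol{\Sigma}(K)$, so the main work has already been assembled; the present proof just repackages it with dimension counting to conclude surjectivity (and hence bijectivity) onto $\mathbb{P}_1(K;\mathbb{R}^2)/\mathbf{RM}$.
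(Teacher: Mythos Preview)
Your proof is correct but takes a different route from the paper's. The paper establishes \eqref{stressexact} by proving surjectivity directly: given $\boldsymbol{p}\in\mathbb{P}_1(K;\mathbb{R}^2)/\mathbf{RM}$, it invokes the existence of a continuous field $\boldsymbol{\tau}\in H_0^1(K;\mathbb{S})$ with $\div\boldsymbol{\tau}=\boldsymbol{p}$, then defines $\widetilde{\boldsymbol{\tau}}\in\mathring{\boldsymbol{\Sigma}}(K)$ by matching only the interior DoF \eqref{sigmadof4} to $\boldsymbol{\tau}$, and checks via integration by parts that $\div\widetilde{\boldsymbol{\tau}}=\boldsymbol{p}$; bijectivity then follows from the same dimension count you give. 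You instead prove injectivity by re-running the second half of the unisolvence argument and deduce surjectivity from the dimension match. Your route is self-contained and purely polynomial (no continuous right inverse of $\div$ needed), while the paper's is shorter once that lift is accepted and, more importantly, constructive: the interpolation-via-interior-DoF device used there is exactly the mechanism that reappears in the definition of $I_h^b$ and in the higher-dimensional setting, so the paper's proof is doing double duty as a template for later constructions.
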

\begin{proof}
Apparently we have $\div\mathring{\boldsymbol{\Sigma}}(K)\subseteq \mathbb{P}_1(K;\mathbb{R}^2)/\mathbf{RM}$. On the other hand, for any $\boldsymbol{p} \in \mathbb{P}_1(K;\mathbb{R}^2)/\mathbf{RM}$, there exists $\boldsymbol{\tau} \in {H}_0^1(K;\mathbb{S})$ satisfying $
\div \boldsymbol{\tau} = \boldsymbol{p}$. Let $\widetilde{\boldsymbol{\tau}} \in \mathring{\boldsymbol{\Sigma}}(K)$ be determined by $$(\widetilde{\boldsymbol{\tau}},\boldsymbol{q})_K=(\boldsymbol{\tau}, \boldsymbol{q})_K \ \ \forall~\boldsymbol{q} \in \mathbb{P}_0(K;\mathbb{S}).$$ Then
$$
(\div\widetilde{\boldsymbol{\tau}},\boldsymbol{q})_K=-(\widetilde{\boldsymbol{\tau}},\boldsymbol{\varepsilon}(\boldsymbol{q}))_K=-(\boldsymbol{\tau},\boldsymbol{\varepsilon}(\boldsymbol{q}))_K=(\div \boldsymbol{\tau}, \boldsymbol{q})_K \ \ \forall~\boldsymbol{q} \in \mathbb{P}_1(K;\mathbb{R}^2).
$$
Hence $\div \widetilde{\boldsymbol{\tau}}=\div \boldsymbol{\tau}=\boldsymbol{p}$, which ends the proof.
\end{proof}

\begin{lemma}\label{lem:elascomplex2d}
The finite element elasticity complex \eqref{disstresscomplex} is exact.
\end{lemma}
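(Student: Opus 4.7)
The plan is to prove exactness at each spot, deducing surjectivity of $\div$ at the end via dimension counting. The complex property is routine: $\mathbb{P}_1(\Omega)\subset W_h$ is obvious, $\div\circ\air\equiv 0$, and $\air W_h\subset \boldsymbol{\Sigma}_h$ because for $v_h|_K \in \mathbb{P}_5^-(K)$ one has $\air v_h\in \mathbb{P}_3(K;\mathbb{S})$ with vanishing divergence, and the identity $\boldsymbol{t}^{\intercal}\air v_h\,\boldsymbol{n} = -\partial_t\partial_n v_h$ together with $\partial_n v_h|_e\in\mathbb{P}_3(e)$ yields $\boldsymbol{t}^{\intercal}\air v_h\,\boldsymbol{n}|_e \in \mathbb{P}_2(e)$; single-valuedness of the DoFs \eqref{sigmadof} on $\air v_h$ follows from the $H^2$-conformity of Bell (vertex Hessian continuity and $C^1$-continuity along each edge). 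Exactness at $W_h$ is immediate since $\ker\air = \mathbb{P}_1(\Omega)\subset W_h$ on a connected domain.

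The heart of the proof is exactness at $\boldsymbol{\Sigma}_h$. Given $\boldsymbol{\tau}_h\in\boldsymbol{\Sigma}_h$ with $\div\boldsymbol{\tau}_h=0$, exactness of the continuous complex \eqref{stresscomplex} on the (simply connected) polygon $\Omega$ produces $v\in H^2(\Omega)$ with $\air v = \boldsymbol{\tau}_h$. Since $\boldsymbol{\tau}_h|_K\in\mathbb{P}_3(K;\mathbb{S})$, the potential $v|_K$ lies in $\mathbb{P}_5(K)$ modulo $\mathbb{P}_1$, and the edge constraint defining $\boldsymbol{\Sigma}(K)$, namely $\boldsymbol{t}^{\intercal}\boldsymbol{\tau}_h\boldsymbol{n}|_e\in\mathbb{P}_2(e)$, reduces via the same identity to $\partial_t\partial_n v|_e\in\mathbb{P}_2(e)$. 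Integrating along $e$ and using the a priori bound $\partial_n v|_e\in\mathbb{P}_4(e)$ then forces $\partial_n v|_e\in\mathbb{P}_3(e)$, i.e., $v|_K\in\mathbb{P}_5^-(K)$, so $v\in W_h$ and $\ker\div\cap\boldsymbol{\Sigma}_h = \air W_h$.

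For the final spot I would count dimensions. With $V, E, T$ the numbers of vertices, edges, and triangles of $\mathcal{T}_h$, the DoFs give $\dim W_h = 6V$, $\dim\boldsymbol{\Sigma}_h = 3V+3E+3T$, $\dim \boldsymbol{V}_h = 6T$. From the two preceding exactness statements, $\dim\ker\div = \dim\air W_h = 6V-3$, whence $\dim\div\boldsymbol{\Sigma}_h = (3V+3E+3T) - (6V-3) = 3(E-V+T+1) = 6T$ by Euler's formula $V-E+T = 1$ for simply connected $\Omega$; thus $\div:\boldsymbol{\Sigma}_h\to\boldsymbol{V}_h$ is surjective. The only non-routine step is the middle paragraph: one must recognize that the constraint $\boldsymbol{t}^{\intercal}\boldsymbol{\tau}\boldsymbol{n}|_e\in\mathbb{P}_2(e)$ built into $\boldsymbol{\Sigma}(K)$ is exactly the dual of the reduced Bell condition $\partial_n v|_e\in\mathbb{P}_3(e)$. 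This calibration is precisely why Bell pairs with $\boldsymbol{\Sigma}_h$, and once the identity $\boldsymbol{t}^{\intercal}\air v\boldsymbol{n} = -\partial_t\partial_n v$ is at hand the argument is transparent.
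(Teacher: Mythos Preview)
Your proof is correct, but it reverses the order of the paper's argument. The paper first proves $\div:\boldsymbol{\Sigma}_h\to\boldsymbol{V}_h$ is surjective \emph{constructively}---lifting $\boldsymbol{p}_h\in\boldsymbol{V}_h$ to some $\boldsymbol{\tau}\in H^1(\Omega;\mathbb{S})$, interpolating via the face and interior DoFs, and then correcting the rigid-motion defect elementwise using Lemma~\ref{lem:stressexact}---and only afterwards obtains $\ker\div\cap\boldsymbol{\Sigma}_h=\air W_h$ by a dimension count. You instead identify the kernel directly by lifting through the continuous complex~\eqref{stresscomplex} and then use the dimension count to get surjectivity. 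Your route is cleaner in that it bypasses Lemma~\ref{lem:stressexact} and the explicit interpolation entirely; the price is that you invoke exactness of~\eqref{stresscomplex}, which needs $\Omega$ simply connected (the paper's Euler-formula count needs this too, so no generality is lost). Conversely, the paper's constructive surjectivity argument is closer in spirit to the stable lifting $\Pi_h$ built later for the inf-sup condition, so one could argue it foreshadows the analysis in Section~\ref{sec:mfemelas}. Both approaches rest on the same Euler identity $V-E+T=1$, just applied at the opposite node of the complex.
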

\begin{proof}
The first step is to prove that the mapping $\div: \boldsymbol{\Sigma}_h \rightarrow \boldsymbol{V}_h$ is surjective.	For any $\boldsymbol{p}_h \in \boldsymbol{V}_h$, there exists $\boldsymbol{\tau} \in {H}^1
(\Omega; \mathbb{S})$ such that $\div\boldsymbol{\tau} = \boldsymbol{p}_h.$
Let
$\boldsymbol{\tau}_h \in \boldsymbol{\Sigma}_h$ be determined by
\begin{align*}
\boldsymbol{\tau}_h(\texttt{v})&=0  \qquad\qquad\quad\; \forall~\texttt{v}\in \Delta_0(\mathcal{T}_h),\\
(\boldsymbol{n}^{\intercal}\boldsymbol{\tau}_h \boldsymbol{n},q)_e&=(\boldsymbol{n}^{\intercal}\boldsymbol{\tau} \boldsymbol{n},q)_e \quad \forall~q\in \mathbb{P}_{1}(e), e\in \partial K,\\
(\boldsymbol{t}^{\intercal}\boldsymbol{\tau}_h \boldsymbol{n},q)_e&=	(\boldsymbol{t}^{\intercal}\boldsymbol{\tau} \boldsymbol{n},q)_e \quad\; \forall~q\in \mathbb{P}_{0}(e), e\in \partial K,\\
(\boldsymbol{\tau}_h, \boldsymbol{q})_K&=(\boldsymbol{\tau}, \boldsymbol{q})_K \quad\quad\;\;  \forall~\boldsymbol{q} \in \mathbb{P}_{0}(K ; \mathbb{S}), K \in \mathcal{T}_h.
\end{align*}
Then it follows from the integration by parts that
$$
\left(\boldsymbol{p}_h-\div \boldsymbol{\tau}_h, \boldsymbol{q}\right)_K=\left(\div\left(\boldsymbol{\tau}-\boldsymbol{\tau}_h\right), \boldsymbol{q}\right)_K=0 \quad \forall~ \boldsymbol{q} \in \mathbf{RM},
$$
i.e., $\left.\left(\boldsymbol{p}_h-\div \boldsymbol{\tau}_h\right)\right|_K \in \mathbb{P}_1(K;\mathbb{R}^2)/\mathbf{RM}$ for each $K \in\mathcal{T}_h$. By (\ref{stressexact}), there exists $\widetilde{\boldsymbol{\tau}}_h \in \boldsymbol{\Sigma}_h$ such that
$$\widetilde{\boldsymbol{\tau}}_h|_K \in \mathring{\boldsymbol{\Sigma}}(K),\  \div(\widetilde{\boldsymbol{\tau}}_h|_K)=(\boldsymbol{p}_h-\div \boldsymbol{\tau}_h)|_K \quad \forall~K \in  \mathcal{T}_h.$$
Hence $\boldsymbol{\tau}_h+\widetilde{\boldsymbol{\tau}}_h \in \boldsymbol{\Sigma}_h$ satisfies $\div(\boldsymbol{\tau}_h+\widetilde{\boldsymbol{\tau}}_h)=\boldsymbol{p}_h$, that is $\div \boldsymbol{\Sigma}_h = \boldsymbol{V}_h$. 

Next we prove 
$\boldsymbol{\Sigma}_h \cap \ker(\div)=\air W_h.$
We get from the Euler’s formula $\#\Delta_{1}(\mathcal{T}_h)+1=\#\Delta_{0}(\mathcal{T}_h)+\#\mathcal T_h$ that
$$\dim \boldsymbol{\Sigma}_h \cap \ker(\div) =\dim \boldsymbol{\Sigma}_h-\dim \boldsymbol{V}_h=3 \# \Delta_{0}(\mathcal{T}_h)+3\#\Delta_{1}(\mathcal{T}_h)-3\#\mathcal{T}_h=6\#\Delta_{0}(\mathcal{T}_h)-3.$$
Therefore, $\dim\boldsymbol{\Sigma}_h \cap \ker(\div)=\dim W_h-3$, as required.
\end{proof}

The finite element elasticity complex \eqref{disstresscomplex} is demonstrated in Fig. \ref{2Dstresscomplex}
	\begin{figure}[h]
		\begin{center}
			\includegraphics[width=7cm]{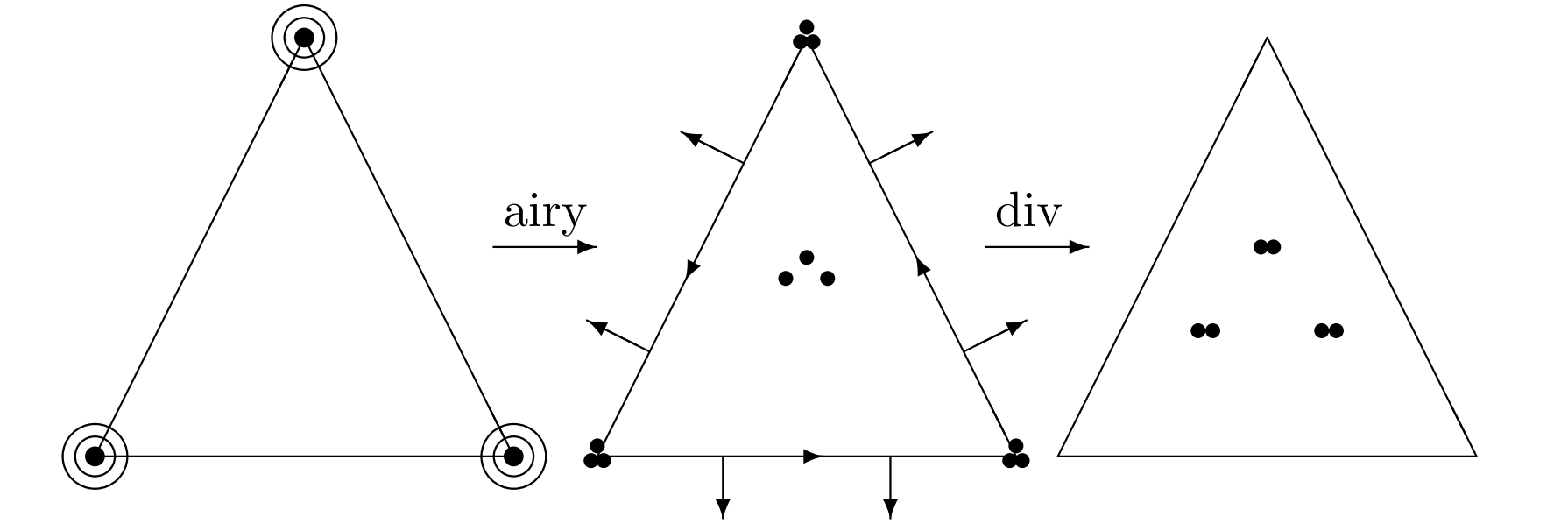}
			\caption{The finite element elasticity complex \eqref{disstresscomplex} on a triangle.  The DoFs $\nabla v(\texttt{v})$ and $\nabla^2 v(\texttt{v})$ at each vertex are shown by one circle. }
			\label{2Dstresscomplex}
		\end{center}
	\end{figure}

\subsection{A reduced finite element elasticity complex}
In the end of this section we will present a reduced version of the finite element elasticity complex \eqref{disstresscomplex} by replacing the piecewise linear polynomial space with the piecewise rigid motion space.

Take the space
	of shape functions as
	\begin{align*}
		\boldsymbol{\Sigma}^r(K)  :&=\left\{\boldsymbol{\tau} \in \mathbb{P}_3(K ; \mathbb{S}): \operatorname{div} \boldsymbol{\tau} \in \mathbf{RM},\left.\boldsymbol{t}^{\intercal} \boldsymbol{\tau} \boldsymbol{n}\right|_e \in \mathbb{P}_2(e) \quad \forall~e \in \Delta_1(K)\right\} \\
		& =\air \mathbb{P}_5^{-}(K) \oplus \operatorname{sym}(\mathbf{RM}\boldsymbol{x}^{\intercal}).
	\end{align*}
	We have $\dim \boldsymbol{\Sigma}^r(K) = 18$ and $\mathbb{P}_1(K ; \mathbb{S})\subset\boldsymbol{\Sigma}^r(K)$, but $\mathbb{P}_2(K ; \mathbb{S})\not\subseteq\boldsymbol{\Sigma}^r(K)$. The DoFs are given by
\begin{subequations}\label{dofreduced}
	\begin{align}
		\label{dofreduced1} \boldsymbol{\tau}(\texttt{v}) \quad &\forall~\texttt{v} \in \Delta_0(K),\\
		\label{dofreduced2}  \quad\left(\boldsymbol{n}^{\intercal} \boldsymbol{\tau} \boldsymbol{n}, q\right)_e \quad &\forall~q \in \mathbb{P}_1(e), e \in \partial K,\\
		\label{dofreduced3} \quad\left(\boldsymbol{t}^{\intercal} \boldsymbol{\tau} \boldsymbol{n}, q\right)_e \quad &\forall~q \in \mathbb{P}_0(e), e \in \partial K.
	\end{align}
\end{subequations}
Employing the similar argument in proving Lemma 2.1, the DoFs (\ref{dofreduced}) are unisolvent for $\boldsymbol{\Sigma}^r(K)$.

It's complicated to present explicit basis functions of $\boldsymbol{\Sigma}^r(K)$, but we can derive the basis functions of $\boldsymbol{\Sigma}^r(K)$ from the basis functions of $\boldsymbol{\Sigma}(K)$ by solving three-order linear systems. Denote the basis functions of $\boldsymbol{\Sigma}(K)$ in subsection~\ref{sub:basis} by $\boldsymbol{\phi}_i$ for $i=0,1,\ldots, 20$, where
$$
\boldsymbol{\phi}_{18}=\lambda_1\lambda_2 \boldsymbol{t}_0 \otimes \boldsymbol{t}_0, \quad \boldsymbol{\phi}_{19}=\lambda_2\lambda_0 \boldsymbol{t}_1 \otimes \boldsymbol{t}_1, \quad \boldsymbol{\phi}_{20}=\lambda_0\lambda_1 \boldsymbol{t}_2 \otimes \boldsymbol{t}_2.
$$
For $i=0,1,\ldots, 17$, take $\widetilde{\boldsymbol{\phi}}_i=\boldsymbol{\phi}_{i}+\alpha_{i, 1}\boldsymbol{\phi}_{18}+\alpha_{i, 2}\boldsymbol{\phi}_{19}+\alpha_{i, 3}\boldsymbol{\phi}_{20}$ with $\alpha_{i, 1}, \alpha_{i, 2}, \alpha_{i, 3}\in\mathbb R$ such that  satisfying $\div\widetilde{\boldsymbol{\phi}}_i$ is orthogonal to $\mathbb{P}_1(K;\mathbb{R}^2)/\mathbf{RM}$ with respect to the inner product $(\cdot, \cdot)_K$, then $\widetilde{\boldsymbol{\phi}}_i\in\boldsymbol{\Sigma}^r(K)$. 
Since $\mathbb{P}_1(K;\mathbb{R}^2)/\mathbf{RM}=\textrm{span}\{\div\boldsymbol{\phi}_{18}, \div\boldsymbol{\phi}_{19}, \div\boldsymbol{\phi}_{20}\}$ by~\eqref{stressexact},
the constant coefficients $\alpha_{i, 1}, \alpha_{i, 2}, \alpha_{i, 3}$ satisfy the linear system
$$
\begin{pmatrix}
(\div\boldsymbol{\phi}_{18}, \boldsymbol{\phi}_{18})_K & (\div\boldsymbol{\phi}_{18}, \boldsymbol{\phi}_{19})_K & (\div\boldsymbol{\phi}_{18}, \boldsymbol{\phi}_{20})_K \\
(\div\boldsymbol{\phi}_{19}, \boldsymbol{\phi}_{18})_K & (\div\boldsymbol{\phi}_{19}, \boldsymbol{\phi}_{19})_K & (\div\boldsymbol{\phi}_{19}, \boldsymbol{\phi}_{20})_K \\
(\div\boldsymbol{\phi}_{20}, \boldsymbol{\phi}_{18})_K & (\div\boldsymbol{\phi}_{20}, \boldsymbol{\phi}_{19})_K & (\div\boldsymbol{\phi}_{20}, \boldsymbol{\phi}_{20})_K
\end{pmatrix}\begin{pmatrix}
\alpha_{i, 1}\\
\alpha_{i, 2}\\
\alpha_{i, 3}
\end{pmatrix}
 =-\begin{pmatrix}
(\div\boldsymbol{\phi}_{i}, \boldsymbol{\phi}_{18})_K \\
(\div\boldsymbol{\phi}_{i}, \boldsymbol{\phi}_{19})_K \\
(\div\boldsymbol{\phi}_{i}, \boldsymbol{\phi}_{20})_K 
\end{pmatrix}.
$$
This linear system is well-posed. Solve this linear system to get $\alpha_{i, 1}, \alpha_{i, 2}, \alpha_{i, 3}$. Then we obtain $\{\widetilde{\boldsymbol{\phi}}_i\}_{i=0}^{17}$, which forms a basis of $\boldsymbol{\Sigma}^r(K)$.

Define global finite element spaces
\begin{align*}
\boldsymbol{\Sigma}_h^r&:=\{\boldsymbol{\tau}_h \in L^2(\Omega; \mathbb{S}): \boldsymbol{\tau}_h|_K\in \boldsymbol{\Sigma}^r(K)\;\; \forall\,K \in \mathcal{T}_{h}, \text{ DoFs \eqref{dofreduced} are single-valued}\}, \\
\boldsymbol{V}_h^r&:=\{\boldsymbol{q}_h \in {L}^2(\Omega ; \mathbb{R}^2):\boldsymbol{q}_h|_K \in \mathbf{RM} \quad \forall\,K \in \mathcal{T}_h\}.
\end{align*}
Clearly 
$\boldsymbol{\Sigma}_h^r\subset \boldsymbol{\Sigma}_h\subset {H}(\div,\Omega; \mathbb{S})$.

\begin{lemma}
The finite element elasticity complex
\begin{align}\label{reduceddisstresscomplex}
\mathbb{P}_1(\Omega)\xrightarrow{\subset}W_h \xrightarrow{\air}\boldsymbol{\Sigma}^r_h  \xrightarrow{\div}\boldsymbol{V}^r_h \rightarrow 0
\end{align}
is exact.
\end{lemma}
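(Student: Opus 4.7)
The plan is to imitate the proof of Lemma~\ref{lem:elascomplex2d}, but the reduced structure lets one of the two steps become simpler. Two things have to be verified: the surjectivity of $\div:\boldsymbol{\Sigma}^r_h\to\boldsymbol{V}^r_h$, and the middle identity $\boldsymbol{\Sigma}^r_h\cap\ker(\div)=\air W_h$. Exactness at $\mathbb P_1(\Omega)$ is trivial, and exactness at $W_h$ amounts to $\ker\air|_{W_h}=\mathbb P_1(\Omega)$, which follows from the classical fact that $\air$ annihilates exactly the linear polynomials together with $\mathbb P_1(\Omega)\subset W_h$.

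For surjectivity, I would take $\boldsymbol p_h\in\boldsymbol V^r_h$, lift it to some $\boldsymbol\tau\in H^1(\Omega;\mathbb S)$ with $\div\boldsymbol\tau=\boldsymbol p_h$, and define $\boldsymbol\tau_h\in\boldsymbol\Sigma^r_h$ via the DoFs~\eqref{dofreduced}: set all vertex values to zero and match the prescribed $L^2$ moments of $\boldsymbol n^{\intercal}\boldsymbol\tau\boldsymbol n$ and $\boldsymbol t^{\intercal}\boldsymbol\tau\boldsymbol n$ on each edge. Single-valuedness of the DoFs across interior edges is automatic from the continuous trace of $\boldsymbol\tau$, so $\boldsymbol\tau_h\in\boldsymbol\Sigma^r_h$ is well defined. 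Integration by parts on an arbitrary triangle $K$ against any $\boldsymbol q\in\mathbf{RM}$ gives
\begin{equation*}
(\div\boldsymbol\tau_h-\boldsymbol p_h,\boldsymbol q)_K=(\div(\boldsymbol\tau_h-\boldsymbol\tau),\boldsymbol q)_K=\langle(\boldsymbol\tau_h-\boldsymbol\tau)\boldsymbol n,\boldsymbol q\rangle_{\partial K}=0,
\end{equation*}
where $\boldsymbol\varepsilon(\boldsymbol q)=0$ killed the volume term and the boundary term vanishes because on each edge $\boldsymbol q\cdot\boldsymbol n|_e\in\mathbb P_1(e)$ and $\boldsymbol q\cdot\boldsymbol t|_e\in\mathbb P_0(e)$ are exactly the test spaces in~\eqref{dofreduced2}--\eqref{dofreduced3}. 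Since $\div\boldsymbol\tau_h|_K$ lies in $\mathbf{RM}$ by the very definition of $\boldsymbol\Sigma^r(K)$ and $\boldsymbol p_h|_K\in\mathbf{RM}$ by construction, their difference is in $\mathbf{RM}$; being orthogonal to $\mathbf{RM}$ in the non-degenerate pairing $(\cdot,\cdot)_K$, it vanishes, so $\div\boldsymbol\tau_h=\boldsymbol p_h$. Observe that no interior bubble correction in the spirit of Lemma~\ref{lem:stressexact} is required here—the built-in rigidity $\div\boldsymbol\Sigma^r(K)\subseteq\mathbf{RM}$ does the work that was previously left to the interior DoF.

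For the middle identity, the inclusion $\air W_h\subseteq\boldsymbol\Sigma^r_h\cap\ker(\div)$ follows directly from Lemma~\ref{lem:elascomplex2d} (which already gave $\air W_h\subseteq\boldsymbol\Sigma_h\cap\ker(\div)$), together with the trivial observation that a divergence-free tensor certainly satisfies $\div\boldsymbol\tau\in\mathbf{RM}$, so $\air W_h\subseteq\boldsymbol\Sigma^r_h$. Equality is then closed by dimension counting exactly as in Lemma~\ref{lem:elascomplex2d}. The DoFs~\eqref{dofreduced} give $\dim\boldsymbol\Sigma^r_h=3\#\Delta_0(\mathcal T_h)+3\#\Delta_1(\mathcal T_h)$ and $\dim\boldsymbol V^r_h=3\#\mathcal T_h$; Euler's formula $\#\Delta_1(\mathcal T_h)+1=\#\Delta_0(\mathcal T_h)+\#\mathcal T_h$ yields
\begin{equation*}
\dim\boldsymbol\Sigma^r_h\cap\ker(\div)=\dim\boldsymbol\Sigma^r_h-\dim\boldsymbol V^r_h=6\#\Delta_0(\mathcal T_h)-3=\dim W_h-3=\dim\air W_h.
\end{equation*}

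The one nontrivial step is the surjectivity of the divergence, and the main obstacle, insofar as there is one, is recognizing that the edge DoFs~\eqref{dofreduced2}--\eqref{dofreduced3} are \emph{precisely} dual to $\mathbf{RM}|_{\partial K}$ (linear normal component, constant tangential component). Once this matching is identified, a single integration by parts settles the exactness without invoking any auxiliary interior-bubble lemma.
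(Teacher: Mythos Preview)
Your proof is correct but organizes the two exactness steps in the opposite order from the paper. The paper first observes that $\boldsymbol{\Sigma}_h^r\cap\ker(\div)=\boldsymbol{\Sigma}_h\cap\ker(\div)$ (since the only extra constraint defining $\boldsymbol{\Sigma}^r(K)$ over $\boldsymbol{\Sigma}(K)$ is $\div\boldsymbol\tau\in\mathbf{RM}$, automatically satisfied on the kernel), whence the middle identity is immediate from Lemma~\ref{lem:elascomplex2d}; surjectivity of $\div$ is then obtained \emph{indirectly} by a dimension count leveraging $\div\boldsymbol{\Sigma}_h=\boldsymbol{V}_h$ and $\dim\boldsymbol{\Sigma}_h-\dim\boldsymbol{\Sigma}_h^r=\dim\boldsymbol{V}_h-\dim\boldsymbol{V}_h^r=3\#\mathcal T_h$. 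You instead prove surjectivity \emph{directly} via the interpolation-plus-integration-by-parts argument, exploiting that the edge DoFs~\eqref{dofreduced2}--\eqref{dofreduced3} are exactly dual to $\mathbf{RM}|_{\partial K}$, and then close the middle identity by dimension. Your route is more constructive and is in fact the strategy the paper itself adopts later in the higher-dimensional setting (Lemma~3.7); the paper's route here is shorter because it recycles the already-proved exactness of the larger complex~\eqref{disstresscomplex} for both steps rather than just one.
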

\begin{proof}
Since $\boldsymbol{\Sigma}_h^r \cap \operatorname{ker}(\div)=\boldsymbol{\Sigma}_h \cap \operatorname{ker}(\operatorname{div})$, we get from the exactness of the complex~\eqref{disstresscomplex}
that $\boldsymbol{\Sigma}_h^r \cap \operatorname{ker}(\div)=\air W_h$. 
Notice that
$$
\dim\boldsymbol{\Sigma}_h-\dim\boldsymbol{\Sigma}_h^r=\dim\boldsymbol{V}_h-\dim\boldsymbol{V}_h^r=3\# \mathcal{T}_h.
$$
Apply the exactness of the complex \eqref{disstresscomplex} to achieve
\begin{align*}	
\dim\div \boldsymbol{\Sigma}_h^r&=\dim\boldsymbol{\Sigma}_h^r-\dim(\boldsymbol{\Sigma}_h^r \cap \operatorname{ker}(\div))\\
&=\dim\boldsymbol{\Sigma}_h-\dim(\boldsymbol{\Sigma}_h \cap \operatorname{ker}(\div))-3\# \mathcal{T}_h \\
&=\dim\div \boldsymbol{\Sigma}_h-3\# \mathcal{T}_h=\dim\boldsymbol{V}_h-3\# \mathcal{T}_h=\dim\boldsymbol{V}_h^r.
\end{align*}
Hence $\operatorname{div} \boldsymbol{\Sigma}_h^r=\boldsymbol{V}_h^r$.
\end{proof}

The reduced finite element elasticity complex \eqref{reduceddisstresscomplex} is demonstrated in Fig. \ref{2Dreducedstresscomplex}.
\begin{figure}[htbp]
\begin{center}
\includegraphics[width=7cm]{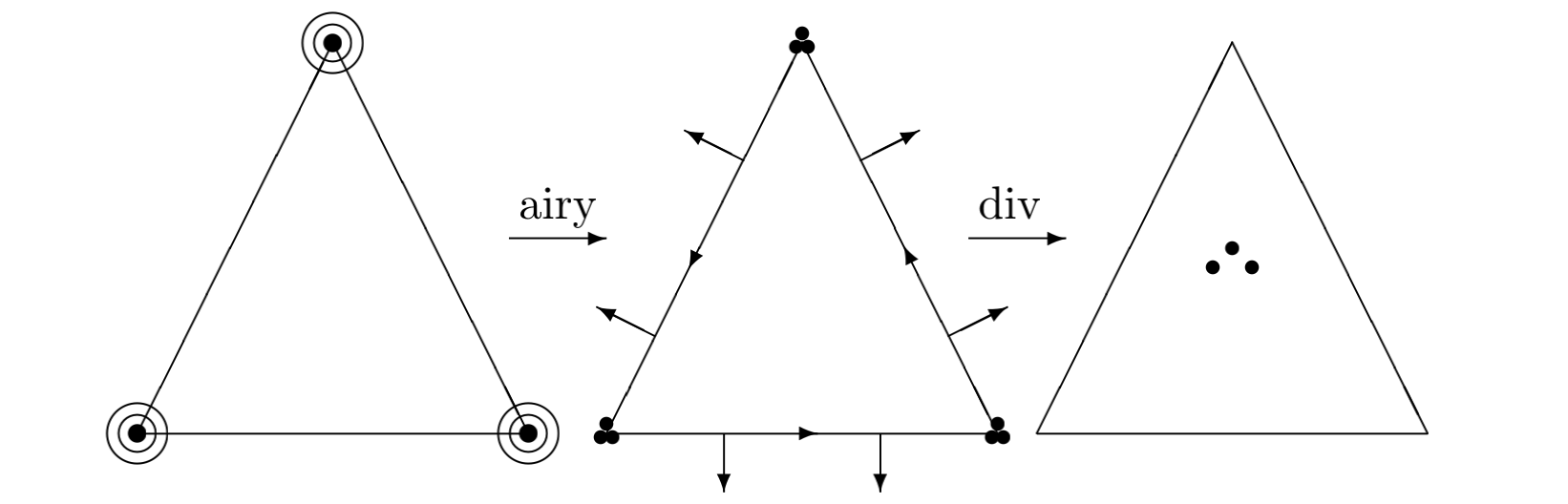}
\caption{The reduced finite element elasticity complex \eqref{reduceddisstresscomplex} on a triangle. }
\label{2Dreducedstresscomplex}
\end{center}
\end{figure}

\section{Low-order ${H}(\div; \mathbb{S})$-conforming finite elements in higher dimensions}\label{sec:divSfem}

In this section we will construct low-order ${H}(\div; \mathbb{S})$-conforming finite elements in higher dimensions, i.e. $d\geq3$.	

\subsection{Low-order finite element for symmetric tensors}

Recall the symmetric $H(\div)$-conforming finite element in \cite{ChenHuang2022}:
the DoFs
\begin{subequations}\label{HdivSBDMfemdof}
\begin{align}
\boldsymbol \tau (\texttt{v}) & \quad\forall~\texttt{v}\in \Delta_{0}(K), \label{HdivSBDMfemdof1}\\
(\boldsymbol  n_i^{\intercal}\boldsymbol \tau\boldsymbol n_j, q)_f & \quad\forall~q\in\mathbb P_{d-r}(f),  f\in\Delta_{r}(K),\;  \label{HdivSBDMfemdof2}\\
&\quad\quad 1\leq i\leq j\leq d-r, \textrm{ and } r=1,\ldots, d-2, \notag\\
(\boldsymbol  n^{\intercal}\boldsymbol \tau\boldsymbol n, q)_F & \quad\forall~q\in\mathbb P_{1}(F),  F\in\partial K,\;  \label{HdivSBDMfemdof3}\\
(\Pi_F\boldsymbol \tau\boldsymbol n, \boldsymbol q)_F & \quad\forall~\boldsymbol q\in {\rm ND}_{d-1}(F),  F\in\partial K,\label{HdivSBDMfemdof4} \\
 (\div\boldsymbol \tau, \boldsymbol q)_K &\quad~\forall~\boldsymbol q\in   \mathbb P_{d}(K;\mathbb R^d)/\mathbf{RM}, \label{HdivSBDMfemdof5}\\
 (\boldsymbol \tau, \boldsymbol q)_K &\quad~\forall~\boldsymbol q\in \ker (\cdot\boldsymbol x)\cap \mathbb P_{d-1}(K;\mathbb S) \label{HdivSBDMfemdof6}
\end{align}
\end{subequations}
are uni-solvent for $\mathbb P_{d+1}(K;\mathbb S)$.
Define the normal-normal face bubble space
$$
\mathbb B_{\partial K}^{nn}:=\{\boldsymbol{\tau}\in\mathbb P_{d+1}(K;\mathbb S): \textrm{all the DoFs in \eqref{HdivSBDMfemdof} except \eqref{HdivSBDMfemdof3} vanish}\}.
$$
Clearly, $\dim\mathbb B_{\partial K}^{nn}=d(d+1)$.
By the vanishing DoF \eqref{HdivSBDMfemdof5}, we have $\div \mathbb B_{\partial K}^{nn}\subseteq\mathbf{RM}$. For $\boldsymbol{\tau}\in\mathbb B_{\partial K}^{nn}$, $\Pi_F(\boldsymbol \tau\boldsymbol n)=0$ for $F\in\partial K$, and $(\boldsymbol  n_i^{\intercal}\boldsymbol \tau\boldsymbol n_j)|_f=0$ for $f\in\Delta_{r}(K)$,
$1\leq i\leq j\leq d-r$, $r=0,\ldots, d-2$.

\begin{lemma}
For $d\geq 3$, $\mathbb P_{2}(K;\mathbb S)\cap\mathbb B_{\partial K}^{nn}=\{0\}$.
\end{lemma}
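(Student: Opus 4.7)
The plan is to reduce the statement to the unisolvence of the full DoF set \eqref{HdivSBDMfemdof} on $\mathbb P_{d+1}(K;\mathbb S)$ recalled from \cite{ChenHuang2022}. Take any $\boldsymbol\tau\in\mathbb P_2(K;\mathbb S)\cap\mathbb B_{\partial K}^{nn}$. Since $d\ge 3$ forces $\mathbb P_2(K;\mathbb S)\subset\mathbb P_{d+1}(K;\mathbb S)$, and by the very definition of $\mathbb B_{\partial K}^{nn}$ every DoF in \eqref{HdivSBDMfemdof} except \eqref{HdivSBDMfemdof3} already vanishes on $\boldsymbol\tau$, it is enough to verify that the remaining DoFs \eqref{HdivSBDMfemdof3} also vanish; unisolvence will then yield $\boldsymbol\tau=0$.

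To do this I will prove the stronger pointwise identity $(\boldsymbol n^\intercal\boldsymbol\tau\boldsymbol n)|_F\equiv 0$ on each face $F\in\partial K$. Two ingredients drive the argument. First, for any $(d-2)$-subsimplex $f$ of $F$, the outward normal $\boldsymbol n_F$ lies in the two-dimensional normal space of $f$ (since $f\subset F\perp\boldsymbol n_F$), so $\boldsymbol n_F^\intercal\boldsymbol\tau\boldsymbol n_F|_f$ is a linear combination of the entries $\boldsymbol n_i^\intercal\boldsymbol\tau\boldsymbol n_j|_f$ that appear in \eqref{HdivSBDMfemdof2}. Second, the vanishing of \eqref{HdivSBDMfemdof2} at the level $r=d-2$ tests these entries against $\mathbb P_{d-r}(f)=\mathbb P_2(f)$, while $\boldsymbol\tau\in\mathbb P_2(K;\mathbb S)$ forces each entry to already lie in $\mathbb P_2(f)$; self-orthogonality then makes every such entry vanish on $f$, and hence $(\boldsymbol n_F^\intercal\boldsymbol\tau\boldsymbol n_F)|_f=0$.

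Consequently $(\boldsymbol n_F^\intercal\boldsymbol\tau\boldsymbol n_F)|_F\in\mathbb P_2(F)$ vanishes on the entire $(d-2)$-skeleton $\partial F$. Any polynomial on the $(d-1)$-simplex $F$ vanishing on its boundary is divisible by the product of the $d$ barycentric coordinates of $F$, which has degree $d$. Since $d\ge 3>2$, such divisibility is incompatible with being in $\mathbb P_2(F)$ unless the polynomial is identically zero. This verifies \eqref{HdivSBDMfemdof3} and completes the proof via unisolvence.

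The whole argument is a short polynomial-degree comparison, and I do not expect any substantive obstacle. The hypothesis $d\ge 3$ is used in exactly two places: it guarantees that $r=d-2$ is an admissible index in \eqref{HdivSBDMfemdof2} so that we have DoFs on codimension-two subsimplices of $F$, and it ensures that the boundary-bubble degree $d$ strictly exceeds the polynomial degree $2$.
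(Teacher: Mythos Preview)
Your proof is correct and follows essentially the same approach as the paper: both arguments show $(\boldsymbol n^\intercal\boldsymbol\tau\boldsymbol n)|_F\equiv 0$ on every face and then invoke the unisolvence of \eqref{HdivSBDMfemdof} on $\mathbb P_{d+1}(K;\mathbb S)$ to conclude $\boldsymbol\tau=0$. The paper simply asserts that the vanishing DoFs \eqref{HdivSBDMfemdof1}--\eqref{HdivSBDMfemdof2} force $(\boldsymbol n^\intercal\boldsymbol\tau\boldsymbol n)|_F=0$ without spelling out a mechanism, whereas your version makes this explicit (and is slightly more economical, using only the level $r=d-2$ of \eqref{HdivSBDMfemdof2} together with the face-bubble divisibility argument).
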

\begin{proof}
For $\boldsymbol{\tau}\in\mathbb P_{2}(K;\mathbb S)\cap\mathbb B_{\partial K}^{nn}$, 
by $(\boldsymbol{n}^{\intercal}\boldsymbol{\tau}\boldsymbol{n})|_{F}\in\mathbb P_2(F)$,
the vanishing DoFs \eqref{HdivSBDMfemdof1}-\eqref{HdivSBDMfemdof2} implies $(\boldsymbol{n}^{\intercal}\boldsymbol{\tau}\boldsymbol{n})|_{F}=0$, then all the DoFs \eqref{HdivSBDMfemdof} vanish. Thus $\boldsymbol{\tau}=0$.
\end{proof}

\begin{remark}\rm
For $F\in\partial K$, let $\mathbb B_{F}^{nn}:=\{\boldsymbol{\tau}\in\mathbb B_{\partial K}^{nn}:(\boldsymbol{n}^{\intercal}\boldsymbol{\tau}\boldsymbol{n})|_{\partial K\backslash F}=0\}$. Clearly we have $\mathbb B_{\partial K}^{nn}=\Oplus_{F\in\partial K}\mathbb B_{F}^{nn}$, and
\begin{equation*}
\mathbb B_{F}^{nn}=\big\{\boldsymbol{\tau}\in (b_F\mathbb P_1(F)\boldsymbol{n}\otimes\boldsymbol{n})\, \oplus \,\mathbb B_{d+1}(\div, K;\mathbb S): \textrm{all the DoFs \eqref{HdivSBDMfemdof5}-\eqref{HdivSBDMfemdof6} vanish} \big\},
\end{equation*}
where the bubble space \cite[Lemma 2.2]{Hu2015Higher}
$$
\mathbb B_{d+1}(\div, K;\mathbb S):=\mathbb P_{d+1}(K;\mathbb S)\cap H_0(\div, K;\mathbb{S})=\Oplus_{0\leq i<j\leq d}\lambda_i\lambda_j\mathbb P_{d-1}(K)\boldsymbol{t}_{ij}\otimes\boldsymbol{t}_{ij}.
$$
A basis of $\mathbb B_{F}^{nn}$ can be derived by solving some local linear systems with the same coefficient matrix of order $\dim\mathbb P_{d-1}(K)={2d-1\choose d}$.
\end{remark}

By enriching $\mathbb P_{2}(K;\mathbb S)$ with the normal-normal face bubble space,
take the space of shape functions as
$$
\boldsymbol{\Sigma}(K):=\mathbb P_{2}(K;\mathbb S)\oplus\mathbb B_{\partial K}^{nn}.
$$
The dimension of $\boldsymbol{\Sigma}(K)$ is $\frac{1}{4}d(d+1)^2(d+2)+d(d+1)=\frac{1}{4}d(d+1)(d^2+3d+6)$.
The DoFs for $\boldsymbol{\Sigma}(K)$ are given by
\begin{subequations}\label{lowdivShddof}
\begin{align}
\boldsymbol \tau (\texttt{v}) & \quad\forall~\texttt{v}\in \Delta_{0}(K), \label{lowdivShddof1}\\
(\boldsymbol  n_i^{\intercal}\boldsymbol \tau\boldsymbol n_j, q)_e & \quad\forall~q\in\mathbb P_{0}(e),  e\in\Delta_{1}(K), 1\leq i\leq j\leq d-1,\;  \label{lowdivShddof2}\\
(\boldsymbol  n^{\intercal}\boldsymbol \tau\boldsymbol n, q)_F & \quad\forall~q\in\mathbb P_{1}(F),  F\in\partial K,\;  \label{lowdivShddof3}\\
(\Pi_F\boldsymbol \tau\boldsymbol n, \boldsymbol q)_F & \quad\forall~\boldsymbol q\in {\rm ND}_{0}(F),  F\in\partial K,\label{lowdivShddof4} \\
 (\boldsymbol \tau, \boldsymbol q)_K &\;\;\;\,\forall~\boldsymbol q\in \mathbb P_{0}(K;\mathbb S) \label{lowdivShddof5}.
\end{align}
\end{subequations}

\begin{lemma}\label{lem:unisoldgeq3}
Let $\boldsymbol{\tau} \in \boldsymbol{\Sigma}(K)$ and $F\in\partial K$. Assume all the DoFs \eqref{lowdivShddof1}-\eqref{lowdivShddof4} on $F$ vanish, then $(\boldsymbol{\tau}\boldsymbol{n})|_F=0$.
\end{lemma}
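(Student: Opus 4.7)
The plan is to decompose $\boldsymbol{\tau} = \boldsymbol{\tau}_1 + \boldsymbol{\tau}_2$ along the direct sum $\boldsymbol{\Sigma}(K) = \mathbb{P}_2(K;\mathbb{S}) \oplus \mathbb{B}_{\partial K}^{nn}$ and to handle the normal-normal and tangential-normal components of $(\boldsymbol{\tau}\boldsymbol{n})|_F$ separately. By the definition of $\mathbb{B}_{\partial K}^{nn}$ through the Chen--Huang DoFs \eqref{HdivSBDMfemdof}, together with the face-bubble block decomposition recorded in the remark, only the $\mathbb{B}_F^{nn}$-summand of $\boldsymbol{\tau}_2$ contributes on $F$; its trace satisfies $\Pi_F(\boldsymbol{\tau}_2\boldsymbol{n})|_F=0$ and $(\boldsymbol{n}^{\intercal}\boldsymbol{\tau}_2\boldsymbol{n})|_F = b_F\, q$ for some $q\in\mathbb{P}_1(F)$, while $(\boldsymbol{n}_i^{\intercal}\boldsymbol{\tau}_2\boldsymbol{n}_j)|_f=0$ on every proper subsimplex $f$ of $F$ (via the vanishing DoFs \eqref{HdivSBDMfemdof1}--\eqref{HdivSBDMfemdof2}). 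So all boundary/edge work reduces to $\boldsymbol{\tau}_1$.

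For the normal-normal part, on every edge $e\subset F$ and every pair of edge-normals $\boldsymbol{n}_i,\boldsymbol{n}_j$, the scalar $(\boldsymbol{n}_i^{\intercal}\boldsymbol{\tau}_1\boldsymbol{n}_j)|_e\in\mathbb{P}_2(e)$ is killed by the two vertex DoFs \eqref{lowdivShddof1} at the endpoints of $e$ and by the zeroth-order edge-moment DoF \eqref{lowdivShddof2}; these three conditions are unisolvent on $\mathbb{P}_2(e)$, so $(\boldsymbol{n}_i^{\intercal}\boldsymbol{\tau}_1\boldsymbol{n}_j)|_e = 0$. Since any normal of a higher-dimensional subsimplex $f\subset F$ is also a normal of each edge of $f$, this vanishing propagates: on each $f\in\Delta_r(F)$ with $r\ge 2$, the quadratic $(\boldsymbol{n}_i^{\intercal}\boldsymbol{\tau}_1\boldsymbol{n}_j)|_f\in\mathbb{P}_2(f)$ vanishes on the $1$-skeleton of $f$, and a short argument in the basis $\{\lambda_k\lambda_l\}$ shows that any quadratic on a simplex that is zero on every edge is identically zero. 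Specialising to $f=F$ with $\boldsymbol{n}_i=\boldsymbol{n}_j=\boldsymbol{n}$ gives $(\boldsymbol{n}^{\intercal}\boldsymbol{\tau}_1\boldsymbol{n})|_F = 0$; then \eqref{lowdivShddof3} tested against $q$ itself yields $\int_F b_F q^2=0$, and positivity of $b_F$ in the interior of $F$ forces $q=0$. Hence $(\boldsymbol{n}^{\intercal}\boldsymbol{\tau}\boldsymbol{n})|_F = 0$.

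For the tangential-normal part, set $\boldsymbol{v}:=\Pi_F(\boldsymbol{\tau}\boldsymbol{n})|_F = \Pi_F(\boldsymbol{\tau}_1\boldsymbol{n})|_F\in\mathbb{P}_2(F;\mathbb{R}^{d-1})$. Apply the propagation step to a $(d-2)$-subface $F'$ of $F$ with normals $\boldsymbol{n}_F$ and $\boldsymbol{n}_{F'}^F$ (the normal to $F'$ inside $F$): the vanishing of $(\boldsymbol{n}_F^{\intercal}\boldsymbol{\tau}_1\boldsymbol{n}_{F'}^F)|_{F'}$ is precisely the vanishing of the $\partial F$-normal component of $\boldsymbol{v}$ on $F'$. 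Thus $\boldsymbol{v}$ has zero normal trace on $\partial F$, so it lies in the BDM$_2$ bubble space on $F$, which has dimension $\binom{d}{2}$. The face DoF \eqref{lowdivShddof4} now reads $(\boldsymbol{v},\boldsymbol{q})_F=0$ for every $\boldsymbol{q}\in {\rm ND}_0(F)=\mathbf{RM}$, which is exactly the standard set of interior Brezzi--Douglas--Marini DoFs on the $(d-1)$-simplex $F$; by classical unisolvency of BDM$_2$, $\boldsymbol{v}=0$, and $(\boldsymbol{\tau}\boldsymbol{n})|_F = 0$ follows. The main obstacle is this final identification: the DoFs \eqref{lowdivShddof4} are a strict subset of the Chen--Huang face DoFs \eqref{HdivSBDMfemdof4}, and one must fully exploit both the quadratic ceiling on $\boldsymbol{\tau}_1$ and the vertex--edge constraints to squeeze $\boldsymbol{v}$ into the BDM$_2$ bubble space, on which the $\mathbf{RM}$-moments then become exactly unisolvent.
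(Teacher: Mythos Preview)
Your proof is correct and follows the same overall approach as the paper: decompose $\boldsymbol{\tau}=\boldsymbol{\tau}_1+\boldsymbol{\tau}_2$ along $\mathbb{P}_2(K;\mathbb{S})\oplus\mathbb{B}_{\partial K}^{nn}$, transfer the vanishing of DoFs \eqref{lowdivShddof1}--\eqref{lowdivShddof2}, \eqref{lowdivShddof4} to $\boldsymbol{\tau}_1$ (using that $\mathbb{B}_{\partial K}^{nn}$ already kills those), deduce $(\boldsymbol{\tau}_1\boldsymbol{n})|_F=0$, and then finish off the $\boldsymbol{\tau}_2$-contribution via \eqref{lowdivShddof3} and the $b_Fq$ structure from the remark. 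The only difference is in the level of detail. The paper states in one line that the vanishing of \eqref{lowdivShddof1}--\eqref{lowdivShddof2} and \eqref{lowdivShddof4} yields $\Pi_F(\boldsymbol{\tau}_1\boldsymbol{n})=0$ and $(\boldsymbol{n}_i^{\intercal}\boldsymbol{\tau}_1\boldsymbol{n}_j)|_e=0$, leaving the first assertion to the reader; you make this explicit by (i) propagating the edge identities $(\boldsymbol{n}_i^{\intercal}\boldsymbol{\tau}_1\boldsymbol{n}_j)|_e=0$ up the skeleton to get $(\boldsymbol{v}\cdot\boldsymbol{n}_{F'}^F)|_{F'}=0$ on every $(d-2)$-face of $F$, and (ii) recognising that the remaining face moments against $\mathrm{ND}_0(F)$ are exactly the interior BDM$_2$ DoFs on the $(d-1)$-simplex $F$, so unisolvence of BDM$_2$ kills the bubble $\boldsymbol{v}=\Pi_F(\boldsymbol{\tau}_1\boldsymbol{n})$. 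This is precisely the mechanism the paper's terse line is invoking, so there is no substantive divergence in method.
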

\begin{proof}
Let $\boldsymbol{\tau}=\boldsymbol{\tau}_1+\boldsymbol{\tau}_2$ with $\boldsymbol{\tau}_1\in\mathbb P_{2}(K;\mathbb S)$ and $\boldsymbol{\tau}_2\in\mathbb B_{\partial K}^{nn}$. By the vanishing DoFs \eqref{lowdivShddof1}-\eqref{lowdivShddof2} and \eqref{lowdivShddof4}, we have $\Pi_F(\boldsymbol \tau_1\boldsymbol n)=0$, and $(\boldsymbol n_i^{\intercal}\boldsymbol \tau_1\boldsymbol n_j)|_e=0$ for $e\in\Delta_{1}(F)$,
$1\leq i\leq j\leq d-1$. Hence $(\boldsymbol{\tau}_1\boldsymbol{n})|_F=0$. Now $(\boldsymbol{\tau}\boldsymbol{n})|_F=(\boldsymbol{\tau}_2\boldsymbol{n})|_F$, so $(\boldsymbol{\tau}\boldsymbol{n})|_F=0$ follows from the vanishing DoF \eqref{lowdivShddof3}.
\end{proof}

\begin{lemma}\label{lem:unisoldgeq32}
The DoFs \eqref{lowdivShddof} are unisolvent for $\boldsymbol{\Sigma}(K)$.
\end{lemma}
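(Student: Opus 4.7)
The plan is to match the count of DoFs in \eqref{lowdivShddof} to $\dim\boldsymbol{\Sigma}(K)=\frac{d(d+1)(d^2+3d+6)}{4}$ and then establish injectivity in two stages, killing the bubble enrichment $\boldsymbol{\tau}_2$ first and the $\mathbb P_2$ part $\boldsymbol{\tau}_1$ second. For the count, the vertex DoFs contribute $\frac{d(d+1)^2}{2}$, the edge normal-normal DoFs $\binom{d+1}{2}\binom{d}{2}$, the two face groups $d(d+1)$ and $(d+1)\binom{d}{2}$, and the interior DoFs $\binom{d+1}{2}$; factoring $\frac{d(d+1)}{4}$ out of the sum reproduces $\dim\boldsymbol{\Sigma}(K)$.

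For injectivity, suppose $\boldsymbol{\tau}\in\boldsymbol{\Sigma}(K)$ has all DoFs \eqref{lowdivShddof} vanishing. Applying Lemma~\ref{lem:unisoldgeq3} to each $F\in\partial K$ yields $(\boldsymbol{\tau}\boldsymbol{n})|_{\partial K}=0$. Decompose $\boldsymbol{\tau}=\boldsymbol{\tau}_1+\boldsymbol{\tau}_2$ with $\boldsymbol{\tau}_1\in\mathbb P_2(K;\mathbb S)$ and $\boldsymbol{\tau}_2\in\mathbb B_{\partial K}^{nn}$. Using the properties of $\mathbb B_{\partial K}^{nn}$ recorded right after its definition --- namely $\Pi_F(\boldsymbol{\tau}_2\boldsymbol{n})|_F=0$ and $(\boldsymbol{n}_i^{\intercal}\boldsymbol{\tau}_2\boldsymbol{n}_j)|_f=0$ on every lower subsimplex $f\subset F$ --- the scalar $(\boldsymbol{n}^{\intercal}\boldsymbol{\tau}_2\boldsymbol{n})|_F=-(\boldsymbol{n}^{\intercal}\boldsymbol{\tau}_1\boldsymbol{n})|_F$ lies in $\mathbb P_2(F)$ and vanishes on $\partial F$. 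For $d\geq 3$ the face bubble of $F$ has degree $d>2$, so any element of $\mathbb P_2(F)$ vanishing on $\partial F$ is identically zero. Hence $(\boldsymbol{n}^{\intercal}\boldsymbol{\tau}_2\boldsymbol{n})|_F=0$ on every face, and the unisolvence of DoF \eqref{HdivSBDMfemdof3} on $\mathbb B_{\partial K}^{nn}$ forces $\boldsymbol{\tau}_2=0$.

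It remains to show that $\boldsymbol{\tau}_1\in\mathbb P_2(K;\mathbb S)\cap H_0(\div,K;\mathbb S)$ with vanishing interior DoF must be zero. Expanding $\boldsymbol{\tau}_1=\sum_{i\leq j}\lambda_i\lambda_j A_{ij}$ in the Bernstein basis with $A_{ij}\in\mathbb S$, the identity $(\boldsymbol{\tau}_1\boldsymbol{n}_{F_k})|_{F_k}=0$ on the face opposite $\texttt{v}_k$ forces $A_{ij}\boldsymbol{n}_{F_k}=0$ for every $k\notin\{i,j\}$. Varying $k$ yields $A_{ii}=0$ and, by \eqref{tij} identifying $\boldsymbol{t}_{ij}$ as a spanning vector of the one-dimensional orthogonal complement of $\{\nabla\lambda_k:k\neq i,j\}$, $A_{ij}=c_{ij}\boldsymbol{t}_{ij}\otimes\boldsymbol{t}_{ij}$ for some $c_{ij}\in\mathbb R$ when $i<j$. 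Substituting $\boldsymbol{\tau}_1=\sum_{i<j}c_{ij}\lambda_i\lambda_j\boldsymbol{t}_{ij}\otimes\boldsymbol{t}_{ij}$ into the vanishing DoF \eqref{lowdivShddof5}, which is equivalent to $\int_K\boldsymbol{\tau}_1\,dx=0$, produces
\[
\frac{|K|}{(d+1)(d+2)}\sum_{0\leq i<j\leq d}c_{ij}\,\boldsymbol{t}_{ij}\otimes\boldsymbol{t}_{ij}=0,
\]
and a sequential contraction against $\nabla\lambda_0,\nabla\lambda_1,\ldots$ (using at step $k$ the linear independence of $\{\boldsymbol{t}_{kj}:j>k\}$) exhibits $\{\boldsymbol{t}_{ij}\otimes\boldsymbol{t}_{ij}\}_{i<j}$ as a basis of $\mathbb S$, forcing every $c_{ij}=0$.

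The main technical hurdle I anticipate is the structural identification $\mathbb P_2(K;\mathbb S)\cap H_0(\div,K;\mathbb S)=\textrm{span}\{\lambda_i\lambda_j\boldsymbol{t}_{ij}\otimes\boldsymbol{t}_{ij}:0\leq i<j\leq d\}$, a low-order analogue of the bubble characterization cited for $\mathbb B_{d+1}(\div,K;\mathbb S)$; the Bernstein-basis bookkeeping above handles it. The hypothesis $d\geq 3$ enters exactly once, in the degree-versus-boundary-codimension comparison that eliminates $\boldsymbol{\tau}_2$.
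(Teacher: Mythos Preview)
Your proof is correct and follows essentially the same strategy as the paper's: count, invoke Lemma~\ref{lem:unisoldgeq3} to get $(\boldsymbol{\tau}\boldsymbol{n})|_{\partial K}=0$, eliminate the bubble part $\boldsymbol{\tau}_2$, and finish off the $\mathbb P_2$ part via the interior DoF. The only real difference is that the paper's final step is a one-line assertion (implicitly relying on the Hu bubble characterization $\mathbb P_k(K;\mathbb S)\cap H_0(\div,K;\mathbb S)=\Oplus_{i<j}\lambda_i\lambda_j\mathbb P_{k-2}(K)\,\boldsymbol t_{ij}\otimes\boldsymbol t_{ij}$ cited elsewhere), whereas you carry out the $k=2$ case explicitly via the Bernstein expansion and the linear-independence argument for $\{\boldsymbol t_{ij}\otimes\boldsymbol t_{ij}\}_{i<j}$; this makes your argument more self-contained but not conceptually different.
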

\begin{proof}
The number of DoFs \eqref{lowdivShddof} is
$$
\frac{1}{2}d(d+1)^2+\frac{1}{4}d^2(d+1)(d-1) +d(d+1) +\frac{1}{2}(d+1)d(d-1)+\frac{1}{2}d(d+1)=\frac{1}{4}d(d+1)(d^2+3d+6),
$$
which is same as
the dimension of $\boldsymbol{\Sigma}(K)$. 

Take $\boldsymbol{\tau}=\boldsymbol{\tau}_1+\boldsymbol{\tau}_2 \in \boldsymbol{\Sigma}(K)$ with $\boldsymbol{\tau}_1\in\mathbb P_{2}(K;\mathbb S)$ and $\boldsymbol{\tau}_2\in\mathbb B_{\partial K}^{nn}$, and suppose all the DoFs \eqref{lowdivShddof} vanish.
By Lemma~\ref{lem:unisoldgeq3}, $\boldsymbol{\tau}\boldsymbol{n}|_{\partial K}=0$ and $\boldsymbol{\tau}_2\boldsymbol{n}|_{\partial K}=0$. Then $\boldsymbol{\tau}_2=0$. Hence, $\boldsymbol{\tau}\in\mathbb P_{2}(K;\mathbb S)$ satisfies $\boldsymbol{\tau}\boldsymbol{n}|_{\partial K}=0$.
Finally, $\boldsymbol{\tau}=0$ holds from the vanishing DoF \eqref{lowdivShddof5}.
\end{proof}

As Lemma~\ref{lem:stressexact}, $\div:\boldsymbol{\Sigma}(K)\cap {H}_0(\div,K;\mathbb{S})\to\mathbb{P}_1(K;\mathbb{R}^d)/\mathbf{RM}$ is bijective, for each $K\in\mathcal T_h$.

Define global finite element spaces
\begin{align*}
\boldsymbol{\Sigma}_h&:=\{\boldsymbol{\tau}_h \in L^2(\Omega; \mathbb{S}): \boldsymbol{\tau}_h|_K\in \boldsymbol{\Sigma}(K)\;\; \forall\,K \in \mathcal{T}_{h}, \text{ DoFs \eqref{lowdivShddof} are single-valued}\}, \\
\boldsymbol{V}_h&:=\{\boldsymbol{q}_h\in {L}^{2}(\Omega;\mathbb{R}^{d}): \boldsymbol{q}_h|_K\in\mathbb{P}_{1}(K ; \mathbb{R}^{d})\quad \forall~K \in \mathcal{T}_{h}\}.
\end{align*}
By Lemma~\ref{lem:unisoldgeq3}, $\boldsymbol{\Sigma}_h\subset {H}(\div,\Omega; \mathbb{S})$.
Apply the same argument as in Lemma~\ref{lem:elascomplex2d} to acquire
\begin{equation}\label{eq:divonto}
\div\boldsymbol{\Sigma}_h=\boldsymbol{V}_h.
\end{equation}

The basis functions of $\boldsymbol{\Sigma}(K)$ for $d\geq3$ are given as follows:
\begin{itemize}
\item[(i)] For vertex $\texttt{v}_i$ with $i=0, 1,\ldots, d$, the basis functions related to DoF (\ref{lowdivShddof1}) are $\lambda_i^2 \mathbb S$.
\item[(ii)] For edge $e$, the basis functions related to DoF~(\ref{lowdivShddof2}) are $b_e \sym(\boldsymbol{n}_i \otimes \boldsymbol{n}_j)$ for $1\leq i\leq j\leq d-1$.
\item[(iii)] For face $F$, the basis functions related to DoF~(\ref{lowdivShddof3}) are
those of $\mathbb B_{F}^{nn}$.
\item[(iv)] For face $F$, the basis functions related to DoF~(\ref{lowdivShddof4}) are $b_e \sym(\boldsymbol{t}_e \otimes \boldsymbol{n}_F)$ for $e\in \Delta_1(F)$.
\item[(v)] The basis functions related to DoF (\ref{lowdivShddof5}) are $b_e \boldsymbol{t}_e \otimes \boldsymbol{t}_e$ for $e\in \Delta_1(K)$.
\end{itemize}

\subsection{A reduced finite element for symmetric tensors}
Now we reduce $\boldsymbol{\Sigma}(K)$ to the shape function space
\begin{align*}
\boldsymbol{\Sigma}^r(K)  :=\{\boldsymbol{\tau} \in \boldsymbol{\Sigma}(K):& \operatorname{div} \boldsymbol{\tau} \in \mathbf{RM}, \\
&(\boldsymbol n_i^{\intercal}\boldsymbol \tau\boldsymbol n_j)|_e\in\mathbb P_1(e) \textrm{ for } e\in\Delta_{1}(K), 1\leq i\leq j\leq d-1\}.
\end{align*}
It is easy to see that $\mathbb P_1(K;\mathbb S)\subset\boldsymbol{\Sigma}^r(K)$.

The DoFs for $\boldsymbol{\Sigma}^r(K)$ are given by
\begin{subequations}\label{reducedivShddof}
\begin{align}
\boldsymbol \tau (\texttt{v}) & \quad\forall~\texttt{v}\in \Delta_{0}(K), \label{reducedivShddof1}\\
(\boldsymbol  n^{\intercal}\boldsymbol \tau\boldsymbol n, q)_F & \quad\forall~q\in\mathbb P_{1}(F),  F\in\partial K,\;  \label{reducedivShddof3}\\
(\Pi_F\boldsymbol \tau\boldsymbol n, \boldsymbol q)_F & \quad\forall~\boldsymbol q\in {\rm ND}_{0}(F),  F\in\partial K. \label{reducedivShddof4}
\end{align}
\end{subequations}
\begin{lemma}
The DoFs \eqref{reducedivShddof} are unisolvent for $\boldsymbol{\Sigma}^r(K)$.
\end{lemma}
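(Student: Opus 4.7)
The plan is to verify that the number of DoFs in \eqref{reducedivShddof} equals $\dim\boldsymbol{\Sigma}^r(K)=d(d+1)^2$, and then to prove injectivity of the DoF map by reducing to Lemma~\ref{lem:unisoldgeq3} followed by the bijectivity of $\div$ on $\boldsymbol{\Sigma}(K)\cap H_0(\div,K;\mathbb{S})$ remarked right after Lemma~\ref{lem:unisoldgeq32}.

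First I would count: \eqref{reducedivShddof1} contributes $(d+1)\cdot d(d+1)/2$, \eqref{reducedivShddof3} contributes $(d+1)\cdot d$, and \eqref{reducedivShddof4} contributes $(d+1)\cdot d(d-1)/2$, summing to
\[
d(d+1)\Big[\tfrac{d+1}{2}+1+\tfrac{d-1}{2}\Big]=d(d+1)^2,
\]
matching the announced dimension of $\boldsymbol{\Sigma}^r(K)$.

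Next, suppose $\boldsymbol{\tau}\in\boldsymbol{\Sigma}^r(K)$ makes all DoFs \eqref{reducedivShddof} vanish. Vanishing of \eqref{reducedivShddof1} gives $\boldsymbol{\tau}(\texttt{v})=0$ at every $\texttt{v}\in\Delta_0(K)$. By the defining property of $\boldsymbol{\Sigma}^r(K)$, the scalar $(\boldsymbol n_i^{\intercal}\boldsymbol{\tau}\boldsymbol n_j)|_e$ lies in $\mathbb P_1(e)$ for every $e\in\Delta_1(K)$ and $1\leq i\leq j\leq d-1$; since it vanishes at the two endpoints of $e$, this linear polynomial is identically zero. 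Consequently, the edge DoFs \eqref{lowdivShddof2} of the unreduced element also vanish on $\boldsymbol{\tau}$. Combined with \eqref{reducedivShddof3} and \eqref{reducedivShddof4}, which coincide with \eqref{lowdivShddof3} and \eqref{lowdivShddof4}, all of \eqref{lowdivShddof1}--\eqref{lowdivShddof4} vanish, so Lemma~\ref{lem:unisoldgeq3} applied face by face yields $(\boldsymbol{\tau}\boldsymbol n)|_F=0$ for every $F\in\partial K$. Hence $\boldsymbol{\tau}\in\boldsymbol{\Sigma}(K)\cap H_0(\div,K;\mathbb{S})$, and the defining constraint $\div\boldsymbol{\tau}\in\mathbf{RM}$ together with the aforementioned bijectivity of $\div$ forces $\boldsymbol{\tau}=0$.

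The crux of the argument, and the only step that is not an immediate invocation of earlier results, is the observation that the $\mathbb P_1(e)$ restriction built into $\boldsymbol{\Sigma}^r(K)$ exactly compensates for the loss of the edge DoFs \eqref{lowdivShddof2}: once the vertex values are prescribed, a linear function on each edge is completely determined. After this, the unisolvency machinery already developed for $\boldsymbol{\Sigma}(K)$ carries through unchanged.
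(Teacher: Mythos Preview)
Your proof is correct and follows essentially the same approach as the paper: both count the DoFs, use the $\mathbb P_1(e)$ constraint together with the vanishing vertex values to force the edge normal-normal components to vanish, invoke Lemma~\ref{lem:unisoldgeq3} to get $(\boldsymbol{\tau}\boldsymbol n)|_{\partial K}=0$, and then use the constraint $\div\boldsymbol{\tau}\in\mathbf{RM}$ to conclude $\boldsymbol{\tau}=0$. The only cosmetic difference is in the last step: the paper shows $\|\div\boldsymbol{\tau}\|_{0,K}^2=0$ by integration by parts, deduces that the interior DoF \eqref{lowdivShddof5} vanishes, and applies Lemma~\ref{lem:unisoldgeq32}; you instead invoke the bijectivity of $\div:\mathring{\boldsymbol{\Sigma}}(K)\to\mathbb P_1(K;\mathbb R^d)/\mathbf{RM}$ directly, which amounts to the same thing. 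One small remark: the equality $\dim\boldsymbol{\Sigma}^r(K)=d(d+1)^2$ is not actually ``announced'' in the paper prior to this lemma; the paper only asserts the inequality $d(d+1)^2\leq\dim\boldsymbol{\Sigma}^r(K)$ and then deduces equality from injectivity of the DoF map, so your dimension count plays the same role but you should phrase it as establishing the upper bound rather than matching a known value.
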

\begin{proof}
The number of DoFs \eqref{reducedivShddof} is
$$
\frac{1}{2}d(d+1)^2 +d(d+1) +\frac{1}{2}(d+1)d(d-1)=d(d+1)^2\leq \dim\boldsymbol{\Sigma}^r(K).
$$

Take $\boldsymbol{\tau} \in \boldsymbol{\Sigma}^r(K)$, and suppose all the DoFs \eqref{reducedivShddof} vanish. The vanishing DoF \eqref{reducedivShddof1} indicates $(\boldsymbol n_i^{\intercal}\boldsymbol \tau\boldsymbol n_j)|_e=0$ for $e\in\Delta_{1}(K)$, $1\leq i\leq j\leq d-1$. Then apply Lemma~\ref{lem:unisoldgeq3} to get $\boldsymbol{\tau}\boldsymbol{n}|_{\partial K}=0$. By $\div \boldsymbol{\tau} \in \mathbf{RM}$, we get from the integration by parts that $\|\div \boldsymbol{\tau}\|_{0,K}^2=0$. Hence, $\div \boldsymbol{\tau}=0$, which implies $(\boldsymbol{\tau}, \boldsymbol{q})_{K}=0$ for $\boldsymbol{q}\in\mathbb P_{0}(K;\mathbb S)$. Finally, employ Lemma~\ref{lem:unisoldgeq32} to conclude $\boldsymbol{\tau}=0$.
\end{proof}

The basis functions of $\boldsymbol{\Sigma}^r(K)$ for $d\geq3$ are given as follows:
\begin{itemize}
\item[(i)] For vertex $\texttt{v}_i$ with $i=0, 1,\ldots, d$, the basis functions related to DoF (\ref{reducedivShddof1}) are $\lambda_i \mathbb S$.
\item[(ii)] For face $F$, the basis functions related to DoF~(\ref{reducedivShddof3}) are
those of $\mathbb B_{F}^{nn}$.
\item[(iii)] For face $F$, the basis functions related to DoF~(\ref{reducedivShddof4}) 
form the space
\begin{align*}
\mathbb B_{F}^{tn}=\big\{\boldsymbol{\tau}\in \mathbb B_{d+1}(\div, K;\mathbb S)\oplus&\Oplus_{e\in \Delta_1(F)}b_e \sym(\boldsymbol{t}_e \otimes \boldsymbol{n}_F): \\
&\quad\quad\quad\textrm{DoFs \eqref{HdivSBDMfemdof5}-\eqref{HdivSBDMfemdof6} vanish} \big\}.
\end{align*}
Again, a basis of $\mathbb B_{F}^{tn}$ can be derived by solving some local linear systems with the same coefficient matrix of order $\dim\mathbb P_{d-1}(K)={2d-1\choose d}$.
\end{itemize}

\begin{remark}\rm
The local dimension of the first-order $H(\div)$-conforming finite element for symmetric tensors in \cite{HuZhang2016lower} is also $d(d+1)^2$. Their finite element space is defined by enriching the symmetric tensor-valued linear element space with both the $(d+1)$-order normal-normal face bubble space and the $(d+1)$-order tangential-normal face bubble space, while our finite element space is enriched by the $(d+1)$-order normal-normal face bubble space and the second order tangential-normal face bubble space. Thereby, our reduced finite element for symmetric tensors is simpler than the first order one in \cite{HuZhang2016lower} in the sense that on each face $F\in\partial K$, $(\Pi_F\boldsymbol \tau\boldsymbol n)|_F\in\mathbb P_2(F;\mathbb R^{d-1})$ for $\boldsymbol{\tau}\in\boldsymbol{\Sigma}^r(K)$, while $(\Pi_F\boldsymbol \tau\boldsymbol n)|_F\in\mathbb P_{d+1}(F;\mathbb R^{d-1})$ for the first order symmetric tensor $\boldsymbol \tau$ in \cite{HuZhang2016lower}.
Especially, we present the explicit expressions of the basis functions of $\boldsymbol{\Sigma}(K)$ and $\boldsymbol{\Sigma}^r(K)$ in terms of barycentric coordinates.
\end{remark}

Define global finite element spaces
\begin{align*}
\boldsymbol{\Sigma}_h^r&:=\{\boldsymbol{\tau}_h \in L^2(\Omega; \mathbb{S}): \boldsymbol{\tau}_h|_K\in \boldsymbol{\Sigma}^r(K)\;\; \forall\,K \in \mathcal{T}_{h}, \text{ DoFs \eqref{reducedivShddof} are single-valued}\}, \\
\boldsymbol{V}_h^r&:=\{\boldsymbol{q}_h \in {L}^2(\Omega ; \mathbb{R}^d):\boldsymbol{q}_h|_K \in \mathbf{RM} \quad \forall\,K \in \mathcal{T}_h\}.
\end{align*}
Then 
$\boldsymbol{\Sigma}_h^r\subset \boldsymbol{\Sigma}_h\subset {H}(\div,\Omega; \mathbb{S})$.

\begin{lemma}\label{lem:PiFRM}
For $F\in\partial K$, we have $\Pi_F\mathbb K\Pi_F=\mathbb K_F$ and $\Pi_F\mathbf{RM}={\rm ND}_0(F)$, where $\mathbb K_F$ is the space of skew-symmetric matrices on $F$.
\end{lemma}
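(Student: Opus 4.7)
The plan is to exploit the fact that $\Pi_F = \boldsymbol I - \boldsymbol n_F\boldsymbol n_F^{\intercal}$ is symmetric and idempotent, and to interpret $\mathbb K_F$ as the subspace of $\mathbb K$ consisting of skew matrices $\boldsymbol B$ with $\boldsymbol B \boldsymbol n_F=0$ (equivalently, $\boldsymbol n_F^{\intercal}\boldsymbol B=0$). Under this identification, the first equality is a linear-algebra fact about compressing a skew form to a codimension-one subspace, and the second follows by writing a rigid motion as an affine field and tracking what survives under $\Pi_F$.

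For $\Pi_F\mathbb K\Pi_F=\mathbb K_F$, I would first check the inclusion $\Pi_F\mathbb K\Pi_F\subseteq\mathbb K_F$: if $\boldsymbol A\in\mathbb K$, then $(\Pi_F\boldsymbol A\Pi_F)^{\intercal}=\Pi_F^{\intercal}\boldsymbol A^{\intercal}\Pi_F^{\intercal}=-\Pi_F\boldsymbol A\Pi_F$, and the presence of the outer $\Pi_F$ forces $(\Pi_F\boldsymbol A\Pi_F)\boldsymbol n_F=0$. For the reverse inclusion, given $\boldsymbol B\in\mathbb K_F$ I would observe that $\boldsymbol B\boldsymbol n_F=0$ implies $\boldsymbol B\Pi_F=\boldsymbol B$, and likewise $\boldsymbol n_F^{\intercal}\boldsymbol B=0$ implies $\Pi_F\boldsymbol B=\boldsymbol B$, so $\boldsymbol B=\Pi_F\boldsymbol B\Pi_F$ and $\boldsymbol B$ lies in the image.

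For $\Pi_F\mathbf{RM}={\rm ND}_0(F)$, I would write a generic element of $\mathbf{RM}$ as $\boldsymbol a+\boldsymbol A\boldsymbol x$ with $\boldsymbol a\in\mathbb R^d$ and $\boldsymbol A\in\mathbb K$, and use that $\boldsymbol n_F\cdot\boldsymbol x$ is a constant $c_F$ on the affine hyperplane $F$. Decomposing $\boldsymbol x=\Pi_F\boldsymbol x+c_F\boldsymbol n_F$ gives
\begin{equation*}
\Pi_F(\boldsymbol a+\boldsymbol A\boldsymbol x)=\bigl(\Pi_F\boldsymbol a+c_F\,\Pi_F\boldsymbol A\boldsymbol n_F\bigr)+(\Pi_F\boldsymbol A\Pi_F)\boldsymbol x\quad\text{on }F,
\end{equation*}
whose constant part is tangent to $F$ and whose linear part lies in $\mathbb K_F\boldsymbol x$ by the first statement, so $\Pi_F\mathbf{RM}\subseteq{\rm ND}_0(F)$. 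For surjectivity, given any $\boldsymbol b+\boldsymbol B\boldsymbol x\in{\rm ND}_0(F)$ with $\boldsymbol b$ tangent to $F$ and $\boldsymbol B\in\mathbb K_F$, the rigid motion $\boldsymbol b+\boldsymbol B\boldsymbol x\in\mathbf{RM}$ is already annihilated by the projections in the normal direction (since $\boldsymbol B\boldsymbol n_F=0$ and $\boldsymbol n_F^{\intercal}\boldsymbol b=0$), so $\Pi_F$ fixes it on $F$.

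The only mildly delicate point is keeping the affine nature of $F$ straight: the identity $\boldsymbol x=\Pi_F\boldsymbol x+c_F\boldsymbol n_F$ on $F$ with $c_F$ independent of $\boldsymbol x\in F$ is what allows the linear part of the rigid motion to collapse to a genuine rigid motion on $F$, rather than introducing spurious affine drift. Once this is carefully recorded, both equalities reduce to short algebraic manipulations with $\Pi_F$ and skew-symmetry, with no further obstacle.
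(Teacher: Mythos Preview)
Your proof is correct and follows essentially the same route as the paper: both decompose $\mathbf{RM}=\mathbb R^d\oplus\mathbb K\boldsymbol x$, use that $\boldsymbol x\cdot\boldsymbol n_F$ is constant on $F$, and reduce the second identity to the first. The only cosmetic difference is in establishing $\Pi_F\mathbb K\Pi_F=\mathbb K_F$: the paper expands $\mathbb K$ in the orthonormal frame $\{\boldsymbol t_1,\ldots,\boldsymbol t_{d-1},\boldsymbol n_F\}$ and reads off the image, while you argue coordinate-free via $\Pi_F\boldsymbol B=\boldsymbol B=\boldsymbol B\Pi_F$ for $\boldsymbol B\in\mathbb K_F$.
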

\begin{proof}
Since
$$
\mathbb K={\rm span}\{\skw(\boldsymbol{t}_i\otimes\boldsymbol{t}_j), 1\leq i<j\leq d-1\}\oplus {\rm span}\{\skw(\boldsymbol{t}_i\otimes\boldsymbol{n}), 1\leq i\leq d-1\},
$$
it follows
$$
\Pi_F\mathbb K\Pi_F={\rm span}\{\skw(\boldsymbol{t}_i\otimes\boldsymbol{t}_j), 1\leq i<j\leq d-1\}=\mathbb K_F.
$$
By 
$\mathbf{RM}=\mathbb P_{0}(K;\mathbb R^d) \oplus \mathbb K\boldsymbol x$ and $(\boldsymbol x\cdot\boldsymbol n)|_F\in\mathbb P_{0}(F)$,
$$
\Pi_F\mathbf{RM}=\mathbb P_{0}(F;\mathbb R^{d-1}) + \Pi_F\mathbb K\boldsymbol x=\mathbb P_{0}(F;\mathbb R^{d-1}) + \Pi_F\mathbb K\Pi_F(\Pi_F\boldsymbol x).
$$
Therefore $\Pi_F\mathbf{RM}={\rm ND}_0(F)$ holds from $\Pi_F\mathbb K\Pi_F=\mathbb K_F$.
\end{proof}

\begin{lemma}
We have
$$
\div\boldsymbol{\Sigma}_h^r=\boldsymbol{V}_h^r.
$$	
\end{lemma}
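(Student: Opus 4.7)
The plan is to mirror the surjectivity argument used for the unreduced complex (cf.~Lemma~\ref{lem:elascomplex2d}), but with $\mathbf{RM}$ replacing $\mathbb P_1(K;\mathbb R^d)$ as the element-wise test space. The inclusion $\div\boldsymbol{\Sigma}_h^r\subseteq\boldsymbol{V}_h^r$ is immediate from the defining local condition $\div\boldsymbol{\tau}|_K\in\mathbf{RM}$. For the reverse inclusion, pick $\boldsymbol{p}_h\in\boldsymbol{V}_h^r\subset L^2(\Omega;\mathbb{R}^d)$ and produce a lifting $\boldsymbol{\tau}\in H^1(\Omega;\mathbb{S})$ with $\div\boldsymbol{\tau}=\boldsymbol{p}_h$ (a standard right inverse via a linear elasticity problem). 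Then define $\boldsymbol{\tau}_h\in\boldsymbol{\Sigma}_h^r$ element by element through its DoFs \eqref{reducedivShddof}: set $\boldsymbol{\tau}_h(\texttt{v})=0$ at every vertex, and match the face DoFs \eqref{reducedivShddof3}--\eqref{reducedivShddof4} against $\boldsymbol{\tau}$. The vertex values are trivially single-valued, and the face DoFs are single-valued thanks to the $H^1$-regularity of $\boldsymbol{\tau}$, so $\boldsymbol{\tau}_h$ is a legitimate member of $\boldsymbol{\Sigma}_h^r$.

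To verify $\div\boldsymbol{\tau}_h=\boldsymbol{p}_h$, observe that both sides lie in $\mathbf{RM}$ element by element, so it suffices to show $(\boldsymbol{p}_h-\div\boldsymbol{\tau}_h,\boldsymbol{q})_K=0$ for all $\boldsymbol{q}\in\mathbf{RM}$ and all $K\in\mathcal{T}_h$. Rewriting the integrand as $(\div(\boldsymbol{\tau}-\boldsymbol{\tau}_h),\boldsymbol{q})_K$ and integrating by parts kills the volume term since $\boldsymbol{\varepsilon}(\boldsymbol{q})=0$, leaving the boundary pairing $\sum_{F\in\partial K}\int_F(\boldsymbol{\tau}-\boldsymbol{\tau}_h)\boldsymbol{n}\cdot\boldsymbol{q}\,\mathrm{ds}$. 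On each face $F$, decompose both $\boldsymbol{q}=\Pi_F\boldsymbol{q}+(\boldsymbol{n}\cdot\boldsymbol{q})\boldsymbol{n}$ and $(\boldsymbol{\tau}-\boldsymbol{\tau}_h)\boldsymbol{n}$ into their tangential and normal components; the cross terms vanish because $\Pi_F(\cdot)$ is orthogonal to $\boldsymbol{n}$, so only a tangential pairing and a normal-normal pairing survive. Lemma~\ref{lem:PiFRM} identifies $\Pi_F\boldsymbol{q}\in{\rm ND}_0(F)$, while the decomposition $\mathbf{RM}=\mathbb{P}_0(K;\mathbb{R}^d)\oplus\mathbb{K}\boldsymbol{x}$ yields $\boldsymbol{n}\cdot\boldsymbol{q}|_F\in\mathbb{P}_1(F)$. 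Hence both surviving integrals are killed exactly by the DoFs \eqref{reducedivShddof3}--\eqref{reducedivShddof4} used to define $\boldsymbol{\tau}_h$, giving $\div\boldsymbol{\tau}_h|_K=\boldsymbol{p}_h|_K$ as required.

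The only subtle point, and the hard part of the design, is the precise matching between $\mathbf{RM}$ restricted to a face and the reduced face DoFs: the tangential trace of a rigid motion on $F$ must land in ${\rm ND}_0(F)$ (rather than something larger), and its normal component must be at most linear in $F$. This is exactly what Lemma~\ref{lem:PiFRM} together with the inclusion $\boldsymbol{n}\cdot(\mathbb{K}\boldsymbol{x})|_F\subset\mathbb{P}_1(F)$ guarantees, and it is precisely the property that the reduced DoFs \eqref{reducedivShddof3}--\eqref{reducedivShddof4} are designed to test against. Once this matching is in place, the remainder of the proof is a routine integration-by-parts computation.
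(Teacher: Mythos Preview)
Your proof is correct and follows essentially the same approach as the paper: lift $\boldsymbol{p}_h$ to $\boldsymbol{\tau}\in H^1(\Omega;\mathbb{S})$, define $\boldsymbol{\tau}_h\in\boldsymbol{\Sigma}_h^r$ by zeroing vertex DoFs and matching face DoFs \eqref{reducedivShddof3}--\eqref{reducedivShddof4} against $\boldsymbol{\tau}$, then use Lemma~\ref{lem:PiFRM} together with $\boldsymbol{n}\cdot\boldsymbol{q}|_F\in\mathbb P_1(F)$ to conclude $(\div(\boldsymbol{\tau}-\boldsymbol{\tau}_h),\boldsymbol{q})_K=0$ for $\boldsymbol{q}\in\mathbf{RM}$. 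Your write-up spells out the integration-by-parts and tangential/normal decomposition more explicitly than the paper does, but the argument is the same.
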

\begin{proof}
Since $\div\boldsymbol{\Sigma}_h^r\subseteq\boldsymbol{V}_h^r$, it suffices to prove the other side.
For any $\boldsymbol{p}_h \in \boldsymbol{V}_h^r$, there exists $\boldsymbol{\tau} \in {H}^1
(\Omega; \mathbb{S})$ such that $\div\boldsymbol{\tau} = \boldsymbol{p}_h.$
Let
$\boldsymbol{\tau}_h \in \boldsymbol{\Sigma}_h^r$ be determined by
\begin{align*}
\boldsymbol{\tau}_h(\texttt{v})&=0  \qquad\qquad\quad\; \forall~\texttt{v}\in \Delta_0(\mathcal{T}_h),\\
(\boldsymbol{n}^{\intercal}\boldsymbol{\tau}_h \boldsymbol{n},q)_F&=(\boldsymbol{n}^{\intercal}\boldsymbol{\tau} \boldsymbol{n},q)_F \quad \forall~q\in \mathbb{P}_{1}(F), F\in \Delta_{d-1}(\mathcal{T}_h),\\
(\Pi_F\boldsymbol{\tau}_h \boldsymbol{n}, \boldsymbol{q})_F&=	(\Pi_F\boldsymbol{\tau} \boldsymbol{n}, \boldsymbol{q})_F \;\;\, \forall~\boldsymbol{q}\in {\rm ND}_0(F), F\in \Delta_{d-1}(\mathcal{T}_h).
\end{align*}
By Lemma~\ref{lem:PiFRM}, $\Pi_F\mathbf{RM}={\rm ND}_0(F)$, hence
$$
\left(\boldsymbol{p}_h-\div \boldsymbol{\tau}_h, \boldsymbol{q}\right)_K=\left(\div\left(\boldsymbol{\tau}-\boldsymbol{\tau}_h\right), \boldsymbol{q}\right)_K=0 \quad \forall~ \boldsymbol{q} \in \mathbf{RM}.
$$
Therefore $\div \boldsymbol{\tau}_h=\boldsymbol{p}_h$.
\end{proof}

\section{A mixed finite element method for linear elasticity}\label{sec:mfemelas}

In this section, we will apply the previously constructed ${H}(\div; \mathbb{S})$-conforming finite elements in Section~\ref{sec:femelascomplex} and Section~\ref{sec:divSfem} to advance a robust mixed finite element method for the linear elasticity problem.

\subsection{Linear elasticity problem and mixed formulation}
Consider the linear elasticity
problem under the load $\boldsymbol{f} \in {L}^{2}(\Omega; \mathbb{R}^{d})$ with $d\geq2$
	\begin{equation}\label{linearelasticity}
		\left\{\begin{aligned}
			\mathcal{A} \boldsymbol{\sigma}-\boldsymbol{\varepsilon}(\boldsymbol{u})&=0 \qquad \text { in } \Omega, \\
			\operatorname{div} \boldsymbol{\sigma}&=-\boldsymbol{f} \quad \text { in } \Omega, \\
			\boldsymbol{u}&=0  \qquad\text { on } \partial \Omega,
		\end{aligned}\right.
	\end{equation}
where $\boldsymbol{u}: \Omega \mapsto \mathbb{R}^d$ and $\boldsymbol{\sigma} : \Omega \mapsto \mathbb{S}$ are the displacement and stress respectively, and
the compliance tensor $\mathcal{A} : \mathbb{S} \mapsto \mathbb{S}$ is defined by
	$$
	\mathcal{A} \boldsymbol{\sigma}:=\frac{1}{2 \mu}\left(\boldsymbol{\sigma} -\frac{\lambda}{d\lambda+2 \mu}(\operatorname{tr} \boldsymbol{\sigma}) \boldsymbol{I}\right).
	$$
	Here $\boldsymbol{I}$ is the identity tensor, $\tr$ is the trace operator, and the positive constants $\lambda$ and $\mu$ are the Lam\'{e} coefficients.

The mixed formulation of the linear elasticity problem \eqref{linearelasticity} based on the  Hellinger-Reissner principle is to find $(\boldsymbol{\sigma}, \boldsymbol{u}) \in  {H}(\div, \Omega; \mathbb{S}) \times  {L}^{2}(\Omega; \mathbb{R}^{d})$ such that
	\begin{align}
		\label{HJ1}	a\left(\boldsymbol{\sigma}, \boldsymbol{\tau}\right)+b\left(\boldsymbol{\tau},\boldsymbol{u}\right) &=0  \qquad\qquad\;\;\; \forall~\boldsymbol{\tau} \in  {H}(\div, \Omega; \mathbb{S}), \\
		\label{HJ2}	b\left(\boldsymbol{\sigma},\boldsymbol{v} \right) &=-(\boldsymbol{f}, \boldsymbol{v})  \qquad \forall~\boldsymbol{v} \in {L}^{2}(\Omega; \mathbb{R}^{d}),
	\end{align}
	where the bilinear forms
	\begin{align*}
		a\left(\boldsymbol{\sigma}, \boldsymbol{\tau}\right):=(\mathcal{A}\boldsymbol{\sigma}, \boldsymbol{\tau}),\quad 
		b\left(\boldsymbol{\tau},\boldsymbol{v} \right):=(\div\boldsymbol{\tau},  \boldsymbol{v}).
	\end{align*}
	
	Clearly we have the boundedness
	\begin{align*}
		a\left(\boldsymbol{\sigma}, \boldsymbol{\tau}\right)&\leq \|\boldsymbol{\sigma}\|_{{H}(\div)}\|\boldsymbol{\tau}\|_{{H}(\div)} \quad \forall~\boldsymbol{\sigma}, \boldsymbol{\tau} \in  {H}(\div, \Omega; \mathbb{S}),\\
		b\left(\boldsymbol{\tau},\boldsymbol{v} \right)&\leq \|\boldsymbol{\tau}\|_{{H}(\div)}\|\boldsymbol{v}\|_0 \qquad\quad\, \forall~\boldsymbol{\tau} \in  {H}(\div, \Omega; \mathbb{S}), \boldsymbol{v} \in {L}^{2}(\Omega; \mathbb{R}^{d}).
	\end{align*}
The mixed formulation \eqref{HJ1}-\eqref{HJ2} is well-posed
\cite{BoffiBrezziFortin2013}.

	
\subsection{Mixed finite element method}\label{sec32}
	Based on the mixed formulation (\ref{HJ1})-(\ref{HJ2}), we propose a mixed finite element method for the linear elasticity problem (\ref{linearelasticity})
	as follows: find
	$(\boldsymbol{\sigma}_h, \boldsymbol{u}_h) \in \boldsymbol{\Sigma}_{h} \times \boldsymbol{V}_h $
	such that
	\begin{align}
		\label{distype1}	a(\boldsymbol{\sigma}_{h}, \boldsymbol{\tau}_{h})+b(\boldsymbol{\tau}_{h},\boldsymbol{u}_h) &=0 \qquad\qquad\; \forall~\boldsymbol{\tau}_{h} \in \boldsymbol{\Sigma}_{h}, \\
		\label{distype2}	b(\boldsymbol{\sigma}_{h},
		\boldsymbol{v}_h ) &=-(\boldsymbol{f}, \boldsymbol{v}_h) \quad \forall~\boldsymbol{v}_{h} \in \boldsymbol{V}_h.
	\end{align}
Thanks to $\div\boldsymbol{\Sigma}_{h}=\boldsymbol{V}_h$ from the discrete elasticity complex \eqref{disstresscomplex} and \eqref{eq:divonto}, equation~\eqref{distype2} is equivalent to
\begin{equation}\label{eq:divsigmah}
\div\boldsymbol{\sigma}_{h}=-Q_h\boldsymbol{f},
\end{equation}
where $Q_h$ is the $L^2$ orthogonal projector from ${L}^2(\Omega; \mathbb{R}^d)$ onto $\boldsymbol{V}_h$.
Applying $\div\boldsymbol{\Sigma}_{h}=\boldsymbol{V}_h$ again, we have
\begin{equation}\label{buortho}
b(\boldsymbol{\tau}_h, \boldsymbol{v}-Q_h\boldsymbol{v})=0 \quad \forall~\boldsymbol{\tau}_h\in\boldsymbol{\Sigma}_h, \boldsymbol{v}\in{L}^2(\Omega;\mathbb{R}^d).
\end{equation}

To show the well-posedness of the mixed finite element method \eqref{distype1}-\eqref{distype2}, we introduce some interpolation operators. Using the average technique, define an $H^1$-bounded interpolation operator $I_h: {H}^{1}(\Omega; \mathbb{S})\to\boldsymbol{\Sigma}_{h}$ by
\begin{align*}
(I_h\boldsymbol{\tau})(\texttt{v})&=\frac{1}{\#\mathcal{T}_{\texttt{v}}} \sum_{K\in\mathcal{T}_{\texttt{v}}}(Q_K^2\boldsymbol{\tau})(\texttt{v}) \qquad\qquad\quad\forall~\texttt{v}\in\Delta_0(\mathcal{T}_{h}),\\
(\boldsymbol{n}_i^{\intercal}(I_h\boldsymbol{\tau})\boldsymbol{n}_j, q)_e &= \frac{1}{\#\mathcal{T}_{e}} \sum_{K\in\mathcal{T}_{e}}(\boldsymbol{n}_i^{\intercal}(Q_K^2\boldsymbol{\tau})\boldsymbol{n}_j, q)_e \quad\;\;\forall~q\in \mathbb{P}_0(e), e\in\Delta_1(\mathcal{T}_{h}), \\
&\qquad\qquad\qquad\qquad\qquad\qquad\qquad\quad\;\;\, 1\leq i\leq j\leq d-1, \textrm{ if } d\geq3,\\
(\boldsymbol{n}^{\intercal}(I_h\boldsymbol{\tau})\boldsymbol{n}, q)_F &= (\boldsymbol{n}^{\intercal}\boldsymbol{\tau}\boldsymbol{n}, q)_F \qquad\qquad\qquad\quad\quad\;\;\forall~q\in \mathbb{P}_1(F), F\in\Delta_{d-1}(\mathcal{T}_{h}),\\
(\Pi_F(I_h\boldsymbol{\tau})\boldsymbol{n}, \boldsymbol{q})_F &= (\Pi_F\boldsymbol{\tau}\boldsymbol{n}, \boldsymbol{q})_F \qquad\qquad\quad\qquad\quad\,\forall~\boldsymbol q\in {\rm ND}_{0}(F), F\in\Delta_{d-1}(\mathcal{T}_{h}),\\
(I_h\boldsymbol{\tau}, \boldsymbol{q})_K &= (\boldsymbol{\tau}, \boldsymbol{q})_K  \qquad\qquad\qquad\qquad\qquad\,\forall~\boldsymbol{q}\in \mathbb{P}_0(K;\mathbb{S}), K\in\mathcal{T}_{h},
\end{align*}
where $\mathcal{T}_{\delta}$ and $\mathcal{T}_{e}$ are the sets of the simplices in $\mathcal{T}_h$ sharing the common vertex $\delta$ and edge $e$ respectively. 
Following the argument in \cite[Section~2.3]{HuangHuang2011}, for $1\leq s\leq 3$ we have the estimate
\begin{equation}\label{eq:Iherrorestimate}
\sum_{i=0}^s\sum_{K\in\mathcal{T}_h}h_K^{2i-2s}|\boldsymbol{\tau}-I_h\boldsymbol{\tau}|_{i,K}^2\lesssim |\boldsymbol{\tau}|_s^2\quad\forall~\boldsymbol{\tau}\in H^s(\Omega;\mathbb S).
\end{equation}
The interpolation operator $I_h$ is $H^1$-bounded, and has the optimal error estimate. Apply the integration by parts to acquire
\begin{equation}\label{IhcommuRM}
(\div(\boldsymbol{\tau}-I_h\boldsymbol{\tau}), \boldsymbol{v})_K=0\quad\forall~\boldsymbol{v}\in\mathbf{RM}, K\in\mathcal{T}_h.
\end{equation}

We also need an interpolation operator commuting with the $\div$ operator.
Define interpolation operator $I_h^b: {H}(\div, \Omega; \mathbb{S})\to\boldsymbol{\Sigma}_{h}$ as follows: for $\boldsymbol{\tau}\in{H}(\div, \Omega; \mathbb{S})$, let $I_h^b\boldsymbol{\tau}\in\boldsymbol{\Sigma}_{h}$ be determined by $(I_h^b\boldsymbol{\tau})|_K\in{H}_0(\div, K; \mathbb{S})$ for $K\in\mathcal{T}_h$, and
$$
(I_h^b\boldsymbol{\tau}, \boldsymbol{\varepsilon}(\boldsymbol{v}))_K=-(\div\boldsymbol{\tau}, \boldsymbol{v})_K\quad\forall~\boldsymbol{v}\in\mathbb{P}_1(K;\mathbb{R}^d)/\mathbf{RM}.
$$
Notice that $\boldsymbol{\varepsilon}(\mathbb{P}_1(K;\mathbb{R}^d)/\mathbf{RM})=\mathbb{P}_0(K;\mathbb{S})$.

\begin{lemma}
For $\boldsymbol{\tau}\in{H}(\div, \Omega; \mathbb{S})$ and $K\in\mathcal{T}_h$, 
we have
\begin{equation}\label{eq:Ihbboundedness}
\|I_h^b\boldsymbol{\tau}\|_{0,K}\lesssim h_K\|\div\boldsymbol{\tau}\|_{0,K},	
\end{equation}
\begin{equation}\label{eq:Ihbcommutatediv}
(\div(\boldsymbol{\tau}-I_h^b\boldsymbol{\tau}), \boldsymbol{v})_K=0\quad\forall~\boldsymbol{v}\in\mathbb{P}_1(K;\mathbb{R}^d)/\mathbf{RM}.
\end{equation}
\end{lemma}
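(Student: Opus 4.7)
The plan has two parts: first establish the commuting property \eqref{eq:Ihbcommutatediv} essentially by integration by parts, and then derive the boundedness \eqref{eq:Ihbboundedness} from a norm-equivalence/scaling argument on the finite-dimensional space $\mathring{\boldsymbol{\Sigma}}(K):=\boldsymbol{\Sigma}(K)\cap H_0(\div,K;\mathbb S)$, combined with an $L^2$-projection observation for $\div I_h^b\boldsymbol{\tau}$.

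The well-posedness of $I_h^b$ needs to be noted first: since $\boldsymbol{\varepsilon}:\mathbb P_1(K;\mathbb R^d)/\mathbf{RM}\to\mathbb P_0(K;\mathbb S)$ is a bijection, the defining equation is equivalent to prescribing DoF \eqref{lowdivShddof5} (or \eqref{sigmadof4} in $d=2$). Combined with $(I_h^b\boldsymbol\tau)|_K\in H_0(\div,K;\mathbb S)$, which kills all boundary DoFs, the bijectivity of $\div:\mathring{\boldsymbol{\Sigma}}(K)\to\mathbb P_1(K;\mathbb R^d)/\mathbf{RM}$ from Lemma~\ref{lem:stressexact} (and its higher-dimensional analog) shows that $I_h^b\boldsymbol\tau$ is uniquely determined. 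The commuting property \eqref{eq:Ihbcommutatediv} then follows from integration by parts: for $\boldsymbol v\in\mathbb P_1(K;\mathbb R^d)/\mathbf{RM}$,
\[
(\div I_h^b\boldsymbol\tau,\boldsymbol v)_K=-(I_h^b\boldsymbol\tau,\boldsymbol\varepsilon(\boldsymbol v))_K=(\div\boldsymbol\tau,\boldsymbol v)_K,
\]
where the boundary term disappears because $(I_h^b\boldsymbol\tau)|_K\in H_0(\div,K;\mathbb S)$ and the second equality uses the definition of $I_h^b$.

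For the bound \eqref{eq:Ihbboundedness}, I would proceed in two steps. First, a routine scaling argument together with the bijectivity of $\div:\mathring{\boldsymbol{\Sigma}}(K)\to\mathbb P_1(K;\mathbb R^d)/\mathbf{RM}$ gives the finite-dimensional equivalence
\[
\|\boldsymbol\sigma\|_{0,K}\lesssim h_K\,\|\div\boldsymbol\sigma\|_{0,K}\qquad\forall\,\boldsymbol\sigma\in\mathring{\boldsymbol{\Sigma}}(K),
\]
since $\boldsymbol\sigma\mapsto\|\div\boldsymbol\sigma\|_{0,K}$ is a norm on the finite-dimensional space $\mathring{\boldsymbol{\Sigma}}(\hat K)$ over the reference simplex, and the power of $h_K$ is fixed by the standard affine scaling. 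Second, I would observe that integration by parts against rigid motions $\boldsymbol w\in\mathbf{RM}$ (noting $\boldsymbol\varepsilon(\boldsymbol w)=0$) gives $(\div I_h^b\boldsymbol\tau,\boldsymbol w)_K=0$, so that together with \eqref{eq:Ihbcommutatediv} the field $\div I_h^b\boldsymbol\tau$ is exactly the $L^2(K;\mathbb R^d)$-orthogonal projection of $\div\boldsymbol\tau$ onto the space $(\mathbf{RM})^\perp\cap\mathbb P_1(K;\mathbb R^d)$; in particular $\|\div I_h^b\boldsymbol\tau\|_{0,K}\le\|\div\boldsymbol\tau\|_{0,K}$. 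Applying the scaling inequality to $\boldsymbol\sigma=I_h^b\boldsymbol\tau$ then yields \eqref{eq:Ihbboundedness}.

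The only delicate point, and the one I would double-check most carefully, is the scaling estimate on $\mathring{\boldsymbol{\Sigma}}(K)$: one must verify that the dimension of $\mathring{\boldsymbol{\Sigma}}(K)$ on the reference element coincides with $\dim(\mathbb P_1(\hat K;\mathbb R^d)/\mathbf{RM})$ so that $\div$ really is bijective (rather than merely surjective) there, which is precisely the content invoked from Lemma~\ref{lem:stressexact} and its extension following Lemma~\ref{lem:unisoldgeq32}. Everything else is bookkeeping with affine equivalence.
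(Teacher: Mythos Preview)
Your proof of the commuting property \eqref{eq:Ihbcommutatediv} is identical to the paper's: integration by parts on $I_h^b\boldsymbol\tau$ (boundary term vanishes since $I_h^b\boldsymbol\tau\in H_0(\div,K;\mathbb S)$) followed by the defining relation.

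For the bound \eqref{eq:Ihbboundedness}, your argument is correct but genuinely different from the paper's. The paper first invokes the norm equivalence $\|I_h^b\boldsymbol\tau\|_{0,K}\eqsim\|Q_K(I_h^b\boldsymbol\tau)\|_{0,K}$ on $\mathring{\boldsymbol\Sigma}(K)$, rewrites $\|Q_K(I_h^b\boldsymbol\tau)\|_{0,K}$ as a supremum over $\boldsymbol v\in\mathbb P_1(K;\mathbb R^d)/\mathbf{RM}$ via $\boldsymbol\varepsilon(\mathbb P_1/\mathbf{RM})=\mathbb P_0(K;\mathbb S)$, substitutes the defining relation $(I_h^b\boldsymbol\tau,\boldsymbol\varepsilon(\boldsymbol v))_K=-(\div\boldsymbol\tau,\boldsymbol v)_K$, and finally applies the discrete second Korn inequality $\|\boldsymbol v\|_{0,K}\lesssim h_K\|\boldsymbol\varepsilon(\boldsymbol v)\|_{0,K}$ to extract the factor $h_K$. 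Your route instead uses the bijectivity of $\div:\mathring{\boldsymbol\Sigma}(K)\to\mathbb P_1(K;\mathbb R^d)/\mathbf{RM}$ from Lemma~\ref{lem:stressexact} to assert directly that $\|\boldsymbol\sigma\|_{0,K}\lesssim h_K\|\div\boldsymbol\sigma\|_{0,K}$ on $\mathring{\boldsymbol\Sigma}(K)$, and then bounds $\|\div I_h^b\boldsymbol\tau\|_{0,K}$ by $\|\div\boldsymbol\tau\|_{0,K}$ through the clean observation that $\div I_h^b\boldsymbol\tau$ is the $L^2$-projection of $\div\boldsymbol\tau$ onto $(\mathbf{RM})^\perp\cap\mathbb P_1(K;\mathbb R^d)$. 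Your approach is shorter and avoids the duality/Korn step; the paper's approach has the minor advantage of not needing to verify that $\div I_h^b\boldsymbol\tau$ is orthogonal to $\mathbf{RM}$. Both scaling arguments rely on the same shape-regularity reasoning (these symmetric-tensor bubble spaces are not affine equivalent in the classical sense, so in either case ``scaling argument'' really means a compactness argument over unit-diameter shape-regular simplices), so your caveat about checking the reference-element dimension count is well placed.
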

\begin{proof}
Notice that $\boldsymbol{\varepsilon}(\mathbb{P}_1(K;\mathbb{R}^d)/\mathbf{RM})=\boldsymbol{\varepsilon}({\mathbb{P}}_1(K;\mathbb{R}^d))=\mathbb{P}_0(K;\mathbb{S})$.
By the scaling argument and the second Korn's inequality \cite[Remark 1.1]{Brenner2004},
\begin{align*}
\|I_h^b\boldsymbol{\tau}\|_{0,K}&\eqsim \|Q_K(I_h^b\boldsymbol{\tau})\|_{0,K}=\sup_{\boldsymbol{v}\in\mathbb{P}_1(K;\mathbb{R}^d)/\boldsymbol{RM}}\frac{(I_h^b\boldsymbol{\tau}, \boldsymbol{\varepsilon}(\boldsymbol{v}))_K}{\|\boldsymbol{\varepsilon}(\boldsymbol{v})\|_{0,K}}=\sup_{\boldsymbol{v}\in\mathbb{P}_1(K;\mathbb{R}^d)/\boldsymbol{RM}}\frac{-(\div\boldsymbol{\tau}, \boldsymbol{v})_K}{\|\boldsymbol{\varepsilon}(\boldsymbol{v})\|_{0,K}} \\
&\leq \sup_{\boldsymbol{v}\in\mathbb{P}_1(K;\mathbb{R}^d)/\boldsymbol{RM}}\frac{\|\div\boldsymbol{\tau}\|_{0,K}\|\boldsymbol{v}\|_{0,K}}{\|\boldsymbol{\varepsilon}(\boldsymbol{v})\|_{0,K}} \lesssim h_K\|\div\boldsymbol{\tau}\|_{0,K}.
\end{align*}
Hence the boundedness \eqref{eq:Ihbboundedness} holds.

It follows from the integration by parts and the definition of $I_h^b$ that
$$
(\div(\boldsymbol{\tau}-I_h^b\boldsymbol{\tau}), \boldsymbol{v})_K=(\div\boldsymbol{\tau}, \boldsymbol{v})_K + (I_h^b\boldsymbol{\tau}, \boldsymbol{\varepsilon}(\boldsymbol{v}))_K=0
$$
for $\boldsymbol{v}\in\mathbb{P}_1(K;\mathbb{R}^d)/\mathbf{RM}$, which is exactly \eqref{eq:Ihbcommutatediv}.
\end{proof}

Combining $I_h$ and $I_h^b$, we define interpolation operator $\Pi_h: {H}^{1}(\Omega; \mathbb{S})\to\boldsymbol{\Sigma}_{h}$ as $\Pi_h\boldsymbol{\tau}:=I_h\boldsymbol{\tau}+I_h^b(\boldsymbol{\tau}-I_h\boldsymbol{\tau})$.

\begin{lemma}
We have
\begin{equation}\label{eq:Piherrorestimate}
\sum_{i=0}^s\sum_{K\in\mathcal{T}_h}h_K^{2i-2s}|\boldsymbol{\tau}-\Pi_h\boldsymbol{\tau}|_{i,K}^2\lesssim |\boldsymbol{\tau}|_s^2\quad\forall~\boldsymbol{\tau}\in H^s(\Omega;\mathbb S) \textrm{ with } 1\leq s\leq 3,
\end{equation}
\begin{equation}\label{eq:Pihbcommutatediv}
\div(\Pi_h\boldsymbol{\tau})=Q_h(\div\boldsymbol{\tau}) \quad\forall~\boldsymbol{\tau}\in{H}^{1}(\Omega; \mathbb{S}).
\end{equation}
\end{lemma}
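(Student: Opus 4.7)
The plan is to treat the two assertions separately: first establish the commuting diagram property \eqref{eq:Pihbcommutatediv}, which is algebraic and reduces to testing against the right polynomial space on each element, and then derive the error estimate \eqref{eq:Piherrorestimate} by inserting $I_h\boldsymbol{\tau}$ as an intermediate and invoking the boundedness \eqref{eq:Ihbboundedness} together with an inverse estimate.

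For \eqref{eq:Pihbcommutatediv}, since $\Pi_h\boldsymbol{\tau}\in\boldsymbol{\Sigma}_h$, its divergence lies in $\boldsymbol{V}_h$, so on each $K\in\mathcal T_h$ we have $\div(\Pi_h\boldsymbol{\tau})|_K\in\mathbb P_1(K;\mathbb R^d)$. Hence it suffices to verify that $(\div(\Pi_h\boldsymbol{\tau}-\boldsymbol{\tau}),\boldsymbol{v})_K=0$ for every $\boldsymbol{v}\in\mathbb P_1(K;\mathbb R^d)$. I would split $\mathbb P_1(K;\mathbb R^d)=\mathbf{RM}\oplus(\mathbb P_1(K;\mathbb R^d)/\mathbf{RM})$. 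On the rigid motion part, I would use that $I_h^b(\boldsymbol{\tau}-I_h\boldsymbol{\tau})|_K\in H_0(\div,K;\mathbb S)$, so integration by parts plus $\boldsymbol\varepsilon(\mathbf{RM})=0$ gives $(\div I_h^b(\boldsymbol{\tau}-I_h\boldsymbol{\tau}),\boldsymbol{v})_K=0$, while \eqref{IhcommuRM} gives $(\div(\boldsymbol{\tau}-I_h\boldsymbol{\tau}),\boldsymbol{v})_K=0$, so the two combine to the desired orthogonality. On the complementary part $\mathbb P_1(K;\mathbb R^d)/\mathbf{RM}$, I would apply \eqref{eq:Ihbcommutatediv} directly to $\boldsymbol{\tau}-I_h\boldsymbol{\tau}$, yielding $(\div(\boldsymbol{\tau}-I_h\boldsymbol{\tau}-I_h^b(\boldsymbol{\tau}-I_h\boldsymbol{\tau})),\boldsymbol{v})_K=0$, i.e. $(\div(\boldsymbol{\tau}-\Pi_h\boldsymbol{\tau}),\boldsymbol{v})_K=0$.

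For \eqref{eq:Piherrorestimate}, I would write $\boldsymbol{\tau}-\Pi_h\boldsymbol{\tau}=(\boldsymbol{\tau}-I_h\boldsymbol{\tau})-I_h^b(\boldsymbol{\tau}-I_h\boldsymbol{\tau})$ and apply the triangle inequality. The first term is controlled by \eqref{eq:Iherrorestimate}. For the second, I would first use the boundedness \eqref{eq:Ihbboundedness} to obtain
\[
\|I_h^b(\boldsymbol{\tau}-I_h\boldsymbol{\tau})\|_{0,K}\lesssim h_K\|\div(\boldsymbol{\tau}-I_h\boldsymbol{\tau})\|_{0,K}\lesssim h_K|\boldsymbol{\tau}-I_h\boldsymbol{\tau}|_{1,K},
\]
and then, since $I_h^b(\boldsymbol{\tau}-I_h\boldsymbol{\tau})|_K$ is polynomial of bounded degree on a shape-regular simplex, an inverse estimate yields $|I_h^b(\boldsymbol{\tau}-I_h\boldsymbol{\tau})|_{i,K}\lesssim h_K^{1-i}|\boldsymbol{\tau}-I_h\boldsymbol{\tau}|_{1,K}$ for $0\le i\le s$. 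Multiplying by $h_K^{i-s}$ and summing over $K$, the factor collapses to $h_K^{1-s}|\boldsymbol{\tau}-I_h\boldsymbol{\tau}|_{1,K}$, which is bounded by $|\boldsymbol{\tau}|_s$ via the $s\ge 1$ case of \eqref{eq:Iherrorestimate}. Combining both contributions gives \eqref{eq:Piherrorestimate}.

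The only delicate step is the commuting argument, where one must remember that $\boldsymbol\varepsilon$ kills $\mathbf{RM}$ so that testing the boundary contribution of $I_h^b(\boldsymbol{\tau}-I_h\boldsymbol{\tau})$ against rigid motions vanishes automatically; the rest is straightforward bookkeeping with the triangle inequality and a standard inverse inequality.
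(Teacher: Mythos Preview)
Your proposal is correct and follows essentially the same approach as the paper: the commuting identity \eqref{eq:Pihbcommutatediv} is obtained by splitting $\mathbb P_1(K;\mathbb R^d)$ into $\mathbf{RM}$ and its complement and invoking \eqref{IhcommuRM} and \eqref{eq:Ihbcommutatediv} respectively, and the error estimate \eqref{eq:Piherrorestimate} is obtained from the decomposition $\boldsymbol{\tau}-\Pi_h\boldsymbol{\tau}=(\boldsymbol{\tau}-I_h\boldsymbol{\tau})-I_h^b(\boldsymbol{\tau}-I_h\boldsymbol{\tau})$, the bound \eqref{eq:Ihbboundedness}, an inverse inequality, and \eqref{eq:Iherrorestimate}. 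The only cosmetic difference is that the paper keeps $\|\div(\boldsymbol{\tau}-I_h\boldsymbol{\tau})\|_{0,K}$ rather than passing to $|\boldsymbol{\tau}-I_h\boldsymbol{\tau}|_{1,K}$ before applying \eqref{eq:Iherrorestimate}.
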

\begin{proof}
Since $\boldsymbol{\tau}-\Pi_h\boldsymbol{\tau}=\boldsymbol{\tau}-I_h\boldsymbol{\tau}-I_h^b(\boldsymbol{\tau}-I_h\boldsymbol{\tau})$, we get from the inverse inequality and \eqref{eq:Ihbboundedness} that
\begin{align*}	
|\boldsymbol{\tau}-\Pi_h\boldsymbol{\tau}|_{i,K}&\lesssim |\boldsymbol{\tau}-I_h\boldsymbol{\tau}|_{i,K}+h_K^{-i}\|I_h^b(\boldsymbol{\tau}-I_h\boldsymbol{\tau})\|_{0,K} \\
&\lesssim |\boldsymbol{\tau}-I_h\boldsymbol{\tau}|_{i,K}+h_K^{1-i}\|\div(\boldsymbol{\tau}-I_h\boldsymbol{\tau})\|_{0,K}.
\end{align*}
Then the estimate \eqref{eq:Piherrorestimate} follows from the estimate \eqref{eq:Iherrorestimate}.

Next, we prove \eqref{eq:Pihbcommutatediv}. By \eqref{eq:Ihbcommutatediv}, we have for $K\in\mathcal T_h$ and $\boldsymbol{v}\in\mathbb{P}_1(K;\mathbb{R}^d)/\mathbf{RM}$ that
$$
(\div(\boldsymbol{\tau}-\Pi_h\boldsymbol{\tau}), \boldsymbol{v})_K=(\div(\boldsymbol{\tau}-I_h\boldsymbol{\tau}-I_h^b(\boldsymbol{\tau}-I_h\boldsymbol{\tau})), \boldsymbol{v})_K=0.
$$
For $\boldsymbol{v}\in\mathbf{RM}$, using the integration by parts and \eqref{IhcommuRM},
$$
(\div(\boldsymbol{\tau}-\Pi_h\boldsymbol{\tau}), \boldsymbol{v})_K=(\div(\boldsymbol{\tau}-I_h\boldsymbol{\tau}), \boldsymbol{v})_K=0.
$$
Hence
$$
(\div(\boldsymbol{\tau}-\Pi_h\boldsymbol{\tau}), \boldsymbol{v})_K=0\quad\forall~\boldsymbol{v}\in{\mathbb{P}}_1(K;\mathbb{R}^d), K\in\mathcal T_h,
$$
which implies \eqref{eq:Pihbcommutatediv}.
\end{proof}

Combine \eqref{eq:divsigmah}, \eqref{eq:Pihbcommutatediv} and $\div\boldsymbol{\sigma}=-\boldsymbol{f}$ to produce
\begin{equation*}
\div\boldsymbol{\sigma}_{h}=-Q_h\boldsymbol{f}=\div(\Pi_h\boldsymbol{\sigma}).
\end{equation*}

We are in the position to show the discrete inf-sup condition.
\begin{lemma}
For $\boldsymbol{v}_h \in \boldsymbol{V}_h$, it holds the discrete inf-sup condition
\begin{align}\label{infsupcondition}
\left\|\boldsymbol{v}_h\right\|_{0} \lesssim \sup _{\boldsymbol{\tau}_h \in\boldsymbol{\Sigma}_h} \frac{b(\boldsymbol{\tau}_h, \boldsymbol{v}_h)}{\|\boldsymbol{\tau}_h\|_{{H}(\div)}}.
\end{align}
\end{lemma}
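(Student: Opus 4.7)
The plan is to prove the discrete inf-sup condition via a standard Fortin-style argument, leveraging the commuting interpolation $\Pi_h$ that was just constructed. Given $\boldsymbol{v}_h\in\boldsymbol{V}_h\subset L^2(\Omega;\mathbb R^d)$, the goal is to exhibit a particular $\boldsymbol{\tau}_h\in\boldsymbol{\Sigma}_h$ for which $b(\boldsymbol{\tau}_h,\boldsymbol{v}_h)\gtrsim \|\boldsymbol{v}_h\|_0^2$ and $\|\boldsymbol{\tau}_h\|_{H(\div)}\lesssim \|\boldsymbol{v}_h\|_0$; the condition then follows by taking the quotient.

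First, I would invoke the classical continuous stability for the Hellinger--Reissner formulation (equivalently, the surjectivity of $\div\colon H^1(\Omega;\mathbb S)\to L^2(\Omega;\mathbb R^d)$ with a bounded right inverse) to produce $\boldsymbol{\tau}\in H^1(\Omega;\mathbb S)$ satisfying $\div\boldsymbol{\tau}=\boldsymbol{v}_h$ and $\|\boldsymbol{\tau}\|_1\lesssim\|\boldsymbol{v}_h\|_0$. This is the only non-combinatorial ingredient and is standard for Lipschitz polytopes; in this paper's setting the well-posedness of \eqref{HJ1}--\eqref{HJ2} has already been invoked and supplies exactly this right inverse.

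Next, I would set $\boldsymbol{\tau}_h:=\Pi_h\boldsymbol{\tau}\in\boldsymbol{\Sigma}_h$ and verify the two required bounds. The commuting property \eqref{eq:Pihbcommutatediv} gives
\begin{equation*}
\div\boldsymbol{\tau}_h=Q_h(\div\boldsymbol{\tau})=Q_h\boldsymbol{v}_h=\boldsymbol{v}_h,
\end{equation*}
where the last equality uses $\boldsymbol{v}_h\in\boldsymbol{V}_h$. Therefore
\begin{equation*}
b(\boldsymbol{\tau}_h,\boldsymbol{v}_h)=(\div\boldsymbol{\tau}_h,\boldsymbol{v}_h)=\|\boldsymbol{v}_h\|_0^2.
\end{equation*}
For the norm bound, the case $s=1$ of the interpolation estimate \eqref{eq:Piherrorestimate} yields $\|\boldsymbol{\tau}-\Pi_h\boldsymbol{\tau}\|_0\lesssim h|\boldsymbol{\tau}|_1$, hence $\|\Pi_h\boldsymbol{\tau}\|_0\lesssim\|\boldsymbol{\tau}\|_1$, while $\|\div\boldsymbol{\tau}_h\|_0=\|\boldsymbol{v}_h\|_0$. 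Combining,
\begin{equation*}
\|\boldsymbol{\tau}_h\|_{H(\div)}\lesssim\|\boldsymbol{\tau}\|_1+\|\boldsymbol{v}_h\|_0\lesssim\|\boldsymbol{v}_h\|_0.
\end{equation*}

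Putting these together gives
\begin{equation*}
\sup_{\boldsymbol{\tau}_h\in\boldsymbol{\Sigma}_h}\frac{b(\boldsymbol{\tau}_h,\boldsymbol{v}_h)}{\|\boldsymbol{\tau}_h\|_{H(\div)}}\geq\frac{\|\boldsymbol{v}_h\|_0^2}{\|\Pi_h\boldsymbol{\tau}\|_{H(\div)}}\gtrsim\|\boldsymbol{v}_h\|_0,
\end{equation*}
which is exactly \eqref{infsupcondition}. The argument is essentially routine once the commuting, bounded interpolation $\Pi_h$ is in hand; the only subtle point is the availability of an $H^1$-regular stress lift of $\boldsymbol{v}_h$, and here the real work has already been done in constructing $\Pi_h$ so that it simultaneously has optimal approximation from \eqref{eq:Piherrorestimate} and the discrete commuting property \eqref{eq:Pihbcommutatediv}. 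No boundary conditions on $\boldsymbol{\tau}$ are needed because $\boldsymbol{\Sigma}_h$ is not required to have vanishing normal trace in this formulation.
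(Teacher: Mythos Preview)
Your proof is correct and follows essentially the same Fortin-style argument as the paper: lift $\boldsymbol{v}_h$ to $\boldsymbol{\tau}\in H^1(\Omega;\mathbb S)$ with $\div\boldsymbol{\tau}=\boldsymbol{v}_h$ and $\|\boldsymbol{\tau}\|_1\lesssim\|\boldsymbol{v}_h\|_0$, set $\boldsymbol{\tau}_h=\Pi_h\boldsymbol{\tau}$, and use \eqref{eq:Pihbcommutatediv} and \eqref{eq:Piherrorestimate} to conclude. The only cosmetic difference is that the paper compresses the norm bound into the single line $\|\Pi_h\boldsymbol{\tau}\|_{H(\div)}\lesssim\|\boldsymbol{\tau}\|_1$, whereas you spell out the $L^2$ and divergence parts separately.
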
	
\begin{proof}
Notice that there exists $\boldsymbol{\tau}\in{H}^{1}(\Omega; \mathbb{S})$ \cite{ArnoldHu2021} such that 
$$
\div\boldsymbol{\tau}=\boldsymbol{v}_h,\quad\|\boldsymbol{\tau}\|_1\lesssim \|\boldsymbol{v}_h\|_0.
$$
It follows from \eqref{eq:Piherrorestimate} and \eqref{eq:Pihbcommutatediv} that
$$
\div(\Pi_h\boldsymbol{\tau})=Q_h(\div\boldsymbol{\tau})=\boldsymbol{v}_h,
\quad
\|\Pi_h\boldsymbol{\tau}\|_{{H}(\div)}\lesssim \|\boldsymbol{\tau}\|_1\lesssim \|\boldsymbol{v}_h\|_0.
$$
This proves the discrete inf-sup condition \eqref{infsupcondition}.
\end{proof}

We will derive another discrete inf-sup condition for the linear form $b(\cdot, \cdot)$. To
this end, introduce mesh dependent norms
	\begin{align*}
		\|\boldsymbol{\tau}\|_{0, h}^2 & :=\|\boldsymbol{\tau}\|_0^2+\sum_{F \in \Delta_{d-1}(\mathcal{T}_h)} h_F\|\boldsymbol{\tau} \boldsymbol{n}\|_{0, F}^2,  \quad
|\boldsymbol{v}|_{1, h}^2  :=\|\boldsymbol{\varepsilon}_h(\boldsymbol{v})\|_0^2+\sum_{F \in \Delta_{d-1}(\mathcal{T}_h)} h_F^{-1}\|\llbracket\boldsymbol{v}\rrbracket\|_{0, F}^2
	\end{align*}
for $\boldsymbol{\tau} \in H^1(\mathcal{T}_h;\mathbb S)$ and $\boldsymbol{v}\in H^1(\mathcal{T}_h;\mathbb R^d)$.
Here $\boldsymbol{\varepsilon}_h$ is the elementwise symmetric gradient. Clearly we have
$$
a(\boldsymbol{\sigma}, \boldsymbol{\tau})\leq \|\boldsymbol{\sigma}\|_{0,h}\|\boldsymbol{\tau}\|_{0,h} \quad \forall~\boldsymbol{\sigma}, \boldsymbol{\tau} \in  H^1(\mathcal{T}_h;\mathbb S).
$$
Apply the integration by parts to get
\begin{equation}\label{bilinearformbrock}
b(\boldsymbol{\tau}, \boldsymbol{v})=-\sum_{K\in\mathcal{T}_h}(\boldsymbol{\tau}, \boldsymbol{\varepsilon}(\boldsymbol{v}))_K+\sum_{F \in \Delta_{d-1}(\mathcal{T}_h)}(\boldsymbol{\tau}\boldsymbol{n}, \llbracket\boldsymbol{v}\rrbracket)_{F}
\end{equation}
for $\boldsymbol{\tau}\in  H^1(\mathcal{T}_h;\mathbb S)\cap{H}(\operatorname{div}, \Omega ; \mathbb{S})$ and $\boldsymbol{v} \in H^1(\mathcal{T}_h;\mathbb R^d)$.
Thus we have
$$
b(\boldsymbol{\tau},\boldsymbol{v})\leq \|\boldsymbol{\tau}\|_{0,h}|\boldsymbol{v}|_{1,h} \quad \forall~\boldsymbol{\tau}\in  H^1(\mathcal{T}_h;\mathbb S)\cap{H}(\operatorname{div}, \Omega ; \mathbb{S}), \boldsymbol{v} \in H^1(\mathcal{T}_h;\mathbb R^d).
$$


\begin{lemma}
For $\boldsymbol{v}_h \in \boldsymbol{V}_h$, it holds the discrete inf-sup condition
		\begin{align}\label{infsupconditionhnorm}
			\left|\boldsymbol{v}_h\right|_{1,h} \lesssim \sup _{\boldsymbol{\tau}_h \in\boldsymbol{\Sigma}_h} \frac{b(\boldsymbol{\tau}_h, \boldsymbol{v}_h)}{\left\|\boldsymbol{\tau}_h\right\|_{0,h}}.
		\end{align}
\end{lemma}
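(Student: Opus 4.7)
The plan is, for each $\boldsymbol{v}_h\in\boldsymbol{V}_h$, to construct a test field $\boldsymbol{\tau}_h\in\boldsymbol{\Sigma}_h$ such that $b(\boldsymbol{\tau}_h,\boldsymbol{v}_h)\gtrsim|\boldsymbol{v}_h|_{1,h}^2$ and $\|\boldsymbol{\tau}_h\|_{0,h}\lesssim|\boldsymbol{v}_h|_{1,h}$; the inf-sup condition then follows at once. Since $\boldsymbol{v}_h|_K\in\mathbb{P}_1(K;\mathbb{R}^d)$, the strain $\boldsymbol{\varepsilon}(\boldsymbol{v}_h)|_K$ is constant and the jump $\llbracket\boldsymbol{v}_h\rrbracket|_F$ lies in $\mathbb{P}_1(F;\mathbb{R}^d)$ on every face, so each piece of $|\boldsymbol{v}_h|_{1,h}$ can in principle be matched by the low-order DoFs of $\boldsymbol{\Sigma}_h$.

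Specifically, I would define $\boldsymbol{\tau}_h\in\boldsymbol{\Sigma}_h$ by setting all vertex DoFs (and edge DoFs when $d\geq 3$) to zero and prescribing the interior, normal-normal face, and tangential-normal face DoFs via
$$(\boldsymbol{\tau}_h,\boldsymbol{q})_K=-(\boldsymbol{\varepsilon}(\boldsymbol{v}_h),\boldsymbol{q})_K,\ \ (\boldsymbol{n}^{\intercal}\boldsymbol{\tau}_h\boldsymbol{n},q)_F=h_F^{-1}(\boldsymbol{n}\cdot\llbracket\boldsymbol{v}_h\rrbracket,q)_F,\ \ (\Pi_F\boldsymbol{\tau}_h\boldsymbol{n},\boldsymbol{q})_F=h_F^{-1}(\Pi_F\llbracket\boldsymbol{v}_h\rrbracket,\boldsymbol{q})_F$$
for all $\boldsymbol{q}\in\mathbb{P}_0(K;\mathbb{S})$, $q\in\mathbb{P}_1(F)$ and $\boldsymbol{q}\in{\rm ND}_0(F)$, respectively. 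Writing $Q_F^{\rm ND}$ for the $L^2(F)$-projection onto ${\rm ND}_0(F)$ and $\boldsymbol{w}_F:=\Pi_F\llbracket\boldsymbol{v}_h\rrbracket-Q_F^{\rm ND}\Pi_F\llbracket\boldsymbol{v}_h\rrbracket$, and invoking \eqref{bilinearformbrock} together with $\boldsymbol{n}\cdot\llbracket\boldsymbol{v}_h\rrbracket\in\mathbb{P}_1(F)$, a direct computation yields
$$b(\boldsymbol{\tau}_h,\boldsymbol{v}_h)=\|\boldsymbol{\varepsilon}_h(\boldsymbol{v}_h)\|_0^2+\sum_F h_F^{-1}\bigl(\|\boldsymbol{n}\cdot\llbracket\boldsymbol{v}_h\rrbracket\|_{0,F}^2+\|Q_F^{\rm ND}\Pi_F\llbracket\boldsymbol{v}_h\rrbracket\|_{0,F}^2\bigr)+\sum_F(\Pi_F\boldsymbol{\tau}_h\boldsymbol{n},\boldsymbol{w}_F)_F.$$

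The main obstacle is the residual term $\sum_F(\Pi_F\boldsymbol{\tau}_h\boldsymbol{n},\boldsymbol{w}_F)_F$: because ${\rm ND}_0(F)$ is a strict subspace of $\mathbb{P}_1(F;\mathbb{R}^{d-1})$, the DoFs cannot directly see $\boldsymbol{w}_F$. I would resolve this by establishing a discrete norm equivalence. Since $\boldsymbol{w}_F$ is $L^2$-orthogonal to the face rigid-motion space ${\rm ND}_0(F)$, Korn's second inequality on the $(d-1)$-simplex $F$ gives $\|\boldsymbol{w}_F\|_{0,F}\lesssim h_F\|\boldsymbol{\varepsilon}_F(\Pi_F\llbracket\boldsymbol{v}_h\rrbracket)\|_{0,F}$, and the inverse trace inequality applied to the piecewise constant $\boldsymbol{\varepsilon}(\boldsymbol{v}_h)$ yields $\|\boldsymbol{\varepsilon}_F(\Pi_F\llbracket\boldsymbol{v}_h\rrbracket)\|_{0,F}^2\lesssim h_F^{-1}\|\boldsymbol{\varepsilon}_h(\boldsymbol{v}_h)\|_{0,\omega_F}^2$, where $\omega_F$ is the patch of elements sharing $F$. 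Summing over faces produces $\sum_F h_F^{-1}\|\boldsymbol{w}_F\|_{0,F}^2\lesssim\|\boldsymbol{\varepsilon}_h(\boldsymbol{v}_h)\|_0^2$, and hence
$$|\boldsymbol{v}_h|_{1,h}^2\eqsim\|\boldsymbol{\varepsilon}_h(\boldsymbol{v}_h)\|_0^2+\sum_F h_F^{-1}\bigl(\|\boldsymbol{n}\cdot\llbracket\boldsymbol{v}_h\rrbracket\|_{0,F}^2+\|Q_F^{\rm ND}\Pi_F\llbracket\boldsymbol{v}_h\rrbracket\|_{0,F}^2\bigr).$$

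To close the argument, the standard DoF-norm equivalence on the finite-dimensional space $\boldsymbol{\Sigma}(K)$ together with scaling gives $\|\boldsymbol{\tau}_h\|_{0,h}\lesssim|\boldsymbol{v}_h|_{1,h}$, since each prescribed DoF value of $\boldsymbol{\tau}_h$ is of the expected magnitude in terms of $\boldsymbol{\varepsilon}(\boldsymbol{v}_h)$ or $h_F^{-1}\llbracket\boldsymbol{v}_h\rrbracket$. The residual term is then absorbed by Cauchy-Schwarz and Young's inequality, using $\sum_F h_F\|\Pi_F\boldsymbol{\tau}_h\boldsymbol{n}\|_{0,F}^2\leq\|\boldsymbol{\tau}_h\|_{0,h}^2$ together with the face-Korn bound on $\boldsymbol{w}_F$, delivering $b(\boldsymbol{\tau}_h,\boldsymbol{v}_h)\gtrsim|\boldsymbol{v}_h|_{1,h}^2$ and hence \eqref{infsupconditionhnorm}. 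The hardest step will be the face-level Korn plus inverse trace estimate, together with the careful bookkeeping required to turn the DoF prescriptions into the bound $\|\boldsymbol{\tau}_h\|_{0,h}\lesssim|\boldsymbol{v}_h|_{1,h}$.
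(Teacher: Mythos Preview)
Your overall strategy coincides with the paper's: construct $\boldsymbol{\tau}_h$ through its DoFs so that the volume moment hits $-\boldsymbol{\varepsilon}(\boldsymbol{v}_h)$ and the face moments hit $h_F^{-1}\llbracket\boldsymbol{v}_h\rrbracket$, then control the residual coming from the fact that the tangential-normal DoF space ${\rm ND}_0(F)$ is strictly smaller than $\mathbb{P}_1(F;\mathbb{R}^{d-1})$. Your face-level Korn + inverse-trace argument for $\sum_F h_F^{-1}\|\boldsymbol{w}_F\|_{0,F}^2\lesssim\|\boldsymbol{\varepsilon}_h(\boldsymbol{v}_h)\|_0^2$ is a valid alternative to the paper's route, which instead invokes Brenner's elementwise rigid-motion interpolation $\pi_K$ and Lemma~\ref{lem:PiFRM} to reach the same bound; both arguments rest on the same idea that the part of $\Pi_F\llbracket\boldsymbol{v}_h\rrbracket$ orthogonal to face rigid motions is controlled by the bulk strain.

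There is, however, a genuine gap in the absorption step. With your single $\boldsymbol{\tau}_h$ (all three groups of DoFs activated with unit weight), the positive contribution is $E+N+A$ in the obvious notation, while the residual satisfies $|R|\le C(E+N+A)^{1/2}E^{1/2}$; Young's inequality then leaves a negative term $-C'E$ with a constant $C'$ you cannot make small. The paper avoids this by splitting $\boldsymbol{\tau}_h=\boldsymbol{\tau}_1+\tfrac{1}{2C_2}\boldsymbol{\tau}_2$, where $\boldsymbol{\tau}_1$ carries only the interior and normal-normal DoFs (giving $E+N$ exactly, with no residual since $(\Pi_F\boldsymbol{\tau}_1\boldsymbol{n})|_F=0$) and $\boldsymbol{\tau}_2$ carries only the tangential-normal DoFs. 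The small prefactor on $\boldsymbol{\tau}_2$ is precisely what lets the $-C_2E$ coming from the residual be absorbed into the $E$ supplied by $\boldsymbol{\tau}_1$. Your construction can be fixed in exactly this way: scale the tangential-normal DoFs of your $\boldsymbol{\tau}_h$ by a sufficiently small constant before combining.
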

\begin{proof}
Let $\boldsymbol{\tau}_1 \in \boldsymbol{\Sigma}_h$ satisfy that all the DoFs \eqref{lowdivShddof} (DoFs \eqref{sigmadof} for $d=2$) vanish except
\begin{align*}
(\boldsymbol{n}^{\intercal}\boldsymbol{\tau}_1 \boldsymbol{n}, q)_e&=\frac{1}{h_F} (\llbracket \boldsymbol{v}_h\cdot\boldsymbol{n} \rrbracket, q)_F \quad\;\; \forall~q\in \mathbb{P}_{1}(F), F\in \Delta_{d-1}(\mathcal{T}_h),\\
(\boldsymbol{\tau}_1, \boldsymbol{q})_K&=-(\boldsymbol{\varepsilon}(\boldsymbol{v}_h), \boldsymbol{q})_K \quad\quad\;\;\;\,  \forall~\boldsymbol{q} \in \mathbb{P}_{0}(K ; \mathbb{S}), K \in \mathcal{T}_h.
\end{align*}
Let $\boldsymbol{\tau}_2 \in \boldsymbol{\Sigma}_h$ satisfy that all the DoFs \eqref{lowdivShddof} (DoFs \eqref{sigmadof} for $d=2$) vanish except
$$
(\Pi_F\boldsymbol{\tau}_2 \boldsymbol{n}, \boldsymbol{q})_e=\frac{1}{h_F} (\llbracket \Pi_F\boldsymbol{v}_h \rrbracket, \boldsymbol{q})_F \quad\;\; \forall~\boldsymbol q\in {\rm ND}_{0}(F), F\in \Delta_{d-1}(\mathcal{T}_h).
$$
By the norm equivalence,
\begin{equation}\label{eq:202303250}	
\|\boldsymbol{\tau}_1\|_{0,K}\lesssim \|\boldsymbol{\varepsilon}(\boldsymbol{v}_h)\|_{0,K} + \sum_{F\in\partial K}h_F^{-1/2}\|\llbracket \boldsymbol{v}_h\cdot\boldsymbol{n} \rrbracket\|_{0,F}\quad\forall~K\in\mathcal{T}_h,
\end{equation}
\begin{equation}\label{eq:20230325}	
\|\boldsymbol{\tau}_2\|_{0,K}\lesssim \sum_{F\in\partial K}h_F^{-1/2}\|\llbracket \Pi_F\boldsymbol{v}_h\rrbracket\|_{0,F}\quad\forall~K\in\mathcal{T}_h.
\end{equation}
Notice that $(\Pi_F\boldsymbol{\tau}_1 \boldsymbol{n})|_F=0$ and $(\boldsymbol{n}^{\intercal}\boldsymbol{\tau}_2 \boldsymbol{n})|_F=0$ for each edge $F\in\Delta_{d-1}(\mathcal{T}_h)$. We get from~\eqref{bilinearformbrock} that
\begin{equation}\label{eq:202303251}	
b(\boldsymbol{\tau}_1,\boldsymbol{v}_h)=\|\boldsymbol{\varepsilon}_h(\boldsymbol{v}_h)\|_0^2 + \sum_{F \in \Delta_{d-1}(\mathcal{T}_h)} h_F^{-1}\|\llbracket\boldsymbol{v}_h\cdot\boldsymbol{n}\rrbracket\|_{0, F}^2,
\end{equation}
\begin{align*}	
&\quad\, b(\boldsymbol{\tau}_2,\boldsymbol{v}_h)\\
&=\sum_{F \in \Delta_{d-1}(\mathcal{T}_h)}(\Pi_F\boldsymbol{\tau}_2 \boldsymbol{n}, \llbracket\Pi_F\boldsymbol{v}_h\rrbracket-Q_F^{\rm ND}\llbracket\Pi_F\boldsymbol{v}_h\rrbracket)_F + \sum_{F \in\Delta_{d-1}(\mathcal{T}_h)} h_F^{-1}\|Q_F^{\rm ND}\llbracket\Pi_F\boldsymbol{v}_h\rrbracket\|_{0, F}^2 \\
&=\sum_{F \in \Delta_{d-1}(\mathcal{T}_h)} h_F^{-1}\|\llbracket\Pi_F\boldsymbol{v}_h\rrbracket\|_{0, F}^2+\sum_{F \in \Delta_{d-1}(\mathcal{T}_h)}(\Pi_F\boldsymbol{\tau}_2 \boldsymbol{n}, \llbracket\Pi_F\boldsymbol{v}_h\rrbracket-Q_F^{\rm ND}\llbracket\Pi_F\boldsymbol{v}_h\rrbracket)_F \\
&\quad - \sum_{F \in \Delta_{d-1}(\mathcal{T}_h)} h_F^{-1}\big\|\llbracket\Pi_F\boldsymbol{v}_h\rrbracket-Q_F^{\rm ND}\llbracket\Pi_F\boldsymbol{v}_h\rrbracket\big\|_{0, F}^2,
\end{align*}
where $Q_F^{\rm ND}$ is the $L^2$-orthogonal projector onto ${\rm ND}_{0}(F)$ with respect to the inner product $(\cdot,\cdot)_F$.
Thanks to the Cauchy-Schwarz inequality, the inverse trace inequality for polynomials, \eqref{eq:20230325} and the Young's inequality,
\begin{align*}
&\quad-\sum_{F \in \Delta_{d-1}(\mathcal{T}_h)}(\Pi_F\boldsymbol{\tau}_2 \boldsymbol{n}, \llbracket\Pi_F\boldsymbol{v}_h\rrbracket-Q_F^{\rm ND}\llbracket\Pi_F\boldsymbol{v}_h\rrbracket)_F\\
&\leq \sum_{F \in \Delta_{d-1}(\mathcal{T}_h)}\|\Pi_F\boldsymbol{\tau}_2 \boldsymbol{n}\|_{0,F}\| \llbracket\Pi_F\boldsymbol{v}_h\rrbracket-Q_F^{\rm ND}\llbracket\Pi_F\boldsymbol{v}_h\rrbracket\|_{0,F}\\
&\leq C \|\boldsymbol{\tau}_2\|_0\Bigg(\sum_{F \in \Delta_{d-1}(\mathcal{T}_h)} h_F^{-1}\big\|\llbracket\Pi_F\boldsymbol{v}_h\rrbracket-Q_F^{\rm ND}\llbracket\Pi_F\boldsymbol{v}_h\rrbracket\big\|_{0, F}^2\Bigg)^{1/2} \\
&\leq C \Bigg(\sum_{F \in \Delta_{d-1}(\mathcal{T}_h)} h_F^{-1}\big\|\llbracket\Pi_F\boldsymbol{v}_h\rrbracket\big\|_{0, F}^2\Bigg)^{1/2}\Bigg(\sum_{F \in \Delta_{d-1}(\mathcal{T}_h)} h_F^{-1}\big\|\llbracket\Pi_F\boldsymbol{v}_h\rrbracket-Q_F^{\rm ND}\llbracket\Pi_F\boldsymbol{v}_h\rrbracket\big\|_{0, F}^2\Bigg)^{1/2} \\
&\leq\frac{1}{2}\sum_{F \in \Delta_{d-1}(\mathcal{T}_h)} h_F^{-1}\big\|\llbracket\Pi_F\boldsymbol{v}_h\rrbracket\big\|_{0, F}^2 + (C_1-1)\sum_{F \in \Delta_{d-1}(\mathcal{T}_h)} h_F^{-1}\big\|\llbracket\Pi_F\boldsymbol{v}_h\rrbracket-Q_F^{\rm ND}\llbracket\Pi_F\boldsymbol{v}_h\rrbracket\big\|_{0, F}^2,
\end{align*}
where $C_1=1+\frac{1}{2}C^2$ is independent of the mesh size $h$ and the parameter $\lambda$, but may depend on the chunkiness parameter, i.e. the aspect ratio. Hence,
$$
b(\boldsymbol{\tau}_2,\boldsymbol{v}_h)\geq \frac{1}{2}\sum_{F \in \Delta_{d-1}(\mathcal{T}_h)} h_F^{-1}\|\llbracket\Pi_F\boldsymbol{v}_h\rrbracket\|_{0, F}^2 - C_1\sum_{F \in \Delta_{d-1}(\mathcal{T}_h)} h_F^{-1}\big\|\llbracket\Pi_F\boldsymbol{v}_h\rrbracket-Q_F^{\rm ND}\llbracket\Pi_F\boldsymbol{v}_h\rrbracket\big\|_{0, F}^2.
$$
Let $\pi_K$ be the interpolation operator from $H^1(K;\mathbb R^2)$ onto $\mathbf{RM}$ defined by (3.1)-(3.2) in~\cite{Brenner2004}, and let $\pi$ be the elementwise global version of $\pi_K$, i.e., $\pi|_K:=\pi_K$ for each $K\in\mathcal{T}_h$. 
By Lemma~\ref{lem:PiFRM}, $\llbracket\Pi_F(\pi\boldsymbol{v}_h)\rrbracket\in{\rm ND}_{0}(F)$ on each face $F\in \Delta_{d-1}(\mathcal{T}_h)$.
It follows from (3.3)-(3.4) in \cite{Brenner2004} that 
\begin{align*}
&\quad\sum_{F \in \Delta_{d-1}(\mathcal{T}_h)} h_F^{-1}\big\|\llbracket\Pi_F\boldsymbol{v}_h\rrbracket-Q_F^{\rm ND}\llbracket\Pi_F\boldsymbol{v}_h\rrbracket\big\|_{0, F}^2 \\
&= \sum_{F \in \Delta_{d-1}(\mathcal{T}_h)} h_F^{-1}\big\|\llbracket\Pi_F(\boldsymbol{v}_h-\pi\boldsymbol{v}_h)\rrbracket-Q_F^{\rm ND}\llbracket\Pi_F(\boldsymbol{v}_h-\pi\boldsymbol{v}_h)\rrbracket\big\|_{0, F}^2 \\
&\leq \sum_{F \in \Delta_{d-1}(\mathcal{T}_h)} h_F^{-1}\big\|\llbracket\Pi_F(\boldsymbol{v}_h-\pi\boldsymbol{v}_h)\rrbracket\big\|_{0, F}^2\leq C_0\|\boldsymbol{\varepsilon}_h(\boldsymbol{v}_h)\|_0^2.
\end{align*}
Combining the last two inequalities gives
\begin{equation}\label{eq:202303252}	
b(\boldsymbol{\tau}_2,\boldsymbol{v}_h)\geq\frac{1}{2}\sum_{F \in \Delta_{d-1}(\mathcal{T}_h)} h_F^{-1}\big\|\llbracket\Pi_F\boldsymbol{v}_h\rrbracket\big\|_{0, F}^2-C_2\|\boldsymbol{\varepsilon}_h(\boldsymbol{v}_h)\|_0^2
\end{equation}
with $C_2=C_0C_1$.

Take $\boldsymbol{\tau}_h=\boldsymbol{\tau}_1+\frac{1}{2C_2}\boldsymbol{\tau}_2\in\boldsymbol{\Sigma}_h$. We have from \eqref{eq:202303250}-\eqref{eq:202303252} that
$$
b(\boldsymbol{\tau}_h,\boldsymbol{v}_h)\geq\frac{1}{2}\|\boldsymbol{\varepsilon}_h(\boldsymbol{v}_h)\|_0^2 + \sum_{F \in \Delta_{d-1}(\mathcal{T}_h)} h_F^{-1}\|\llbracket\boldsymbol{v}_h\cdot\boldsymbol{n}\rrbracket\|_{0, F}^2 +\frac{1}{4C_2}\sum_{F \in \Delta_{d-1}(\mathcal{T}_h)} h_F^{-1}\|\llbracket\Pi_F\boldsymbol{v}_h\rrbracket\|_{0, F}^2,
$$
$$
\|\boldsymbol{\tau}_h\|_{0,h}\leq \|\boldsymbol{\tau}_1\|_{0,h}+\|\boldsymbol{\tau}_2\|_{0,h}\lesssim|\boldsymbol{v}|_{1, h}.
$$
Therefore the discrete inf-sup condition \eqref{infsupconditionhnorm} is true.
\end{proof}

Recall the coercivity of the bilinear form $a(\cdot, \cdot)$ on the null space of the div operator.
For $\boldsymbol{\tau} \in {H}(\div, \Omega; \mathbb{S})$ satisfying 
$\int_\Omega \tr\boldsymbol{\tau} \,{\rm d}x = 0$ and $\div\boldsymbol{\tau}=0$, we have (cf. \cite[Proposition 9.1.1]{BoffiBrezziFortin2013} and \cite[Section 3.3]{ChenHuHuang2018})
\begin{equation}\label{discretecoercivity} 
\|\boldsymbol{\tau}\|_0^2 \lesssim a(\boldsymbol{\tau}, \boldsymbol{\tau}).
\end{equation}

		
	Applying the Babu{\v{s}}ka-Brezzi theory \cite{BoffiBrezziFortin2013}, from the discrete inf-sup conditions~\eqref{infsupcondition} and (\ref{infsupconditionhnorm}), and the coercivity (\ref{discretecoercivity}), we achieve the well-posedness of the mixed finite element method (\ref{distype1})-(\ref{distype2}), and the following discrete stability results.	
	\begin{theorem}
		The mixed finite element method \eqref{distype1}-\eqref{distype2} is well-posed. We have the discrete stability results
		\begin{align}	\label{eq:discretestability}
			\quad\|\widetilde{\boldsymbol{\sigma}}_h\|_{H(\div)}+\|\widetilde{\boldsymbol{u}}_h\|_0
			\lesssim \sup_{\boldsymbol{\tau}_h\in\boldsymbol{\Sigma}_h,\boldsymbol{v}_h\in \boldsymbol{V}_h}\frac{a(\widetilde{\boldsymbol{\sigma}}_h,\boldsymbol{\tau}_h)+b(\boldsymbol{\tau}_h,\widetilde{\boldsymbol{u}}_h)+b(\widetilde{\boldsymbol{\sigma}}_h, \boldsymbol{v}_h)}{\|\boldsymbol{\tau}_h\|_{H(\div)}+\|\boldsymbol{v}_h\|_0},
		\end{align}
		\begin{align}	\label{eq:discretestabilityhnorm}
			\quad\|\widetilde{\boldsymbol{\sigma}}_h\|_{0,h}+|\widetilde{\boldsymbol{u}}_h|_{1,h}
			\lesssim \sup_{\boldsymbol{\tau}_h\in\boldsymbol{\Sigma}_h,\boldsymbol{v}_h\in \boldsymbol{V}_h}\frac{a(\widetilde{\boldsymbol{\sigma}}_h,\boldsymbol{\tau}_h)+b(\boldsymbol{\tau}_h,\widetilde{\boldsymbol{u}}_h)+b(\widetilde{\boldsymbol{\sigma}}_h, \boldsymbol{v}_h)}{\|\boldsymbol{\tau}_h\|_{0,h}+|\boldsymbol{v}_h|_{1,h}},
		\end{align}
		for any $\widetilde{\boldsymbol{\sigma}}_h\in \boldsymbol{\Sigma}_h, \widetilde{\boldsymbol{u}}_h\in \boldsymbol{V}_h$.
All the hidden constants in these inequalities are independent of $h$ and the parameter $\lambda$.
	\end{theorem}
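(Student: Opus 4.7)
The plan is to invoke the Babu\v{s}ka-Brezzi saddle-point theorem (cf.\ \cite[Theorem~4.2.3]{BoffiBrezziFortin2013}) for the discrete problem \eqref{distype1}-\eqref{distype2} in each of the two norm pairs $(\|\cdot\|_{H(\div)}, \|\cdot\|_0)$ and $(\|\cdot\|_{0,h}, |\cdot|_{1,h})$ independently. All the hard ingredients have already been assembled in the preceding lemmas, so this theorem is really the payoff.

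For each norm pair I would verify three items: boundedness of $a$ and $b$, the inf-sup condition for $b$, and coercivity of $a$ on the kernel $Z_h := \{\boldsymbol\tau_h \in \boldsymbol\Sigma_h : b(\boldsymbol\tau_h, \boldsymbol v_h)=0\ \forall\,\boldsymbol v_h \in \boldsymbol V_h\}$. Boundedness of both bilinear forms in each norm pair has already been recorded in the excerpt. The inf-sup conditions are exactly \eqref{infsupcondition} and \eqref{infsupconditionhnorm}, respectively. Since $\div\boldsymbol\Sigma_h = \boldsymbol V_h$ by \eqref{eq:divonto} (and by the complex \eqref{disstresscomplex} when $d=2$), the kernel reduces to $Z_h = \{\boldsymbol\tau_h \in \boldsymbol\Sigma_h : \div\boldsymbol\tau_h = 0\}$. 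On $Z_h$ the $H(\div)$ norm reduces to the $L^2$ norm, and $\|\cdot\|_{0,h} \eqsim \|\cdot\|_0$ on the polynomial space $\boldsymbol\Sigma_h$ via the inverse trace inequality applied elementwise, so in both cases the required kernel coercivity collapses to $\|\boldsymbol\tau_h\|_0^2 \lesssim a(\boldsymbol\tau_h, \boldsymbol\tau_h)$ for $\boldsymbol\tau_h \in Z_h$, which is precisely \eqref{discretecoercivity} modulo the mean-trace hypothesis $\int_\Omega \tr\boldsymbol\tau_h\,\mathrm{d}x = 0$.

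To remove that hypothesis I would use the standard decomposition $\boldsymbol\tau_h = \boldsymbol\tau_h^0 + c\boldsymbol I$ with $c := (d|\Omega|)^{-1}\int_\Omega \tr\boldsymbol\tau_h\,\mathrm{d}x$; then $\boldsymbol\tau_h^0 \in Z_h$ has vanishing mean trace and \eqref{discretecoercivity} controls $\|\boldsymbol\tau_h^0\|_0$. A direct computation using $\mathcal A\boldsymbol I = (d\lambda+2\mu)^{-1}\boldsymbol I$ and $\int_\Omega \tr\boldsymbol\tau_h^0\,\mathrm{d}x = 0$ gives $a(\boldsymbol\tau_h,\boldsymbol\tau_h) = a(\boldsymbol\tau_h^0,\boldsymbol\tau_h^0) + d|\Omega|c^2/(d\lambda+2\mu)$, and the surviving constant mode is absorbed $\lambda$-uniformly by coupling with the $b$-inf-sup \eqref{infsupcondition} applied to a test stress with constant divergence. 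With these ingredients in place, the Babu\v{s}ka-Brezzi theorem delivers the combined discrete inf-sup \eqref{eq:discretestability} and \eqref{eq:discretestabilityhnorm}, from which well-posedness of the square finite-dimensional linear system is immediate, all constants being independent of $h$ and $\lambda$.

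The only real obstacle in this step is ensuring the $\lambda$-uniformity of the kernel coercivity, because the compliance tensor $\mathcal A$ degenerates on the constant-trace subspace as $\lambda \to \infty$; the mean-trace split outlined above is the standard remedy. Everything else is routine bookkeeping once the preceding lemmas---in particular the two inf-sup conditions and the surjectivity $\div\boldsymbol\Sigma_h = \boldsymbol V_h$---are granted.
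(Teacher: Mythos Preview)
Your outline matches the paper's one-sentence argument: both simply invoke the Babu\v{s}ka--Brezzi theory, assembling boundedness, the inf-sup conditions \eqref{infsupcondition} and \eqref{infsupconditionhnorm}, and the coercivity \eqref{discretecoercivity}. You go further than the paper by noticing that \eqref{discretecoercivity} carries the extra hypothesis $\int_\Omega\tr\boldsymbol\tau\,\mathrm dx=0$, which is not automatic on the discrete kernel $Z_h$, and by attempting a remedy.

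The remedy, however, does not work. The offending mode $c\boldsymbol I$ is divergence-free, so the $b$-inf-sup cannot ``absorb'' it: $b(c\boldsymbol I,\boldsymbol v_h)=0$ for every $\boldsymbol v_h$, and there is no test stress that sees $c$ through $b$. Concretely, take $\widetilde{\boldsymbol\sigma}_h=\boldsymbol I\in\boldsymbol\Sigma_h$ and $\widetilde{\boldsymbol u}_h=0$. Then the numerator in \eqref{eq:discretestability} collapses to $a(\boldsymbol I,\boldsymbol\tau_h)=(d\lambda+2\mu)^{-1}\int_\Omega\tr\boldsymbol\tau_h\,\mathrm dx$, so the right-hand side is $O\bigl((d\lambda+2\mu)^{-1}\bigr)$ while the left-hand side equals $\|\boldsymbol I\|_0=\sqrt{d|\Omega|}$. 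Hence \eqref{eq:discretestability} cannot hold with a constant independent of $\lambda$; the paper's statement and its one-line proof share the same gap. Well-posedness for each fixed $\lambda$ is of course unaffected---only the claimed $\lambda$-uniformity of the abstract stability fails. The $\lambda$-robust error estimates in \S\ref{sec33} can still be obtained, but by arguing directly on the error equation (using $\div(\Pi_h\boldsymbol\sigma-\boldsymbol\sigma_h)=0$ and handling the trace part separately) rather than by appealing to \eqref{eq:discretestability} as a black box.
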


	\subsection{Error analysis}\label{sec33}
We will show the error analysis of the mixed finite element method \eqref{distype1}-\eqref{distype2}.

\begin{theorem}
Let $(\boldsymbol{\sigma}, \boldsymbol{u})$ and $(\boldsymbol{\sigma}_h, \boldsymbol{u}_h)$ be the solution of the mixed formulation~\eqref{HJ1}-\eqref{HJ2} and the mixed finite element method \eqref{distype1}-\eqref{distype2} respectively. Assume $\boldsymbol{\sigma}\in{H}^3(\Omega;\mathbb{S})$ and $\boldsymbol{u}\in {H}^2(\Omega;\mathbb R^d)$. Then
\begin{align}
\label{eq:errorestimate}
\quad\|\boldsymbol{\sigma}-\boldsymbol{\sigma}_h\|_{0,h} + |Q_h \boldsymbol{u}-\boldsymbol{u}_h|_{1,h}\lesssim h^3\|\boldsymbol{\sigma}\|_{3},\\
\label{erorestimatesigmahdiv}
\|\boldsymbol{\sigma}-\boldsymbol{\sigma}_h\|_{{H}(\div)}\lesssim h^2\|\boldsymbol{\sigma}\|_{3},\\
\label{erorestimateuh0}
\|\boldsymbol{u}-\boldsymbol{u}_h\|_{0}\lesssim h^2(h\|\boldsymbol{\sigma}\|_{3}+\|\boldsymbol{u}\|_2).
\end{align}
All the hidden constants in these inequalities are independent of $h$ and the parameter $\lambda$.
\end{theorem}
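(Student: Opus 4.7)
The plan is to apply the two discrete stability bounds \eqref{eq:discretestability} and \eqref{eq:discretestabilityhnorm} to the pair $(\widetilde{\boldsymbol{\sigma}}_h, \widetilde{\boldsymbol{u}}_h) := (\Pi_h\boldsymbol{\sigma} - \boldsymbol{\sigma}_h,\; Q_h\boldsymbol{u} - \boldsymbol{u}_h)$, then convert the bounds on the error relative to $(\Pi_h\boldsymbol{\sigma}, Q_h\boldsymbol{u})$ into bounds on the full error by the triangle inequality. The crucial algebraic step is deriving the error equations: subtracting the discrete problem \eqref{distype1}--\eqref{distype2} from the continuous one \eqref{HJ1}--\eqref{HJ2} and inserting the interpolants, one shows
\begin{align*}
a(\widetilde{\boldsymbol{\sigma}}_h, \boldsymbol{\tau}_h) + b(\boldsymbol{\tau}_h, \widetilde{\boldsymbol{u}}_h) &= a(\Pi_h\boldsymbol{\sigma} - \boldsymbol{\sigma}, \boldsymbol{\tau}_h), \\
b(\widetilde{\boldsymbol{\sigma}}_h, \boldsymbol{v}_h) &= 0,
\end{align*}
where the $b$-term on the right of the first line vanishes by orthogonality \eqref{buortho}, and the second line follows from the commuting property \eqref{eq:Pihbcommutatediv} together with $\div\boldsymbol{\sigma}_h = -Q_h\boldsymbol{f} = Q_h\div\boldsymbol{\sigma}$ from \eqref{eq:divsigmah}.

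For \eqref{eq:errorestimate}, applying \eqref{eq:discretestabilityhnorm} together with the boundedness $a(\cdot,\cdot)\leq \|\cdot\|_{0,h}\|\cdot\|_{0,h}$ gives $\|\widetilde{\boldsymbol{\sigma}}_h\|_{0,h} + |\widetilde{\boldsymbol{u}}_h|_{1,h} \lesssim \|\Pi_h\boldsymbol{\sigma} - \boldsymbol{\sigma}\|_{0,h}$. The norm $\|\Pi_h\boldsymbol{\sigma} - \boldsymbol{\sigma}\|_{0,h}$ is controlled elementwise via the trace inequality and the interpolation estimate \eqref{eq:Piherrorestimate} with $s=3$, giving the $O(h^3\|\boldsymbol{\sigma}\|_3)$ rate. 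Since $\Pi_h\boldsymbol{\sigma} - \boldsymbol{\sigma}$ also satisfies $|Q_h\boldsymbol{u}-Q_h\boldsymbol{u}|_{1,h}=0$, the triangle inequality then produces \eqref{eq:errorestimate}.

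For \eqref{erorestimatesigmahdiv}, I would treat the $L^2$ and $\div$ parts separately. Since $\|\cdot\|_0 \leq \|\cdot\|_{0,h}$, the $L^2$ bound already follows from \eqref{eq:errorestimate}, actually with rate $h^3$. The divergence part is sharper and independent: by \eqref{eq:divsigmah} and $\div\boldsymbol{\sigma} = -\boldsymbol{f}$,
$$\|\div(\boldsymbol{\sigma} - \boldsymbol{\sigma}_h)\|_0 = \|\div\boldsymbol{\sigma} - Q_h\div\boldsymbol{\sigma}\|_0 \lesssim h^2 |\div\boldsymbol{\sigma}|_2 \lesssim h^2 \|\boldsymbol{\sigma}\|_3$$
by the standard $L^2$ projection error estimate onto the piecewise $\mathbb{P}_1$ space $\boldsymbol{V}_h$; this is the rate-limiting ingredient. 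For \eqref{erorestimateuh0}, I would split $\|\boldsymbol{u}-\boldsymbol{u}_h\|_0 \leq \|\boldsymbol{u}-Q_h\boldsymbol{u}\|_0 + \|Q_h\boldsymbol{u}-\boldsymbol{u}_h\|_0$. The first term contributes $O(h^2\|\boldsymbol{u}\|_2)$ by standard $L^2$ projection. The second is the superconvergent piece: applying the other stability bound \eqref{eq:discretestability} to the same error equations yields $\|Q_h\boldsymbol{u}-\boldsymbol{u}_h\|_0 \lesssim \|\Pi_h\boldsymbol{\sigma} - \boldsymbol{\sigma}\|_0 \lesssim h^3\|\boldsymbol{\sigma}\|_3$ by \eqref{eq:Piherrorestimate}, giving the combined bound.

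The main obstacle is the first step: verifying that the right-hand sides of the error equations collapse cleanly to the single term $a(\Pi_h\boldsymbol{\sigma} - \boldsymbol{\sigma}, \boldsymbol{\tau}_h)$, with no displacement-interpolation residual and no stress-divergence residual. This requires both the $L^2$-orthogonality \eqref{buortho} for the displacement side and the commuting diagram property \eqref{eq:Pihbcommutatediv} together with \eqref{eq:divsigmah} for the stress side; it is precisely this cancellation that delivers $h^3$-superconvergence of both $\|\boldsymbol{\sigma}-\boldsymbol{\sigma}_h\|_{0,h}$ and $|Q_h\boldsymbol{u}-\boldsymbol{u}_h|_{1,h}$. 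Robustness with respect to $\lambda$ is inherited throughout because neither the stability constants in \eqref{eq:discretestability}--\eqref{eq:discretestabilityhnorm} nor the projection/interpolation error estimates depend on $\lambda$, and the bound $a(\Pi_h\boldsymbol{\sigma} - \boldsymbol{\sigma}, \boldsymbol{\tau}_h) \leq \|\Pi_h\boldsymbol{\sigma} - \boldsymbol{\sigma}\|_{0,h}\|\boldsymbol{\tau}_h\|_{0,h}$ also holds with a $\lambda$-independent constant.
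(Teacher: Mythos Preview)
Your proposal is correct and follows essentially the same route as the paper: derive the reduced error equation $a(\Pi_h\boldsymbol{\sigma}-\boldsymbol{\sigma},\boldsymbol{\tau}_h)$ via \eqref{buortho} and \eqref{eq:Pihbcommutatediv}, apply the two stability bounds \eqref{eq:discretestability}--\eqref{eq:discretestabilityhnorm} to $(\Pi_h\boldsymbol{\sigma}-\boldsymbol{\sigma}_h,\,Q_h\boldsymbol{u}-\boldsymbol{u}_h)$, and finish by triangle inequality plus \eqref{eq:Piherrorestimate} and the $Q_h$ projection estimate. The only simplification you are missing is that the paper bounds $a(\Pi_h\boldsymbol{\sigma}-\boldsymbol{\sigma},\boldsymbol{\tau}_h)\lesssim\|\Pi_h\boldsymbol{\sigma}-\boldsymbol{\sigma}\|_0\|\boldsymbol{\tau}_h\|_{0,h}$ using the plain $L^2$ norm rather than $\|\cdot\|_{0,h}$, which avoids the extra trace-inequality work you describe.
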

\begin{proof}
Subtract \eqref{distype1}-\eqref{distype2} from \eqref{HJ1}-\eqref{HJ2} to obtain the error equation
\begin{equation}\label{eq:error}
a(\boldsymbol{\sigma}-\boldsymbol{\sigma}_h, \boldsymbol{\tau}_h)+b(\boldsymbol{\tau}_h, \boldsymbol{u}-\boldsymbol{u}_h) + b(\boldsymbol{\sigma}-\boldsymbol{\sigma}_h, \boldsymbol{v}_h)=0\quad\forall~\boldsymbol{\tau}_h \in \boldsymbol{\Sigma}_h, \boldsymbol{v}_h \in \boldsymbol{V}_h.
\end{equation}
Adopting \eqref{buortho} and \eqref{eq:Pihbcommutatediv}, we acquire for $\boldsymbol{\tau}_h \in \boldsymbol{\Sigma}_h$ and $\boldsymbol{v}_h \in \boldsymbol{V}_h$ that 
\begin{equation}\label{eq:error1}
a(\boldsymbol{\sigma}-\boldsymbol{\sigma}_h, \boldsymbol{\tau}_h)+b(\boldsymbol{\tau}_h, Q_h\boldsymbol{u}-\boldsymbol{u}_h) + b(\Pi_h\boldsymbol{\sigma}-\boldsymbol{\sigma}_h, \boldsymbol{v}_h)=0.
\end{equation}
Then taking $\widetilde{\boldsymbol{\sigma}}_h = \Pi_h \boldsymbol{\sigma}-\boldsymbol{\sigma}_h$ and $\widetilde{\boldsymbol{u}}_h = Q_h\boldsymbol{u}-\boldsymbol{u}_h$ in \eqref{eq:discretestability} and \eqref{eq:discretestabilityhnorm}, we have
$$
\|\Pi_h \boldsymbol{\sigma}-\boldsymbol{\sigma}_h\|_{{H}(\div)}+\|Q_h \boldsymbol{u}-\boldsymbol{u}_h\|_0
\lesssim \sup _{\boldsymbol{\tau}_h \in \boldsymbol{\Sigma}_h, \boldsymbol{v}_h \in \boldsymbol{V}_h} \frac{a(\Pi_h \boldsymbol{\sigma}-\boldsymbol{\sigma}, \boldsymbol{\tau}_h)}{\|\boldsymbol{\tau}_h\|_{{H}(\div)}+\|\boldsymbol{v}_h\|_0}\lesssim \|\boldsymbol{\sigma}-\Pi_h\boldsymbol{\sigma}\|_0,
$$
$$
\|\Pi_h \boldsymbol{\sigma}-\boldsymbol{\sigma}_h\|_{0,h}+|Q_h \boldsymbol{u}-\boldsymbol{u}_h|_{1,h}
\lesssim \sup _{\boldsymbol{\tau}_h \in \boldsymbol{\Sigma}_h, \boldsymbol{v}_h \in \boldsymbol{V}_h} \frac{a(\Pi_h \boldsymbol{\sigma}-\boldsymbol{\sigma}, \boldsymbol{\tau}_h)}{\|\boldsymbol{\tau}_h\|_{0,h}+|\boldsymbol{v}_h|_{1,h}}\lesssim \|\boldsymbol{\sigma}-\Pi_h\boldsymbol{\sigma}\|_0.
$$
Hence, the estimates \eqref{eq:errorestimate} and \eqref{erorestimatesigmahdiv} follow from the triangle inequality and~\eqref{eq:Piherrorestimate}.
Finally, the estimate \eqref{erorestimateuh0} holds from the triangle inequality, \eqref{eq:Piherrorestimate} and the error estimate of $Q_h$.
\end{proof}

\begin{remark}
The convergence rates of $\|\boldsymbol{u}-\boldsymbol{u}\|_0$, $\|\boldsymbol{\sigma}-\boldsymbol{\sigma}_h\|_0$ and $\|\boldsymbol{\sigma}-\boldsymbol{\sigma}_h\|_{{H}(\div)}$ are optimal. The estimate of $|Q_h \boldsymbol{u}-\boldsymbol{u}_h|_{1,h}$ in \eqref{eq:errorestimate} is superconvergent, which is two-order higher than the optimal one.
\end{remark}

By using the duality argument \cite{DouglasRoberts1985,Stenberg1991}, we can derive the superconvergent estimate of $\|Q_h \boldsymbol{u}-\boldsymbol{u}_h\|_0$. Introduce the dual problem
\begin{equation}\label{linearelasdual}
	\left\{\begin{aligned}
			\mathcal{A} \widetilde{\boldsymbol{\sigma}}-\boldsymbol{\varepsilon}(\widetilde{\boldsymbol{u}})&=0 \qquad\qquad\;\;\; \text { in } \Omega, \\
			\operatorname{div} \widetilde{\boldsymbol{\sigma}}&=Q_h \boldsymbol{u}-\boldsymbol{u}_h \quad \text { in } \Omega, \\
			\widetilde{\boldsymbol{u}}&=0  \qquad\qquad\quad\text { on } \partial \Omega.
		\end{aligned}\right.
	\end{equation}
Under the assumption that $\Omega$ is convex,
it holds the regularity \cite{BrennerSung1992}
\begin{equation}\label{eq:u0regularity}
\|\widetilde{\boldsymbol{\sigma}}\|_1+\|\widetilde{\boldsymbol{u}}\|_2 \lesssim \|Q_h \boldsymbol{u}-\boldsymbol{u}_h\|_0.
\end{equation}

\begin{theorem}\label{thm:errorQhuuL2}
Let $(\boldsymbol{\sigma}, \boldsymbol{u})$ and $(\boldsymbol{\sigma}_h, \boldsymbol{u}_h)$ be the solution of the mixed formulation~\eqref{HJ1}-\eqref{HJ2} and the mixed finite element method \eqref{distype1}-\eqref{distype2} respectively. Assume $\Omega$ is convex, $\boldsymbol{\sigma}\in{H}^3(\Omega;\mathbb{S})$ and $\boldsymbol{u}\in {H}^2(\Omega;\mathbb R^d)$. Then
\begin{equation}
\label{errorQhuuL2}
\|Q_h \boldsymbol{u}-\boldsymbol{u}_h\|_0 \lesssim h^{4}\|\boldsymbol{\sigma}\|_{3}.
\end{equation}
The hidden constant is independent of $h$ and the parameter $\lambda$.
\end{theorem}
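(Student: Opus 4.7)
The plan is to run the standard Aubin--Nitsche duality argument, exploiting the commuting diagram property \eqref{eq:Pihbcommutatediv} of the interpolation operator $\Pi_h$ together with the orthogonality $b(\boldsymbol{\tau}_h,(I-Q_h)\boldsymbol{v})=0$ from \eqref{buortho}. Starting from
$$
\|Q_h\boldsymbol{u}-\boldsymbol{u}_h\|_0^2=(Q_h\boldsymbol{u}-\boldsymbol{u}_h,\operatorname{div}\widetilde{\boldsymbol{\sigma}})=b(\widetilde{\boldsymbol{\sigma}},Q_h\boldsymbol{u}-\boldsymbol{u}_h),
$$
I would use \eqref{eq:Pihbcommutatediv} to obtain $b(\widetilde{\boldsymbol{\sigma}}-\Pi_h\widetilde{\boldsymbol{\sigma}},Q_h\boldsymbol{u}-\boldsymbol{u}_h)=0$ and hence replace $\widetilde{\boldsymbol{\sigma}}$ with $\Pi_h\widetilde{\boldsymbol{\sigma}}$. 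Plugging $\boldsymbol{\tau}_h=\Pi_h\widetilde{\boldsymbol{\sigma}}$ and $\boldsymbol{v}_h=0$ into the modified error equation \eqref{eq:error1} yields
$$
\|Q_h\boldsymbol{u}-\boldsymbol{u}_h\|_0^2=-a(\boldsymbol{\sigma}-\boldsymbol{\sigma}_h,\Pi_h\widetilde{\boldsymbol{\sigma}})=-a(\boldsymbol{\sigma}-\boldsymbol{\sigma}_h,\widetilde{\boldsymbol{\sigma}})-a(\boldsymbol{\sigma}-\boldsymbol{\sigma}_h,\Pi_h\widetilde{\boldsymbol{\sigma}}-\widetilde{\boldsymbol{\sigma}}).
$$

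For the interpolation-error term I would combine the already-proved bound $\|\boldsymbol{\sigma}-\boldsymbol{\sigma}_h\|_0\lesssim h^3\|\boldsymbol{\sigma}\|_3$ from \eqref{eq:errorestimate}, the $s=1$ case of \eqref{eq:Piherrorestimate} giving $\|\widetilde{\boldsymbol{\sigma}}-\Pi_h\widetilde{\boldsymbol{\sigma}}\|_0\lesssim h\|\widetilde{\boldsymbol{\sigma}}\|_1$, the uniform boundedness of $\mathcal{A}$ on $L^2$, and the regularity \eqref{eq:u0regularity}, to obtain
$$
|a(\boldsymbol{\sigma}-\boldsymbol{\sigma}_h,\Pi_h\widetilde{\boldsymbol{\sigma}}-\widetilde{\boldsymbol{\sigma}})|\lesssim h^4\|\boldsymbol{\sigma}\|_3\,\|Q_h\boldsymbol{u}-\boldsymbol{u}_h\|_0.
$$

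For the first term I would use the dual PDE $\mathcal{A}\widetilde{\boldsymbol{\sigma}}=\boldsymbol{\varepsilon}(\widetilde{\boldsymbol{u}})$ to rewrite
$$
a(\boldsymbol{\sigma}-\boldsymbol{\sigma}_h,\widetilde{\boldsymbol{\sigma}})=(\boldsymbol{\sigma}-\boldsymbol{\sigma}_h,\boldsymbol{\varepsilon}(\widetilde{\boldsymbol{u}}))=-(\operatorname{div}(\boldsymbol{\sigma}-\boldsymbol{\sigma}_h),\widetilde{\boldsymbol{u}}),
$$
where the boundary terms drop since $\widetilde{\boldsymbol{u}}|_{\partial\Omega}=0$ and $\boldsymbol{\sigma}-\boldsymbol{\sigma}_h\in H(\operatorname{div},\Omega;\mathbb{S})$. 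Since $\operatorname{div}\boldsymbol{\sigma}=-\boldsymbol{f}$ and by \eqref{eq:divsigmah} $\operatorname{div}\boldsymbol{\sigma}_h=-Q_h\boldsymbol{f}$, this equals $((I-Q_h)\boldsymbol{f},\widetilde{\boldsymbol{u}})=((I-Q_h)\boldsymbol{f},(I-Q_h)\widetilde{\boldsymbol{u}})$ by orthogonality of $Q_h$. Standard $L^2$-projection estimates on $\boldsymbol{V}_h\supset$ piecewise $\mathbb{P}_1$ give $\|(I-Q_h)\boldsymbol{f}\|_0\lesssim h^2\|\boldsymbol{f}\|_2\lesssim h^2\|\boldsymbol{\sigma}\|_3$ and $\|(I-Q_h)\widetilde{\boldsymbol{u}}\|_0\lesssim h^2\|\widetilde{\boldsymbol{u}}\|_2\lesssim h^2\|Q_h\boldsymbol{u}-\boldsymbol{u}_h\|_0$. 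Summing both contributions and dividing by $\|Q_h\boldsymbol{u}-\boldsymbol{u}_h\|_0$ then yields \eqref{errorQhuuL2}.

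The step I expect to require the most care is the $\lambda$-robustness: the rewriting $a(\boldsymbol{\sigma}-\boldsymbol{\sigma}_h,\widetilde{\boldsymbol{\sigma}})=(\boldsymbol{\sigma}-\boldsymbol{\sigma}_h,\boldsymbol{\varepsilon}(\widetilde{\boldsymbol{u}}))$ deliberately pushes the $\lambda$-dependence off $\mathcal{A}$ and onto the regularity of the dual problem, so I would need to invoke a $\lambda$-independent regularity estimate for \eqref{linearelasdual} (as cited via \eqref{eq:u0regularity}), and use only the uniformly $\lambda$-bounded operator norm of $\mathcal{A}$ in the interpolation-error piece. Everything else is a routine Cauchy--Schwarz chase.
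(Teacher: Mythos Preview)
Your proposal is correct and follows essentially the same duality argument as the paper: both replace $\widetilde{\boldsymbol{\sigma}}$ by $\Pi_h\widetilde{\boldsymbol{\sigma}}$ via \eqref{eq:Pihbcommutatediv}, split $a(\boldsymbol{\sigma}-\boldsymbol{\sigma}_h,\Pi_h\widetilde{\boldsymbol{\sigma}})$ into the interpolation-error piece and $a(\boldsymbol{\sigma}-\boldsymbol{\sigma}_h,\widetilde{\boldsymbol{\sigma}})$, and convert the latter to $b(\boldsymbol{\sigma}-\boldsymbol{\sigma}_h,\widetilde{\boldsymbol{u}}-Q_h\widetilde{\boldsymbol{u}})$. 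The only cosmetic difference is that the paper invokes the error equation \eqref{eq:error} with $\boldsymbol{v}_h=Q_h\widetilde{\boldsymbol{u}}$ to obtain the orthogonality, whereas you use $\div(\boldsymbol{\sigma}-\boldsymbol{\sigma}_h)=-(I-Q_h)\boldsymbol{f}$ and the $L^2$-projection property of $Q_h$ directly; the resulting bounds coincide.
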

\begin{proof}
By the second equation in \eqref{linearelasdual},
\eqref{eq:Pihbcommutatediv}, and \eqref{eq:error1} with $\boldsymbol{\tau}_h=\Pi_h\widetilde{\boldsymbol{\sigma}}$ and $\boldsymbol{v}_h=0$, we have
\[
\|Q_h \boldsymbol{u}-\boldsymbol{u}_h\|_0^2=b(\widetilde{\boldsymbol{\sigma}},Q_h \boldsymbol{u}-\boldsymbol{u}_h)=b(\Pi_h\widetilde{\boldsymbol{\sigma}},Q_h \boldsymbol{u}-\boldsymbol{u}_h)=-a(\boldsymbol{\sigma}-\boldsymbol{\sigma}_h, \Pi_h\widetilde{\boldsymbol{\sigma}}).
\]
Apply the first equation in \eqref{linearelasdual} and
\eqref{eq:error} with $\boldsymbol{\tau}_h=0$ and $\boldsymbol{v}_h=Q_h\widetilde{\boldsymbol{u}}$ to get
\begin{align*}	
\|Q_h \boldsymbol{u}-\boldsymbol{u}_h\|_0^2&=a(\boldsymbol{\sigma}-\boldsymbol{\sigma}_h, \widetilde{\boldsymbol{\sigma}}-\Pi_h\widetilde{\boldsymbol{\sigma}})-a(\boldsymbol{\sigma}-\boldsymbol{\sigma}_h, \widetilde{\boldsymbol{\sigma}}) \\
&=a(\boldsymbol{\sigma}-\boldsymbol{\sigma}_h, \widetilde{\boldsymbol{\sigma}}-\Pi_h\widetilde{\boldsymbol{\sigma}})+b(\boldsymbol{\sigma}-\boldsymbol{\sigma}_h, \widetilde{\boldsymbol{u}}) \\
&=a(\boldsymbol{\sigma}-\boldsymbol{\sigma}_h, \widetilde{\boldsymbol{\sigma}}-\Pi_h\widetilde{\boldsymbol{\sigma}})+b(\boldsymbol{\sigma}-\boldsymbol{\sigma}_h, \widetilde{\boldsymbol{u}}-Q_h\widetilde{\boldsymbol{u}}),
\end{align*}
which combined with the Cauchy-Schwarz inequality,
\eqref{eq:Piherrorestimate}, the error estimate of $Q_h$, and the regularity \eqref{eq:u0regularity} yields
$$
\|Q_h \boldsymbol{u}-\boldsymbol{u}_h\|_0\lesssim h\|\boldsymbol{\sigma}-\boldsymbol{\sigma}_h\|_0 + h^2\|\div(\boldsymbol{\sigma}-\boldsymbol{\sigma}_h)\|_0.
$$
Therefore, the estimate \eqref{errorQhuuL2} holds from the estimates \eqref{eq:errorestimate} and \eqref{erorestimatesigmahdiv}.
\end{proof}
	
The superconvergent estimate of $\|Q_h \boldsymbol{u}-\boldsymbol{u}_h\|_{0,h}$ in \eqref{errorQhuuL2} is also two-order higher than the optimal one.

\subsection{A reduced mixed finite element method}
We can also use the spaces $\boldsymbol{\Sigma}^r_{h}$ and $\boldsymbol{V}^r_h$ to discretize the solution of the mixed formulation~\eqref{HJ1}-\eqref{HJ2}. The resulting reduced mixed finite element method is to find $(\boldsymbol{\sigma}_h, \boldsymbol{u}_h) \in \boldsymbol{\Sigma}^r_{h} \times \boldsymbol{V}^r_h $
	such that
	\begin{align}
		\label{reduceddistype1}	a(\boldsymbol{\sigma}_{h}, \boldsymbol{\tau}_{h})+b(\boldsymbol{\tau}_{h},\boldsymbol{u}_h) &=0 \quad\quad\quad\quad\; \forall~\boldsymbol{\tau}_{h} \in \boldsymbol{\Sigma}^r_{h}, \\
		\label{reduceddistype2}	b(\boldsymbol{\sigma}_{h},
		\boldsymbol{v}_h) &=-(\boldsymbol{f}, \boldsymbol{v}_h) \quad \forall~\boldsymbol{v}_{h} \in \boldsymbol{V}^r_h.
	\end{align}
Applying the argument as in Section~\ref{sec32} and Section~\ref{sec33}, the mixed finite element method~\eqref{reduceddistype1}-\eqref{reduceddistype2} is well-posed, and possesses the following optimal convergence.

\begin{theorem}\label{thm:reducedmfemerror}
Let $(\boldsymbol{\sigma}, \boldsymbol{u})$ and $(\boldsymbol{\sigma}_h, \boldsymbol{u}_h)$ be the solution of the mixed formulation~\eqref{HJ1}-\eqref{HJ2} and the mixed finite element method \eqref{reduceddistype1}-\eqref{reduceddistype2} respectively. Assume $\boldsymbol{\sigma}\in{H}^2(\Omega;\mathbb{S})$ and $\boldsymbol{u}\in {H}^1(\Omega;\mathbb R^d)$. Then
\begin{align*}
\quad\|\boldsymbol{\sigma}-\boldsymbol{\sigma}_h\|_{0,h}+\|Q_h^r\boldsymbol{u}-\boldsymbol{u}_h\|_{0} + |Q_h^r \boldsymbol{u}-\boldsymbol{u}_h|_{1,h}\lesssim h^2\|\boldsymbol{\sigma}\|_{2},\\
\|\boldsymbol{\sigma}-\boldsymbol{\sigma}_h\|_{{H}(\div)}\lesssim h\|\boldsymbol{\sigma}\|_{2},\\
\|\boldsymbol{u}-\boldsymbol{u}_h\|_{0}\lesssim h(h\|\boldsymbol{\sigma}\|_{2}+\|\boldsymbol{u}\|_1),
\end{align*}
where $Q_h^r$ is the $L^2$ orthogonal projector from ${L}^2(\Omega; \mathbb{R}^2)$ onto $\boldsymbol{V}_h^r$. All the hidden constants in these inequalities are independent of $h$ and the parameter $\lambda$.
\end{theorem}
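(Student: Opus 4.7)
The plan is to mirror the analysis of Sections~\ref{sec32}-\ref{sec33} for the reduced pair $(\boldsymbol{\Sigma}_h^r,\boldsymbol{V}_h^r)$. The ingredients needed are: (i) an interpolation operator $\Pi_h^r:H^1(\Omega;\mathbb S)\to\boldsymbol{\Sigma}_h^r$ that commutes with $\div$ through $Q_h^r$ and delivers an $O(h^2)$ error bound; (ii) two discrete inf-sup conditions for $b(\cdot,\cdot)$ on $\boldsymbol{\Sigma}_h^r\times\boldsymbol{V}_h^r$, in the $H(\div)$- and mesh-dependent norms; (iii) the coercivity~\eqref{discretecoercivity} restricted to the divergence-free subspace of $\boldsymbol{\Sigma}_h^r$, which is free because $\boldsymbol{\Sigma}_h^r\subset\boldsymbol{\Sigma}_h\subset H(\div,\Omega;\mathbb S)$. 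The Babu\v{s}ka-Brezzi theory then yields $\lambda$-independent stability, and a Strang-type error equation concludes.

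I would build $\Pi_h^r:=I_h^r$ by the same averaging recipe as $I_h$ but based on the reduced DoFs~\eqref{reducedivShddof}. The decisive observation is that no bubble correction of the type $I_h^b$ is needed, because every $\boldsymbol{\tau}\in\boldsymbol{\Sigma}^r(K)$ already satisfies $\div\boldsymbol{\tau}\in\mathbf{RM}$. To check the commuting property $\div\Pi_h^r=Q_h^r\div$ on each $K\in\mathcal T_h$, integrate by parts against an arbitrary $\boldsymbol{v}\in\mathbf{RM}$: since $\boldsymbol{\varepsilon}(\boldsymbol{v})=0$,
\[
(\div(\boldsymbol{\tau}-\Pi_h^r\boldsymbol{\tau}),\boldsymbol{v})_K=\sum_{F\in\partial K}((\boldsymbol{\tau}-\Pi_h^r\boldsymbol{\tau})\boldsymbol{n},\boldsymbol{v})_F.
\]
For $\boldsymbol{v}\in\mathbf{RM}$ one has $\boldsymbol{v}\cdot\boldsymbol{n}|_F\in\mathbb P_1(F)$ (the skew part contributes a linear tangential expression, the normal-normal piece vanishes) and $\Pi_F\boldsymbol{v}|_F\in\mathrm{ND}_0(F)$ by Lemma~\ref{lem:PiFRM}, so the right-hand side vanishes by DoFs~\eqref{reducedivShddof3}-\eqref{reducedivShddof4}, giving $\div\Pi_h^r\boldsymbol{\tau}=Q_h^r\div\boldsymbol{\tau}$. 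The approximation estimate $\sum_i h_K^{2i-2s}|\boldsymbol{\tau}-\Pi_h^r\boldsymbol{\tau}|_{i,K}^2\lesssim|\boldsymbol{\tau}|_s^2$ for $1\le s\le 2$ follows from $\mathbb P_1(K;\mathbb S)\subset\boldsymbol{\Sigma}^r(K)$ together with the standard scaling/averaging argument; the ceiling $s=2$ is dictated by $\mathbb P_2(K;\mathbb S)\not\subset\boldsymbol{\Sigma}^r(K)$.

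For the $H(\div)$-inf-sup, given $\boldsymbol{v}_h\in\boldsymbol{V}_h^r$ I lift it to $\boldsymbol{\tau}\in H^1(\Omega;\mathbb S)$ with $\div\boldsymbol{\tau}=\boldsymbol{v}_h$ and $\|\boldsymbol{\tau}\|_1\lesssim\|\boldsymbol{v}_h\|_0$ by the Arnold-Hu regular decomposition, and then use $\Pi_h^r\boldsymbol{\tau}$, which satisfies $\div\Pi_h^r\boldsymbol{\tau}=Q_h^r\boldsymbol{v}_h=\boldsymbol{v}_h$. The mesh-dependent inf-sup is actually \emph{simpler} than its full-space counterpart: since $\boldsymbol{v}_h|_K\in\mathbf{RM}$, one has $\boldsymbol{\varepsilon}_h(\boldsymbol{v}_h)=0$, $\llbracket\boldsymbol{v}_h\cdot\boldsymbol{n}\rrbracket\in\mathbb P_1(F)$ and $\llbracket\Pi_F\boldsymbol{v}_h\rrbracket\in\mathrm{ND}_0(F)$ automatically, so the analogues of $\boldsymbol{\tau}_1,\boldsymbol{\tau}_2\in\boldsymbol{\Sigma}_h^r$ can be defined by prescribing only the reduced DoFs~\eqref{reducedivShddof3}-\eqref{reducedivShddof4} (no need for DoF~\eqref{lowdivShddof5} since $\boldsymbol{\varepsilon}_h(\boldsymbol{v}_h)$ vanishes), and the $Q_F^{\mathrm{ND}}$-projection term that complicated the earlier proof disappears. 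A direct computation then yields $b(\boldsymbol{\tau}_1+\boldsymbol{\tau}_2,\boldsymbol{v}_h)\gtrsim|\boldsymbol{v}_h|_{1,h}^2$ with $\|\boldsymbol{\tau}_1+\boldsymbol{\tau}_2\|_{0,h}\lesssim|\boldsymbol{v}_h|_{1,h}$.

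With stability established, I choose $\widetilde{\boldsymbol{\sigma}}_h=\Pi_h^r\boldsymbol{\sigma}-\boldsymbol{\sigma}_h$ and $\widetilde{\boldsymbol{u}}_h=Q_h^r\boldsymbol{u}-\boldsymbol{u}_h$, use the error equation together with the $\boldsymbol{V}_h^r$-analogue of~\eqref{buortho} (which holds because $\div\boldsymbol{\Sigma}_h^r\subset\boldsymbol{V}_h^r$) and the commuting identity $\div\Pi_h^r\boldsymbol{\sigma}=-Q_h^r\boldsymbol{f}=\div\boldsymbol{\sigma}_h$, to reduce the right-hand side of the stability estimates to $|a(\Pi_h^r\boldsymbol{\sigma}-\boldsymbol{\sigma},\boldsymbol{\tau}_h)|\lesssim\|\boldsymbol{\sigma}-\Pi_h^r\boldsymbol{\sigma}\|_0\|\boldsymbol{\tau}_h\|_{\cdot}$. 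The triangle inequality combined with the interpolation bound then gives the first two estimates, while $\|\boldsymbol{u}-\boldsymbol{u}_h\|_0\le\|\boldsymbol{u}-Q_h^r\boldsymbol{u}\|_0+\|Q_h^r\boldsymbol{u}-\boldsymbol{u}_h\|_0\lesssim h\|\boldsymbol{u}\|_1+h^2\|\boldsymbol{\sigma}\|_2$ delivers the third. I expect the main subtlety to be the commuting diagram for $\Pi_h^r$ in the absence of an interior DoF and of any bubble correction; the observation that rigid motions have linear normal components and Nédélec tangential traces on each face is what makes this work.
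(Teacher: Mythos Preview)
Your proposal is correct and follows exactly the route the paper indicates (``Applying the argument as in Section~\ref{sec32} and Section~\ref{sec33}''), which is not spelled out further in the paper itself. Your two observations---that no bubble correction $I_h^b$ is needed because testing against $\mathbf{RM}$ already exhausts $\boldsymbol{V}_h^r$, and that the mesh-dependent inf-sup simplifies because $\boldsymbol{\varepsilon}_h(\boldsymbol{v}_h)=0$ and the jumps automatically lie in $\mathbb P_1(F)\times\mathrm{ND}_0(F)$---are precisely the simplifications one finds when carrying out the details, and they are accurate.
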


\begin{remark}\rm
Following the proof of Theorem~\ref{thm:errorQhuuL2}, we get
\begin{align*}	
\|Q_h^r\boldsymbol{u}-\boldsymbol{u}_h\|_0^2&\lesssim \|\boldsymbol{\sigma}-\boldsymbol{\sigma}_h\|_0\|\widetilde{\boldsymbol{\sigma}}-\Pi_h\widetilde{\boldsymbol{\sigma}}\|_0 + \|\div(\boldsymbol{\sigma}-\boldsymbol{\sigma}_h)\|_0\|\widetilde{\boldsymbol{u}}-Q_h^r\widetilde{\boldsymbol{u}}\|_0 \\
&\lesssim h^3|\boldsymbol{\sigma}-\boldsymbol{\sigma}_h|_2|\widetilde{\boldsymbol{\sigma}}|_1 + h|\div\boldsymbol{\sigma}|_1\|\widetilde{\boldsymbol{u}}-Q_h^r\widetilde{\boldsymbol{u}}\|_0.
\end{align*}
Notice that $\|\widetilde{\boldsymbol{u}}-Q_h^r\widetilde{\boldsymbol{u}}\|_0\lesssim h|\widetilde{\boldsymbol{u}}|_1$ rather than $\|\widetilde{\boldsymbol{u}}-Q_h^r\widetilde{\boldsymbol{u}}\|_0\lesssim h^2|\widetilde{\boldsymbol{u}}|_2$. So we only have a second order convergence rate for $\|Q_h^r\boldsymbol{u}-\boldsymbol{u}_h\|_0$, which is sharp and numerically verified in Section~\ref{sec:numresult}.
\end{remark}

\section{Numerical results}\label{sec:numresult}
	In this section, we will numerically test the mixed finite element methods \eqref{distype1}-\eqref{distype2} and \eqref{reduceddistype1}-\eqref{reduceddistype2} on the unit square $\Omega=(0,1)\times(0,1)$. 
	Set the Lam\'{e} coefficient $\mu=1$. Let the exact
solution 
	$$
	\boldsymbol{u}=\begin{pmatrix}
		\pi \sin^2(\pi x)\sin(\pi y)\cos(\pi y)\\
		-\pi \sin(\pi x)\cos(\pi x)\sin^2(\pi y)\\
	\end{pmatrix}.
	$$
	The load function $\boldsymbol{f}$ is analytically computed from problem \eqref{linearelasticity}.
Uniform grids with different mesh sizes are adopted in the numerical experiments.

Numerical results of errors $\|\boldsymbol{\sigma}-\boldsymbol{\sigma}_h\|_0$, $\|Q_h\boldsymbol{u}-\boldsymbol{u}_h\|_{0}$ and $|Q_h\boldsymbol{u}-\boldsymbol{u}_h|_{1,h}$ of the mixed finite element methods \eqref{distype1}-\eqref{distype2}
with respect to $h$ for $\lambda= 1, 1000, 10^6, \infty$ are shown in Tables~\ref{table:lambda1}-\ref{table:lambdainfty}. From Tables~\ref{table:lambda1}-\ref{table:lambdainfty} we can see that 
$\|\boldsymbol{\sigma}-\boldsymbol{\sigma}_h\|_0=\mathcal{O}(h^3)$, $\|Q_h\boldsymbol{u}-\boldsymbol{u}_h\|_{0}=\mathcal{O}(h^4)$ and $|Q_h\boldsymbol{u}-\boldsymbol{u}_h|_{1,h}=\mathcal{O}(h^3)$ numerically, which agree with the theoretical estimates \eqref{eq:errorestimate} and \eqref{errorQhuuL2}, respectively.
\begin{table}[ht]
			\setlength{\abovecaptionskip}{0.cm}
		\setlength{\belowcaptionskip}{-0.cm}
		\centering
		\caption{Errors $\|\boldsymbol{\boldsymbol{\sigma}}-\boldsymbol{\boldsymbol{\sigma}}_h\|_0$, $\|Q_h\boldsymbol{u}-\boldsymbol{u}_h\|_{0}$ and $|Q_h\boldsymbol{u}-\boldsymbol{u}_h|_{1,h}$ of the mixed finite element method (\ref{distype1})-(\ref{distype2}) for $\lambda=1$.}
\label{table:lambda1}
		\begin{tabular}[t]{ccccccc}
			\toprule
			$h$ & $\|\boldsymbol{\boldsymbol{\sigma}}-\boldsymbol{\boldsymbol{\sigma}}_h\|_0$ & order & $\|Q_h\boldsymbol{u}-\boldsymbol{u}_h\|_{0}$ & order & $|Q_h\boldsymbol{u}-\boldsymbol{u}_h|_{1,h}$ & order \\
			\midrule 
$2^{-1}$ & 2.3008E+00 & $-$ & 1.0529E-01 & $-$ & 5.5880E-01 & $-$ \\
$2^{-2}$ & 4.4165E-01 & 2.38 & 1.3024E-02 & 3.02 & 1.1766E-01 & 2.25 \\
$2^{-3}$ & 7.5474E-02 & 2.55 & 1.1380E-03 & 3.52 & 2.6621E-02 & 2.14 \\
$2^{-4}$ & 1.1379E-02 & 2.73 & 8.5164E-05 & 3.74 & 4.5965E-03 & 2.53 \\
$2^{-5}$ & 1.5375E-03 & 2.89 & 5.7458E-06 & 3.89 & 6.5103E-04 & 2.82 \\
$2^{-6}$ & 1.9794E-04 & 2.96 & 3.6940E-07 & 3.96 & 8.5131E-05 & 2.93 \\
$2^{-7}$ & 2.5034E-05 & 2.98 & 2.3343E-08 & 3.98 & 1.0826E-05 & 2.98 \\
			\bottomrule
		\end{tabular}
\end{table}
\begin{table}[ht]
			\setlength{\abovecaptionskip}{0.cm}
		\setlength{\belowcaptionskip}{-0.cm}
		\centering
		\caption{Errors $\|\boldsymbol{\boldsymbol{\sigma}}-\boldsymbol{\boldsymbol{\sigma}}_h\|_0$, $\|Q_h\boldsymbol{u}-\boldsymbol{u}_h\|_{0}$ and $|Q_h\boldsymbol{u}-\boldsymbol{u}_h|_{1,h}$ of the mixed finite element method (\ref{distype1})-(\ref{distype2}) for $\lambda=1000$.}
\label{table:lambda1000}
		\begin{tabular}[t]{ccccccc}
			\toprule
			$h$ & $\|\boldsymbol{\sigma}-\boldsymbol{\sigma}_h\|_0$ & order & $\|Q_h\boldsymbol{u}-\boldsymbol{u}_h\|_{0}$ & order & $|Q_h\boldsymbol{u}-\boldsymbol{u}_h|_{1,h}$ & order \\
			\midrule 
$2^{-1}$ & 2.3395E+00 & $-$ & 1.0077E-01 & $-$ & 4.6232E-01 & $-$ \\
$2^{-2}$ & 4.5723E-01 & 2.36 & 1.2815E-02 & 2.98 & 9.3933E-02 & 2.30 \\
$2^{-3}$ & 7.6644E-02 & 2.58 & 1.0647E-03 & 3.59 & 1.8562E-02 & 2.34 \\
$2^{-4}$ & 1.1460E-02 & 2.74 & 7.5377E-05 & 3.82 & 3.1191E-03 & 2.57 \\
$2^{-5}$ & 1.5444E-03 & 2.89 & 4.9424E-06 & 3.93 & 4.3713E-04 & 2.83 \\
$2^{-6}$ & 1.9866E-04 & 2.96 & 3.1410E-07 & 3.98 & 5.6902E-05 & 2.94 \\
$2^{-7}$ & 2.5117E-05 & 2.98 & 1.9740E-08 & 3.99 & 7.2213E-06 & 2.98 \\
			\bottomrule
		\end{tabular}
\end{table}
\begin{table}[ht]
			\setlength{\abovecaptionskip}{0.cm}
		\setlength{\belowcaptionskip}{-0.cm}
		\centering
		\caption{Errors $\|\boldsymbol{\boldsymbol{\sigma}}-\boldsymbol{\boldsymbol{\sigma}}_h\|_0$, $\|Q_h\boldsymbol{u}-\boldsymbol{u}_h\|_{0}$ and $|Q_h\boldsymbol{u}-\boldsymbol{u}_h|_{1,h}$ of the mixed finite element method (\ref{distype1})-(\ref{distype2}) for $\lambda=10^6$.}
\label{table:lambda106}
		\begin{tabular}[t]{ccccccc}
			\toprule
			$h$ & $\|\boldsymbol{\sigma}-\boldsymbol{\sigma}_h\|_0$ & order & $\|Q_h\boldsymbol{u}-\boldsymbol{u}_h\|_{0}$ & order & $|Q_h\boldsymbol{u}-\boldsymbol{u}_h|_{1,h}$ & order \\
			\midrule 
$2^{-1}$ & 2.3397E+00 & $-$ & 1.0076E-01 & $-$ & 4.6222E-01 & $-$ \\
$2^{-2}$ & 4.5731E-01 & 2.36 & 1.2816E-02 & 2.97 & 9.3911E-02 & 2.30 \\
$2^{-3}$ & 7.6649E-02 & 2.58 & 1.0647E-03 & 3.59 & 1.8549E-02 & 2.34 \\
$2^{-4}$ & 1.1461E-02 & 2.74 & 7.5367E-05 & 3.82 & 3.1165E-03 & 2.57 \\
$2^{-5}$ & 1.5444E-03 & 2.89 & 4.9415E-06 & 3.93 & 4.3677E-04 & 2.84 \\
$2^{-6}$ & 1.9866E-04 & 2.96 & 3.1404E-07 & 3.98 & 5.6854E-05 & 2.94 \\
$2^{-7}$ & 2.5117E-05 & 2.98 & 1.9735E-08 & 3.99 & 7.2151E-06 & 2.98 \\
			\bottomrule
		\end{tabular}
\end{table}
\begin{table}[htbp]
		\setlength{\abovecaptionskip}{0.cm}
		\setlength{\belowcaptionskip}{-0.cm}
		\centering
		\caption{Errors $\|\boldsymbol{\boldsymbol{\sigma}}-\boldsymbol{\boldsymbol{\sigma}}_h\|_0$, $\|Q_h\boldsymbol{u}-\boldsymbol{u}_h\|_{0}$ and $|Q_h\boldsymbol{u}-\boldsymbol{u}_h|_{1,h}$ of the mixed finite element method (\ref{distype1})-(\ref{distype2}) for $\lambda=\infty$.}
\label{table:lambdainfty}
		\begin{tabular}[t]{ccccccc}
			\toprule
			$h$ & $\|\boldsymbol{\sigma}-\boldsymbol{\sigma}_h\|_0$ & order & $\|Q_h\boldsymbol{u}-\boldsymbol{u}_h\|_{0}$ & order & $|Q_h\boldsymbol{u}-\boldsymbol{u}_h|_{1,h}$ & order \\
			\midrule 
$2^{-1}$ & 2.3397E+00 & $-$ & 1.0076E-01 & $-$ & 4.6222E-01 & $-$ \\
$2^{-2}$ & 4.5731E-01 & 2.36 & 1.2816E-02 & 2.97 & 9.3911E-02 & 2.30 \\
$2^{-3}$ & 7.6649E-02 & 2.58 & 1.0647E-03 & 3.59 & 1.8549E-02 & 2.34 \\
$2^{-4}$ & 1.1461E-02 & 2.74 & 7.5367E-05 & 3.82 & 3.1165E-03 & 2.57 \\
$2^{-5}$ & 1.5444E-03 & 2.89 & 4.9415E-06 & 3.93 & 4.3677E-04 & 2.84 \\
$2^{-6}$ & 1.9866E-04 & 2.96 & 3.1404E-07 & 3.98 & 5.6854E-05 & 2.94 \\
$2^{-7}$ & 2.5117E-05 & 2.98 & 1.9735E-08 & 3.99 & 7.2151E-06 & 2.98 \\
			\bottomrule
		\end{tabular}
\end{table}

Numerical results of errors $\|\boldsymbol{\sigma}-\boldsymbol{\sigma}_h\|_0$, $\|Q_h\boldsymbol{u}-\boldsymbol{u}_h\|_{0}$ and $|Q_h\boldsymbol{u}-\boldsymbol{u}_h|_{1,h}$ of the mixed finite element methods \eqref{reduceddistype1}-\eqref{reduceddistype2}
with respect to $h$ for $\lambda= 1, 1000, 10^6, \infty$ are shown in Tables~\ref{table:reducedlambda1}-\ref{table:reducedlambdainfty}. From Tables~\ref{table:reducedlambda1}-\ref{table:reducedlambdainfty} we can see that 
$\|\boldsymbol{\sigma}-\boldsymbol{\sigma}_h\|_0=\mathcal{O}(h^2)$, $\|Q_h\boldsymbol{u}-\boldsymbol{u}_h\|_{0}=\mathcal{O}(h^2)$ and $|Q_h\boldsymbol{u}-\boldsymbol{u}_h|_{1,h}=\mathcal{O}(h^2)$ numerically, which agree with the theoretical estimates in Theorem~\ref{thm:reducedmfemerror}.
\begin{table}[ht]
			\setlength{\abovecaptionskip}{0.cm}
		\setlength{\belowcaptionskip}{-0.cm}
		\centering
		\caption{Errors $\|\boldsymbol{\boldsymbol{\sigma}}-\boldsymbol{\boldsymbol{\sigma}}_h\|_0$, $\|Q_h^r\boldsymbol{u}-\boldsymbol{u}_h\|_{0}$ and $|Q_h^r\boldsymbol{u}-\boldsymbol{u}_h|_{1,h}$ of the reduced mixed finite element method (\ref{reduceddistype1})-(\ref{reduceddistype2}) for $\lambda=1$.}
\label{table:reducedlambda1}
		\begin{tabular}[t]{ccccccc}
			\toprule
			$h$ & $\|\boldsymbol{\boldsymbol{\sigma}}-\boldsymbol{\boldsymbol{\sigma}}_h\|_0$ & order & $\|Q_h^r\boldsymbol{u}-\boldsymbol{u}_h\|_{0}$ & order & $|Q_h^r\boldsymbol{u}-\boldsymbol{u}_h|_{1,h}$ & order \\
			\midrule 
$2^{-1}$ & 3.0150E+00 & $-$ & 2.0788E-01 & $-$ & 8.5503E-01 & $-$ \\
$2^{-2}$ & 1.0545E+00 & 1.52 & 7.8315E-02 & 1.41 & 3.3356E-01 & 1.36 \\
$2^{-3}$ & 2.6116E-01 & 2.01 & 2.0301E-02 & 1.95 & 8.4282E-02 & 1.98 \\
$2^{-4}$ & 6.4955E-02 & 2.01 & 5.1084E-03 & 1.99 & 2.1035E-02 & 2.00 \\
$2^{-5}$ & 1.6213E-02 & 2.00 & 1.2789E-03 & 2.00 & 5.2619E-03 & 2.00 \\
$2^{-6}$ & 4.0521E-03 & 2.00 & 3.1984E-04 & 2.00 & 1.3166E-03 & 2.00 \\
$2^{-7}$ & 1.0130E-03 & 2.00 & 7.9968E-05 & 2.00 & 3.2933E-04 & 2.00 \\
			\bottomrule
		\end{tabular}
\end{table}
\begin{table}[ht]
			\setlength{\abovecaptionskip}{0.cm}
		\setlength{\belowcaptionskip}{-0.cm}
		\centering
		\caption{Errors $\|\boldsymbol{\boldsymbol{\sigma}}-\boldsymbol{\boldsymbol{\sigma}}_h\|_0$, $\|Q_h^r\boldsymbol{u}-\boldsymbol{u}_h\|_{0}$ and $|Q_h^r\boldsymbol{u}-\boldsymbol{u}_h|_{1,h}$ of the reduced mixed finite element method (\ref{reduceddistype1})-(\ref{reduceddistype2}) for $\lambda=1000$.}
\label{table:reducedlambda1000}
		\begin{tabular}[t]{ccccccc}
			\toprule
			$h$ & $\|\boldsymbol{\sigma}-\boldsymbol{\sigma}_h\|_0$ & order & $\|Q_h^r\boldsymbol{u}-\boldsymbol{u}_h\|_{0}$ & order & $|Q_h^r\boldsymbol{u}-\boldsymbol{u}_h|_{1,h}$ & order \\
			\midrule 
$2^{-1}$ & 3.0554E+00 & $-$ & 2.1197E-01 & $-$ & 8.5721E-01 & $-$ \\
$2^{-2}$ & 1.1000E+00 & 1.47 & 7.9081E-02 & 1.42 & 3.3510E-01 & 1.36 \\
$2^{-3}$ & 2.7584E-01 & 2.00 & 2.0429E-02 & 1.95 & 8.3554E-02 & 2.00 \\
$2^{-4}$ & 6.9016E-02 & 2.00 & 5.1319E-03 & 1.99 & 2.0735E-02 & 2.01 \\
$2^{-5}$ & 1.7263E-02 & 2.00 & 1.2842E-03 & 2.00 & 5.1763E-03 & 2.00 \\
$2^{-6}$ & 4.3170E-03 & 2.00 & 3.2111E-04 & 2.00 & 1.2940E-03 & 2.00 \\
$2^{-7}$ & 1.0794E-03 & 2.00 & 8.0281E-05 & 2.00 & 3.2355E-04 & 2.00 \\
			\bottomrule
		\end{tabular}
\end{table}
\begin{table}[ht]
			\setlength{\abovecaptionskip}{0.cm}
		\setlength{\belowcaptionskip}{-0.cm}
		\centering
		\caption{Errors $\|\boldsymbol{\boldsymbol{\sigma}}-\boldsymbol{\boldsymbol{\sigma}}_h\|_0$, $\|Q_h^r\boldsymbol{u}-\boldsymbol{u}_h\|_{0}$ and $|Q_h^r\boldsymbol{u}-\boldsymbol{u}_h|_{1,h}$ of the reduced mixed finite element method (\ref{reduceddistype1})-(\ref{reduceddistype2}) for $\lambda=10^6$.}
\label{table:reducedlambda106}
		\begin{tabular}[t]{ccccccc}
			\toprule
			$h$ & $\|\boldsymbol{\sigma}-\boldsymbol{\sigma}_h\|_0$ & order & $\|Q_h^r\boldsymbol{u}-\boldsymbol{u}_h\|_{0}$ & order & $|Q_h^r\boldsymbol{u}-\boldsymbol{u}_h|_{1,h}$ & order \\
			\midrule 
$2^{-1}$ & 3.0556E+00 & $-$ & 2.1199E-01 & $-$ & 8.5722E-01 & $-$ \\
$2^{-2}$ & 1.1002E+00 & 1.47 & 7.9084E-02 & 1.42 & 3.3511E-01 & 1.36 \\
$2^{-3}$ & 2.7591E-01 & 2.00 & 2.0430E-02 & 1.95 & 8.3554E-02 & 2.00 \\
$2^{-4}$ & 6.9035E-02 & 2.00 & 5.1321E-03 & 1.99 & 2.0735E-02 & 2.01 \\
$2^{-5}$ & 1.7268E-02 & 2.00 & 1.2842E-03 & 2.00 & 5.1762E-03 & 2.00 \\
$2^{-6}$ & 4.3182E-03 & 2.00 & 3.2112E-04 & 2.00 & 1.2940E-03 & 2.00 \\
$2^{-7}$ & 1.0797E-03 & 2.00 & 8.0284E-05 & 2.00 & 3.2354E-04 & 2.00 \\
			\bottomrule
		\end{tabular}
\end{table}
\begin{table}[htbp]
		\setlength{\abovecaptionskip}{0.cm}
		\setlength{\belowcaptionskip}{-0.cm}
		\centering
		\caption{Errors $\|\boldsymbol{\boldsymbol{\sigma}}-\boldsymbol{\boldsymbol{\sigma}}_h\|_0$, $\|Q_h^r\boldsymbol{u}-\boldsymbol{u}_h\|_{0}$ and $|Q_h^r\boldsymbol{u}-\boldsymbol{u}_h|_{1,h}$ of the reduced mixed finite element method (\ref{reduceddistype1})-(\ref{reduceddistype2}) for $\lambda=\infty$.}
\label{table:reducedlambdainfty}
		\begin{tabular}[t]{ccccccc}
			\toprule
			$h$ & $\|\boldsymbol{\sigma}-\boldsymbol{\sigma}_h\|_0$ & order & $\|Q_h^r\boldsymbol{u}-\boldsymbol{u}_h\|_{0}$ & order & $|Q_h^r\boldsymbol{u}-\boldsymbol{u}_h|_{1,h}$ & order \\
			\midrule 
$2^{-1}$ & 3.0556E+00 & $-$ & 2.1199E-01 & $-$ & 8.5722E-01 & $-$ \\
$2^{-2}$ & 1.1002E+00 & 1.47 & 7.9084E-02 & 1.42 & 3.3511E-01 & 1.36 \\
$2^{-3}$ & 2.7591E-01 & 2.00 & 2.0430E-02 & 1.95 & 8.3554E-02 & 2.00 \\
$2^{-4}$ & 6.9035E-02 & 2.00 & 5.1321E-03 & 1.99 & 2.0735E-02 & 2.01 \\
$2^{-5}$ & 1.7268E-02 & 2.00 & 1.2842E-03 & 2.00 & 5.1762E-03 & 2.00 \\
$2^{-6}$ & 4.3182E-03 & 2.00 & 3.2112E-04 & 2.00 & 1.2940E-03 & 2.00 \\
$2^{-7}$ & 1.0797E-03 & 2.00 & 8.0284E-05 & 2.00 & 3.2354E-04 & 2.00 \\
			\bottomrule
		\end{tabular}
\end{table}

		\section*{Conflict of interest} The authors declare that they have no conflict of interest.
\bibliographystyle{abbrv}
\bibliography{./refs}
\end{document}